\numberwithin{equation}{section}
\newtheorem{theorem}{Theorem}[section]
\newtheorem{lemma}[theorem]{Lemma}
\newtheorem{remark}{Remark}[section]
\providecommand{\keywords}[1]
{
  \small	
  \textbf{\textit{Keywords:}} #1
}
\newcommand{\MSC}[1]{%
  \small
  \textbf{\textit{Mathematics Subject Classification:}} #1
}
\title{Global boundedness, absorbing sets and mass persistence in two-dimensional chemotaxis–Navier–Stokes systems with weakly singular sensitivity and sub-logistic sources }
\author{ 
      Minh Le\thanks{Institute for Theoretical Sciences, Westlake University,  China \texttt{(leminh@westlake.edu.cn)}} 
      \and 
       Alexey Cheskidov\thanks{Institute for Theoretical Sciences, Westlake University,  China \texttt{(cheskidov@westlake.edu.cn)}} 
    }
\date{}
\begin{document}
\maketitle

\begin{abstract}
This paper studies the following chemotaxis–fluid system in a two-dimensional bounded domain $\Omega$:
\begin{equation*}
    \begin{cases}
        n_t + u \cdot \nabla n &= \Delta n -  \chi \nabla \cdot \left (n \frac{\nabla c}{c^k} \right ) + r n - \frac{\mu n^2}{\log^\eta(n+e)}, \\
        c_t + u \cdot \nabla c &= \Delta c - \alpha c + \beta n,\\
        u_t + u \cdot \nabla u &= \Delta u - \nabla P + n \nabla \phi + f,\\
        \nabla \cdot u &= 0,
    \end{cases}    
\end{equation*}
where $r, \mu, \alpha, \beta, \chi$ are positive parameters, $k, \eta \in (0,1)$, $\phi \in W^{2,\infty}(\Omega)$, and $f \in C^1\left(\bar{\Omega}\times [0, \infty)\right) \cap L^\infty\left(\Omega \times (0, \infty)\right)$. We show that, under suitable conditions on the initial data and with no-flux/no-flux/Dirichlet boundary conditions, this system admits a globally bounded classical solution. Furthermore, the system possesses an absorbing set in the topology of $C^0(\bar{\Omega}) \times W^{1, \infty}(\Omega) \times C^0(\bar{\Omega}; \mathbb{R}^2)$. Finally, we establish the persistence of the total mass of solutions, indicating that the population does not face extinction as a whole.

\end{abstract}
\keywords{Chemotaxis-fluid, sub-logistic source,  global boundedness, weak singular sensitivity}\\
\MSC{35B35, 35K45, 35K55, 92C15, 92C17}

\numberwithin{equation}{section}

\newtheorem{Corollary}{Corollary}[theorem]
\allowdisplaybreaks

\section{Introduction} \label{Intro}
Chemotaxis refers to the directed motility of cells or microorganisms in response to chemical gradients present within their environment. From a mathematical perspective, it represents a rich source of nonlinear phenomena and has therefore drawn considerable interest since the pioneering work of Keller and Segel in the 1970s, where the first mathematical model for chemotaxis was introduced in \cite{Keller}. This model, now widely known as the \emph{Keller--Segel system}, describes the interplay between cell density and chemical concentration through a system of nonlinear partial differential equations:
\begin{equation}\label{KS}
    \begin{cases}
        n_t = \Delta n - \nabla \cdot (n \nabla c), \\
        \tau c_t = \Delta c - c + n,
    \end{cases}
\end{equation}
where \( \tau \in \{0,1\} \). In this system, \( n(x,t) \) denotes the density of cells and \( c(x,t) \) the concentration of the chemical signal at position \( x \) and time \( t \). 

One of the most prominent features of the Keller--Segel system in two spatial dimensions is the \emph{critical mass phenomenon}. This refers to the existence of a threshold value \( m_c > 0 \) such that:
\begin{itemize}
    \item If the initial total mass \( \int_\Omega n(\cdot,t) < m_c \), the solution exists globally in time and remains bounded (see, e.g., \cite{Dolbeault, Dolbeault1, NSY}).
    \item Conversely, if \( \int_\Omega n(\cdot,t) > m_c \), then the solution may blow up in finite time (see, e.g., \cite{Nagai1, Nagai2, Nagai3, Nagai4}).
\end{itemize}

However, this delicate mass threshold behavior does not extend to higher dimensions. It has been shown in \cite{Winkler_2010, Winkler-2013} that in dimensions three and higher, the blow-up of solutions can occur regardless of how small the initial mass is. This indicates a fundamental change in the behavior of the system as the spatial dimension increases.

To mitigate or prevent such blow-up behavior, various modifications of the original model have been proposed. One notable approach involves incorporating \emph{logistic damping terms} into the population dynamics. For example, a cubic damping term of the form \( f(u) = u(1 - u)(u - a) \), with \( a \geq 0 \), was introduced and studied in \cite{MT96}, and later refined in \cite{KTTM}. A major advancement in this direction was made in \cite{Tello+Winkler}, where the authors demonstrated that a \emph{quadratic logistic source term} \( f(u) = ru - \mu u^2 \) is sufficient to ensure global existence and boundedness of solutions in arbitrary spatial dimensions, provided the damping coefficient \( \mu \) is sufficiently large. Since then, numerous studies have built upon these results, extending them in various directions: by weakening the assumptions, exploring more general systems, or introducing additional biological and physical effects. Although a comprehensive list of contributions is beyond the scope of this summary, we refer the interested reader to a selection of representative works, including \cite{Winkler-logistic, MW2022, Tian4, WMS, Minh2, Minh3}.

When the chemotactic drift term \(-\nabla\cdot(n\nabla c)\) is replaced by
\[
-\chi\nabla\cdot\left(\frac{n}{c^k}\nabla c\right),
\]
where \(\chi>0\) and \(k\in(0,1]\), the resulting model is referred to as a chemotaxis model with \emph{singular sensitivity}. This singular sensitivity function, particularly in the logarithmic case \(k=1\), originates from the Weber--Fechner law of stimulus perception and was first introduced in the seminal works \cite{Keller, Keller+Segel}. For the case \(k=1\), it was shown in \cite{Biler-1999} and \cite{Winkler+2011} that global-in-time solutions exist if \(\chi\leq1\) in two dimensions, or if \(\chi<\sqrt{2/N}\) in spatial dimension \(N\geq3\). The global boundedness of solutions under the same condition on \(\chi\) for \(N\geq3\) was later established in \cite{Fujie}. These findings naturally raise the question of determining the largest possible constant \(\chi_c\). In an attempt to address this, the author in \cite{Lankeit_2016} demonstrated that \(\chi_c>1.016\) in two dimensions, thereby indicating that the threshold \(\chi\leq1\) found in \cite{Biler-1999} is not sharp. In a further development, \cite{Fujie_2018} showed that if the chemical equation is written as
\[
\tau c_t=\Delta c-c+n,
\]
and \(\tau>0\) is sufficiently small, then global existence can be ensured under the condition \(\chi<\frac{N}{N-2}\) for \(N\geq3\). Motivated by this result, the authors conjectured that \(\chi_c=\frac{N}{N-2}\). Without requiring the smallness of \(\tau\), it was proven in \cite{Ahn+Kang+Lee} that \(\chi_c\geq\frac4N\) for \(N\geq2\). This further implies that the bound \(\chi<\sqrt{2/N}\) given in \cite{Winkler+2011} is not optimal when \(3\leq N<8\). In line with this, the recent work \cite{Le_2025} establishes that the bound \(\chi<\sqrt{2/N}\) is indeed not optimal for all dimensions \(N\geq3\).

Let us now shift our attention to the Keller-Segel system with weakly singular sensitivity ($k \in (0,1)$). It was shown in \cite{NaSe3} that when $N=2 $ then solutions to the parabolic-elliptic system are global and bounded given that initial conditions are radial symmetric while $N \geq 3$ finite-time blow-up solutions may occur for all $k>0$. This naturally leads to the question of whether the inclusion of a logistic source term of the form \( rn - \mu n^2 \) can prevent such blow-up behavior. In \cite{zh}, a positive answer was provided in two dimensions, under the condition that the logistic damping coefficient \( \mu \) is sufficiently large. This largeness condition on \( \mu \) was later removed in \cite{Minh6}, where it was further demonstrated that even a \emph{sub-logistic source} of the form 
\[
f(u) = rn - \frac{\mu n^2}{\log^\gamma(n + e)} \quad \text{with } \gamma \in (0,1)
\]
can prevent blow-up of solutions. In higher-dimensional domains, the global boundedness of solutions to the parabolic-elliptic system was established in \cite{Kurt} under the assumptions that \( k < \frac{1}{2} + \frac{2}{N} \) and \( \mu \) is sufficiently large. This result was subsequently improved in \cite{minh-kurt}, where it was shown that a quadratic logistic damping term \( -\mu u^2 \) with large enough \( \mu \) ensures the global boundedness of solutions for all \( k \in (0,1) \). Similar effects have been observed for the fully parabolic system as well. In \cite{minh-preprint-1}, it was shown that logistic damping prevents blow-up in two-dimensional domains, and in higher dimensions this was extended in \cite{minh-preprint-2}, provided that \( \mu \) is sufficiently large.

Within natural aquatic ecosystems, microorganisms are immersed in fluid environments, thereby rendering them susceptible to an assortment of physical phenomena, encompassing bouyancy effects, gravitational forces and fluid transport mechanisms. Among the earliest mathematical constructs formulated to characterize the coupling between chemotactic migration and ambient flow was the seminal contribution of \cite{Tuval2005}, which established a comprehensive model delineating the motility of aerobic bacteria confined within sessile droplets. In this paper, our objective is to examine the interrelation between chemotaxis with singular sensitivity and its interaction with the surrounding fluid. More precisely, we consider the following chemotaxis-Navier-Stokes system with weakly singular sensitivity, posed in a smoothly bounded domain $\Omega \subset \mathbb{R}^2$:
\begin{equation} \label{1}
    \begin{cases}
        n_t + u \cdot \nabla n = \Delta n - \chi \nabla \cdot \left ( \dfrac{n}{c^k} \nabla c \right ) + r n - \frac{\mu n^2}{\log^\eta(n+e)}, \qquad & x \in \Omega, \, t>0, \\ 
        c_t + u \cdot \nabla c = \Delta c - \alpha c + \beta n, \qquad  & x \in \Omega, \, t>0, \\
        u_t + u \cdot \nabla u = \Delta u - \nabla P + n \nabla \phi + f,\qquad  & x \in \Omega, \, t>0,\\
        \nabla \cdot u = 0, \qquad  & x \in \Omega, \, t>0,
    \end{cases}
\end{equation}
where $r, \mu, \alpha, \beta > 0$ are fixed positive parameters. In this system, $u = u(x,t)$ and $P = P(x,t)$ denote the fluid velocity and pressure, respectively, while $\phi = \phi(x)$ represents the gravitational potential, and $f = f(x,t)$ stands for the external force acting on the fluid.

\textbf{Main results.} The principal objective of this paper is to demonstrate that the inclusion of the logistic source ensures the existence of globally bounded classical solutions in the two-dimensional chemotaxis–fluid system with weak singular sensitivity. Furthermore, we establish the existence of an absorbing set for the system \eqref{1} in $C^{0}(\bar{\Omega}) \times W^{1, \infty}(\Omega) \times C^0(\Omega; \mathbb{R}^2)$-topology. To demonstrate the existence of a global attractor, one might typically employ bootstrapping arguments to obtain compactness properties of the absorbing set. This approach, however, requires a uniform positive lower bound for the chemical density function. Owing to the inherent complexity introduced by the fluid environment, such a lower bound remains unattainable in the present setting. Nevertheless, we can at least prove the persistence of the total mass of solutions, thereby ruling out global population extinction. Whereas this persistence yields a positive lower bound for the chemical density when the fluid is absent ($u \equiv 0$), such a lower bound cannot be obtained from mass persistence alone in the full fluid-coupled system.  
 
 Before presenting our main results, we begin by specifying the precise boundary and initial conditions associated with system \eqref{1}. The system is supplemented with the homogeneous Neumann boundary conditions for $n$ and $c$, and the no-slip condition for the fluid velocity $u$:
\begin{align} \label{bdry}
    \frac{\partial n}{ \partial \nu}= \frac{\partial c}{ \partial \nu} = 0, \quad \text{and } u = 0 \quad \text{for } x \in \partial \Omega \text{ and } t > 0,
\end{align}
as well as the following initial conditions:
\begin{align*} 
    n(x,0) = n_0(x), \quad c(x,0) = c_0(x), \quad u(x,0) = u_0(x), \quad x \in \Omega.
\end{align*}
Throughout this paper, we impose the following regularity assumptions on the initial data:
\begin{equation} \label{initial'}
    \begin{cases}
        n_0 \in C(\Bar{\Omega}), \quad n_0 \geq 0 \text{ in } \Bar{\Omega}, \quad \text{and } n_0 \not\equiv 0, \\
        c_0 \in W^{1, \infty}(\Omega), \quad c_0 > 0 \text{ in } \Bar{\Omega}, \\
        u_0 \in D(A^\gamma) \quad \text{for some } \gamma \in \left( \dfrac{1}{2}, 1 \right),
    \end{cases}
\end{equation}
where $A:= - \mathcal{P} \Delta $ denotes the Stokes operator in  {$L^2_\sigma(\Omega) := \overline{\left\{\varphi\in C^\infty_c(\Omega;\mathbb R^2) \,\middle|\, \nabla\cdot\varphi=0 \right\}
}^{\,L^2(\Omega;\mathbb R^2)},$ }with $ \mathcal{P}$ standing for the Helmholtz projection on $L^2(\Omega; \mathbb{R}^2)$. Furthermore, the given potential function $\phi$ and the source term $f$ in \eqref{1} satisfy
\begin{align} \label{phi}
   \phi \in W^{2, \infty}(\Omega),   
\end{align}
and 
\begin{align} \label{f}
    f \in C^1 \left ( \Bar{\Omega}\times [0, \infty) \right )\cap L^\infty \left ( \Omega \times (0,\infty) \right ).
\end{align}

Let us now articulate our first main result concerning the existence of global, bounded classical solutions to \eqref{1}, which is formalized in the following theorem.
\begin{theorem} \label{thm1}
Let \(\Omega \subset \mathbb{R}^2\) be a bounded domain with smooth boundary, let \(r,\mu,\alpha,\beta,\chi\) be positive constants, and let \(\eta,k\in(0,1)\). Suppose that \(\phi\) and \(f\) satisfy the conditions \eqref{phi} and \eqref{f}, respectively. Then
    the system \eqref{1} under the boundary conditions \eqref{bdry} and initial conditions \eqref{initial'} possesses a global classical solution $(n,c,u,P)$ such that 
    \begin{equation*}
        \begin{cases}
            n \in C^0 \left ( \bar{\Omega}\times [0,\infty) \right ) \cap C^{2,1}\left ( \bar{\Omega}\times (0,\infty) \right ), \\
             c \in C^0 \left ( \bar{\Omega}\times [0,\infty) \right ) \cap C^{2,1}\left ( \bar{\Omega}\times (0,\infty) \right ), \\ 
             u \in C^0 \left ( \bar{\Omega}\times [0,\infty) \right ) \cap C^{2,1}\left ( \bar{\Omega}\times (0,\infty) \right ) \quad \text{and } \\
             P \in C^{1,0}\left ( \bar{\Omega} \times (0, \infty)\right ).
        \end{cases}
    \end{equation*}
    Moreover, $n>0$, $c>0$ in $\bar{\Omega}\times (0, \infty)$, and the solution is globally bounded in the sense that
    \begin{align*}
        \sup_{t>0} \left \{ \left \| n(\cdot,t) \right \|_{L^\infty(\Omega)}+ \left \| c(\cdot,t) \right \|_{W^{1,\infty}(\Omega)}+ \left \| u(\cdot,t) \right \|_{L^\infty(\Omega)}  \right \} <\infty .
    \end{align*}
\end{theorem}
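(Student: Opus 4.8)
The plan is to establish global existence and boundedness via the standard pipeline for chemotaxis–fluid systems, with the sub-logistic source doing the heavy lifting at the critical step. I would begin with local existence and a blow-up criterion obtained from a fixed-point argument in suitable Hölder/Sobolev spaces, so that it suffices to derive a priori bounds on $\|n(\cdot,t)\|_{L^\infty}$, $\|c(\cdot,t)\|_{W^{1,\infty}}$, and $\|A^\gamma u(\cdot,t)\|_{L^2}$ that are uniform on the maximal existence interval $(0,T_{\max})$ (indeed uniform in $t>0$, which also yields the absorbing set). The first easy bounds are: integrating the $n$-equation gives $\frac{d}{dt}\int_\Omega n + \int_\Omega \frac{\mu n^2}{\log^\eta(n+e)} = r\int_\Omega n$, and since $x \mapsto \frac{x^2}{\log^\eta(x+e)}$ still dominates any superlinear-minus-linear comparison, this produces a uniform-in-time bound on $\|n(\cdot,t)\|_{L^1(\Omega)}$ (with a bound eventually independent of the initial mass, for the absorbing-set claim). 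Parabolic regularity for the $c$-equation then turns the $L^1$ bound on $n$ into preliminary bounds on $\|\nabla c\|_{L^q}$ for $q<2$, and a pointwise lower bound $c \geq \underline{c}>0$ follows from a comparison/ODE argument exploiting $\beta n \geq 0$ and the fluid transport being divergence-free; this lower bound is what tames the singular factor $c^{-k}$.

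The core of the argument is a coupled energy estimate. I would test the $n$-equation with $\log n$ (or with $n^{p-1}$ for $p$ slightly above $1$) and simultaneously control $\int_\Omega |\nabla c|^2/c$ or $\int_\Omega |\nabla c^{1-k}|^2$-type quantities, together with a Stokes energy estimate $\frac{d}{dt}\|A^\beta u\|^2 \lesssim \dots + \|n\nabla\phi\|^2 + \|f\|^2$. The singular sensitivity contributes a term like $\chi \int_\Omega \frac{n}{c^k}\nabla c \cdot \nabla(\log n)$, which after Young's inequality and the lower bound on $c$ is absorbed provided the production term from the sub-logistic damping, namely $\int_\Omega \frac{\mu n^2}{\log^\eta(n+e)} \cdot (\text{weight})$, beats the bad entropy-type growth $\int_\Omega n|\nabla c|^2$-flavored contributions. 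In two dimensions with $k\in(0,1)$ and $\eta\in(0,1)$, the gain of the $\log^{-\eta}$ factor is exactly enough — as in \cite{Minh6,zh} — to close a Gronwall inequality for a combined functional $y(t) = \int_\Omega n\log n + \int_\Omega |\nabla c|^2 + \|A^\beta u\|^2$ (for appropriate $\beta \in (\frac12,1)$), giving $\int_t^{t+1}\int_\Omega n^2 \leq C$ uniformly, and then $n \in L^\infty_t L^p_x$ for all finite $p$ via a standard testing-with-$n^{p-1}$ bootstrap that uses the $2$D Gagliardo–Nirenberg inequality and, again, the sub-logistic dissipation to kill the borderline term.

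From $n \in L^\infty((0,T_{\max}); L^p(\Omega))$ with $p$ large, maximal Sobolev regularity for the $c$-equation upgrades this to $\nabla c \in L^\infty_t L^\infty_x$, and $L^p$–$L^q$ estimates for the Stokes semigroup upgrade the fluid bound to $\|A^\gamma u\|_{L^2} \leq C$, hence $u \in L^\infty_t L^\infty_x$. Finally a Moser–Alikakos iteration (or a semigroup/variation-of-constants estimate for the first equation, using that $\|\nabla c/c^k\|_{L^\infty}$ and $\|u\|_{L^\infty}$ are now controlled and that the zeroth-order source is bounded above by $rn$) yields the uniform $\|n(\cdot,t)\|_{L^\infty}$ bound, contradicting the blow-up criterion and proving global existence and boundedness; tracking that every constant can be made independent of the initial data for large $t$ delivers the absorbing set in $C^0(\bar\Omega)\times W^{1,\infty}(\Omega)\times C^0(\bar\Omega;\mathbb{R}^2)$. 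The main obstacle is the coupled energy step: one must choose the test-function exponents and the auxiliary $c$-functional so that the singular term $\chi\int \frac{n}{c^k}\nabla c\cdot\nabla(\cdot)$ and the cross term coupling $n$ to $|\nabla c|^2$ are \emph{both} absorbed by the single dissipative reserve coming from $\frac{\mu n^2}{\log^\eta(n+e)}$ — in $2$D this is borderline, and the $\log^{-\eta}$ factor with $\eta<1$ must be handled via an inequality of the type $s\log^{-\eta}(s+e) \geq \varepsilon s^2\log^{-\eta}(s+e) - C_\varepsilon$ combined with the fact that $\int_\Omega n\log n$ controls $\int_\Omega n\log^{-\eta}(n+e)\cdot(\log(n+e))$-weighted quantities only up to a logarithmic loss that the damping precisely compensates.
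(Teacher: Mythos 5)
Your overall pipeline (local existence plus extensibility criterion, $L^1$ and spatio-temporal bounds, an $L\log L$/energy step, $L^p$ bootstrap, then semigroup arguments for $\nabla c$, $u$, and $n$) matches the paper's architecture, but there is a genuine gap at the step your whole energy argument leans on: the claimed uniform pointwise lower bound $c\geq \underline{c}>0$. A comparison argument in the presence of the transport term $u\cdot\nabla c$ only yields $c(x,t)\geq e^{-\alpha t}\inf_{\bar\Omega}c_0$, which decays to zero and is useless for uniform-in-time estimates (and is, in addition, initial-data dependent, so it cannot feed the absorbing-set claim). In the fluid-free setting the uniform lower bound is obtained from pointwise heat-kernel lower estimates together with a mass lower bound, but the convection $u\cdot\nabla c$ destroys that argument unless one already controls $u$, which is circular; the paper explicitly states that obtaining such a lower bound with fluid coupling is completely open. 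Consequently your plan to ``tame'' $c^{-k}$ by Young's inequality plus the lower bound cannot be carried out. The paper circumvents this by never using a lower bound on $c$: it adds the term $-\tfrac{1}{3}\int_\Omega n\log c$ to the energy functional (Lemma \ref{nlogc}), which produces the good dissipative terms $-\tfrac12\int_\Omega n|\nabla c|^2/c^2$ and $-\beta\int_\Omega n^2/c$, and it controls the singular chemotactic contribution through the functional inequality $\int_\Omega |\nabla c|^{2p}/c^{p}\leq C_1\int_\Omega|\Delta c|^p+C_2\int_\Omega c^p$ (Lemma \ref{LK-1}, used in Lemma \ref{LlolL'}); this is the key missing idea in your proposal.

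A secondary, related gap: your Stokes energy step bounds the forcing by $\|n\nabla\phi\|_{L^2}^2$, i.e. it needs $n\in L^2_{t,x}$ locally uniformly, whereas the sub-logistic source only provides $\int_t^{t+\tau}\int_\Omega n^2/\log^\eta(n+e)\leq C$; the logarithmic deficit cannot be waved away, and the paper's way around it is to estimate $\int_\Omega n|u|$ via the Moser--Trudinger inequality (Lemma \ref{Moser-Trudinger}), trading it for $\int_\Omega n\log n$, which \emph{is} dominated by the sub-logistic dissipation; this yields the uniform $L^2$ bound for $u$ (Lemma \ref{L2u}) before any $L^2$ information on $n$ is available. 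Without these two devices your combined Gronwall functional does not close.
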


\begin{remark}
  When $u \equiv 0$, this theorem strengthens Theorem~1.1 in \cite{minh-preprint-1} and Theorem~1.3 in \cite{Minh_JDE2} by demonstrating that the sub-logistic source indeed ensures the existence of global bounded classical solutions. 
\end{remark}

Next, we shall establish the existence of an absorbing set, as formally stated in the following theorem.

\begin{theorem} \label{thm2}
Under the assumptions of Theorem \ref{thm1}, the system \eqref{1} possesses a bounded absorbing set in 
$C^{0}(\bar{\Omega}) \times W^{1, \infty}(\Omega) \times C^{0,\theta}(\bar{\Omega})$ for some $\theta \in (0,1)$. 
More precisely, there exists a constant $C>0$, independent of the initial data, such that
\[
\limsup_{t \to \infty} \Biggl\{
\left\| n(\cdot,t) \right\|_{C^{0}(\bar{\Omega})}
+ \left\| c(\cdot,t) \right\|_{W^{1,\infty}({\Omega})}
+ \left\| u(\cdot,t) \right\|_{ C^{0,\theta}(\bar{\Omega})}
\Biggr\} \leq C.
\]

\end{theorem}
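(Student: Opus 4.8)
The plan is to upgrade the a priori bounds obtained in the proof of Theorem~\ref{thm1} from "uniformly bounded in time" to "eventually absorbed into a ball whose radius is independent of the initial data," following the usual dissipative-estimate hierarchy. The starting point is the mass equation: integrating the first equation of \eqref{1} over $\Omega$ and using the no-flux boundary conditions gives $\frac{d}{dt}\int_\Omega n = r\int_\Omega n - \mu\int_\Omega \frac{n^2}{\log^\eta(n+e)}$. Since $\log^\eta(n+e)$ grows slower than any power of $n$, a Jensen/Hölder argument bounds $\int_\Omega n$ from below against $\bigl(\int_\Omega n\bigr)^2/\log^\eta\!\bigl(\int_\Omega n + e\bigr)$ up to a constant depending only on $|\Omega|$, so the mass satisfies a differential inequality of the form $y' \le ry - c_1 y^2/\log^\eta(y+e)$, whose solutions enter and remain in a fixed interval $[0,M_0]$ with $M_0$ independent of the data. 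This is the key structural point: the sub-logistic term is strong enough to force an absorbing bound on $\|n(\cdot,t)\|_{L^1}$ for large $t$.

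Next I would propagate this into the second and third equations. From $\|n(\cdot,t)\|_{L^1}\le M_0$ for $t\ge T_0$, parabolic smoothing for the $c$-equation (with the $-\alpha c$ damping providing decay) yields an eventual bound on $\|c(\cdot,t)\|_{L^p}$ and then, via the variation-of-constants formula and $L^p$–$L^q$ estimates for the Neumann heat semigroup, on $\|\nabla c(\cdot,t)\|_{L^q}$ for suitable $q$; the damping constant $\alpha$ makes all these bounds ultimately data-independent. Simultaneously, testing the Stokes equation and using $f\in L^\infty(\Omega\times(0,\infty))$ together with the Poincaré inequality for $u$ (which vanishes on $\partial\Omega$) gives an absorbing bound for $\|u(\cdot,t)\|_{L^2}$, and then the standard two-dimensional Navier–Stokes machinery — testing with $Au$, exploiting $\|n\nabla\phi\|_{L^2}\le \|\nabla\phi\|_{L^\infty}\|n\|_{L^2}$ — upgrades this to an absorbing bound for $\|A^{1/2}u(\cdot,t)\|_{L^2}$, hence for $\|u(\cdot,t)\|_{L^\infty}$ by Sobolev embedding in dimension two.

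The remaining work is to obtain the absorbing $L^\infty$ bound on $n$. Here I would redo the entropy-type functional estimate from the proof of Theorem~\ref{thm1}, but now tracking that every constant that enters can be chosen independent of the initial data once $t$ is large: a Moser–Alikakos iteration, or alternatively a bootstrap through $\|n(\cdot,t)\|_{L^p}$ for increasing $p$ using the duality argument with the semigroup and the quadratic-type absorption from the sub-logistic term, produces $\limsup_{t\to\infty}\|n(\cdot,t)\|_{L^\infty}\le C$ with $C$ uniform. Feeding this final bound back into the $c$- and $u$-equations and using interior parabolic Schauder estimates on time intervals of unit length (which lose dependence on initial data because the "initial" time can be taken arbitrarily large) gives the eventual $W^{1,\infty}$ bound on $c$ and the eventual $C^{0,\theta}$ bound on $u$.

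The main obstacle I anticipate is the first step done \emph{quantitatively}: one must show that the sub-logistic dissipation, which is genuinely weaker than quadratic, still yields a differential inequality with a data-independent attracting set — and then carry that uniformity through the singular chemotaxis term $\chi\nabla\cdot(n c^{-k}\nabla c)$, which requires a data-independent positive lower bound for $c$ (obtained from the $c$-equation by comparison, using that $n\ge0$ and the eventual $L^1$ bound, so that $c$ cannot decay to zero). Controlling this lower bound for $c$ uniformly in the initial data, and ensuring the weak singularity $k\in(0,1)$ is mild enough that the resulting $\|\nabla c/c^k\|$ terms are absorbable in the $L^p$ estimates for $n$, is the delicate part; everything downstream is a by-now-standard, if lengthy, regularization argument.
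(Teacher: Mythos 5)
There is a genuine gap, and it sits exactly at the point your proposal calls ``the delicate part.'' You propose to obtain a data-independent positive lower bound for $c$ by comparison in the $c$-equation, using only $n\ge 0$ and the eventual $L^1$ bound, and then to use this lower bound to absorb the singular flux $\chi\nabla\cdot(nc^{-k}\nabla c)$ in the $L^p$ estimates for $n$. Comparison applied to $c_t+u\cdot\nabla c=\Delta c-\alpha c+\beta n$ with $n\ge 0$ only yields $c(\cdot,t)\ge e^{-\alpha t}\inf_{\bar\Omega}c_0$ (cf. Lemma~\ref{local}), which decays to zero and depends on the data; upgrading this to a uniform-in-time positive lower bound would require pointwise lower heat-kernel estimates driven by a lower bound on $n$, and it is precisely the fluid transport term that obstructs this — the paper states explicitly that such a lower bound is completely open in the fluid-coupled setting. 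The paper's proof is built to avoid it: the functional $\int_\Omega n\log n-\tfrac13\int_\Omega n\log c+\tfrac12\int_\Omega|\nabla c|^2$ (Lemmas~\ref{nlogc} and~\ref{LlogL-p}), the inequality of Lemma~\ref{LK-1} for $\int_\Omega|\nabla c|^{2p}/c^{p}$, and the weighted bounds $\int_\Omega n^p c^{-q}$ with $q<p-1$ in Lemma~\ref{Lp} are exactly the devices that control the singular sensitivity without any lower bound on $c$. As written, your route would fail at this step.

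A second, smaller but still real, gap is the absorbing $L^2$ bound for $u$. Testing the Stokes equation with $u$ produces the term $\int_\Omega n\nabla\phi\cdot u$, which cannot be closed using only the $L^1$ absorbing bound on $n$ and Poincar\'e: one would need (at least) a spatio-temporal $L^2$ bound on $n$, and the sub-logistic dissipation only provides $\int_t^{t+\tau}\int_\Omega n^2/\log^\eta(n+e)$, which does not dominate $\int n^2$. The paper resolves this with the Moser--Trudinger inequality (Lemma~\ref{Moser-Trudinger}), splitting $\int_\Omega n|u_i|$ against the entropy $\int_\Omega n\ln n$ and then absorbing that entropy into the sub-logistic dissipation before invoking the ODE lemma (Lemma~\ref{L2u}); your proposal presents this step as standard energy estimates, which it is not at this stage of the bootstrap. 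The later parts of your outline (eventual $L^p$ and $L^\infty$ bounds for $n$ via semigroup or iteration arguments, then eventual $W^{1,\infty}$ for $c$ and $C^{0,\theta}$ for $u$ on unit time windows with data-independent constants) do match the paper's overall architecture, but they are only reachable once the two gaps above are filled by a mechanism that does not presuppose a lower bound on $c$.
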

\begin{remark}
 Whether one can establish the existence of a global attractor in \(C^{0}(\bar{\Omega}) \times W^{1,\infty}(\Omega) \times C^{0,\theta}(\bar{\Omega})\) for some \(\theta \in (0,1)\) for system \eqref{1} remains an open problem. Nevertheless, the framework developed in \cite{Alex} may be applicable to proving the existence of a global attractor in a weaker topological space.
\end{remark}
Our final main result, which establishes the persistence of the total population of solutions, is presented in the following theorem.

\begin{theorem}\label{thm3}
Under the assumptions of Theorem \ref{thm1}, there exist constants
$M_*,M^*>0$, possibly depending on the initial data, such that
\[
    M_*
    \leq \int_\Omega n(x,t)\,dx
    \leq M^*
    \qquad \text{for all }t\geq0.
\]
Moreover, there exist constants $\underline M,\overline M>0$, independent
of the initial data, such that
\[
    \underline M
    \leq \liminf_{t\to\infty}\int_\Omega n(x,t)\,dx
    \leq \limsup_{t\to\infty}\int_\Omega n(x,t)\,dx
    \leq \overline M.
\]
More precisely, if $\overline m$ denotes the unique positive solution of
\[
    \frac{\mu\overline m}{\log^\eta(\overline m+e)}=r,
\]
then one may take
\[
    \overline M=|\Omega|\overline m
    \qquad\text{and}\qquad
    M^*=\max\left\{\int_\Omega n_0,\overline M\right\}.
\]
\end{theorem}

\begin{remark}
 In the absence of fluid coupling (i.e., when $u \equiv 0$), the positive lower bound for $\int_\Omega n(\cdot,t) \, dx$, together with Lemma 3.1 in \cite{Fujie_2018}, yields a uniform lower bound for $c$. This result resolves the question posed in Remark 1.7 in \cite{Minh_JDE2}. Consequently, by employing this lower bound in conjunction with bootstrapping arguments, one may obtain compactness of an absorbing set, thereby establishing the existence of a global attractor. In the fully parabolic chemotaxis--fluid system, however, Lemma 3.1 in \cite{Fujie_2018} is no longer applicable, and hence such a lower bound for $c$ cannot be derived.
\end{remark}

\textbf{Challenges and main ideas. }The main difficulties in establishing global boundedness of solutions, as well as the
existence of an absorbing set, arise primarily from the singular behavior of the chemical
sensitivity near zero and its coupling with the fluid interaction.

First, the presence of a sub-logistic source poses a significant challenge, since it is
unclear whether the spatio-temporal estimate
\[
\sup_{t\in (0,T_{\rm max}-\tau )} \int_t^{t+\tau} \int_\Omega
\frac{n^2(\cdot,s)}{\log^\eta (n(\cdot,s)+e)} \, dx \,ds < \infty,
\]
where $T_{\rm max} \in (0, \infty]$ is the maximal existence time specified in Lemma \ref{local} and $\tau = \min\left\{1, \frac{T_{\rm max}}{2}\right\}$,
can be exploited to deduce uniform-in-time boundedness of
$\int_\Omega |\nabla c(\cdot,t)|^2$ and $\int_\Omega |u(\cdot,t)|^2$.
To address this issue, we are motivated by the main idea in \cite{Winkler_SIAM} and employ
a Moser--Trudinger type inequality (see Lemma~\ref{Moser-Trudinger}) to establish
uniform-in-time $L^2(\Omega)$-boundedness of the fluid velocity $u$.

Secondly, in contrast to chemotaxis systems with singular sensitivity but without fluid
interaction, it remains completely open whether a positive lower bound for the chemical
concentration can be obtained in the presence of fluid coupling. Consequently, standard
techniques commonly used in the fluid-free setting fail to apply here.

To overcome this obstacle, we are inspired by the approach in \cite{minh-preprint-1},
where the following energy functional is introduced:
\begin{align*}
    F(t) := \int_\Omega n \log n - \frac{1}{3} \int_\Omega n \log c
    + \frac{1}{2} \int_\Omega |\nabla c|^2 \qquad \text{for all } t > 0.
\end{align*}
We first show that $F(t)$ remains uniformly bounded in time. Based on this estimate,
we further establish the boundedness of
\begin{align*}
    G(t) := \int_\Omega n^p + \int_\Omega \frac{n^p}{c^q}
    + \int_\Omega |\nabla c|^{2p} \qquad \text{for all } t > 0,
\end{align*}
for all $p > 1$ and $0 < q < p - 1$.

With these estimates at hand, we apply a standard Neumann heat semigroup approach to
derive uniform-in-time boundedness of solutions, which in turn yields the existence
of an absorbing set.\\
Finally, to establish the persistence of the total population of solutions, we exploit the uniform boundedness of $n$ in $C^{0,\lambda}(\overline{\Omega})$ (with $\lambda \in (0,1)$) to derive a differential inequality of the form
\begin{equation*}
    m'(t) \ge r\, m(t) - C\, m^{1+\theta}(t) \qquad \text{for all } t > 0,
    \label{eq:differential-inequality}
\end{equation*}
where
\begin{equation*}
    m(t) := \frac{1}{|\Omega|} \int_{\Omega} n(x,t) \, dx,
\end{equation*}
and $C > 0$, $\theta \in (0,1)$. An application of the comparison principle for ordinary differential equations then yields a uniform positive lower bound for $m(t)$ for all $t > 0$.

\textbf{Plan of the paper. } In Section~\ref{S2}, we briefly recall the local well-posedness result and introduce several important inequalities that will be used frequently in the subsequent sections. In Section~\ref{S3}, we establish a number of essential estimates for solutions to \eqref{1}, which serve as a foundation for Section~\ref{S4} — the central part of the paper — where we prove the \( L \log L \) boundedness result. Next, we establish an $L^p$ boundedness result for solutions in Section \ref{S4'}. Finally, in Section~\ref{S5}, we prove our main results, including the existence of globally bounded classical solutions, the existence of a bounded absorbing set, and the mass persistence of solutions.

\section{ Preliminaries} \label{S2}
In this section, we establish the local wellposedness of solutions as well as provide several useful inequalities, which will be later employed in the sequel sections. Let us begin with the local existence result as stated in the following lemma.

\begin{lemma} \label{local}
     Let $\Omega \subset \mathbb{R}^2$ be a bounded domain with smooth boundary, and let $r, \mu, \alpha, \beta, \chi  $ be positive constants and $\eta , k \in (0,1)$. Assume that $(n_0,c_0,u_0)$, $\phi$ and $f$ satisfy the conditions \eqref{initial'}, \eqref{phi} and \eqref{f} respectively. Then there exist $T_{\rm max} \in (0,\infty]$ and a quadruple of functions $(n,c,u,P)$ complying with
    \begin{equation} \label{local-1}
        \begin{cases}
            n \in C^0 \left ( \bar{\Omega}\times [0,T_{\rm max}) \right ) \cap C^{2,1}\left ( \bar{\Omega}\times (0,T_{\rm max}) \right ), \\
             c \in C^0 \left ( \bar{\Omega}\times [0,T_{\rm max}) \right ) \cap C^{2,1}\left ( \bar{\Omega}\times (0,T_{\rm max}) \right ), \\ 
             u \in C^0 \left ( \bar{\Omega}\times [0,T_{\rm max}) \right ) \cap C^{2,1}\left ( \bar{\Omega}\times (0,T_{\rm max}) \right ) \quad \text{and } \\
             P \in C^{1,0}\left ( \bar{\Omega} \times (0, T_{\rm max})\right ),
        \end{cases}
    \end{equation}
  such that up to addition of constants to $P$, $(n,c,u,P)$ solves \eqref{1} under boundary conditions \eqref{bdry} uniquely on $(0,T_{\rm max})$. Moreover, $n(x,t)>0$ and
    \[
        c(x,t)\geq e^{-\alpha t}\inf_{x\in\bar\Omega}c_0(x)
        \qquad\text{for all }(x,t)\in\bar\Omega\times(0,T_{\rm max}),
    \]
    and if $T_{\rm max}<\infty$, then 
    \begin{align} \label{ext}
        \limsup_{t\to T_{\rm max}} \left \{ \left \| n(\cdot,t) \right \|_{L^\infty(\Omega)}+ \left \| c(\cdot,t) \right \|_{W^{1,\infty}(\Omega)}+ \left \| A^\gamma u(\cdot,t) \right \|_{L^2(\Omega)}  \right \} =\infty \quad.
    \end{align}
\end{lemma}
\begin{proof}
   The existence of solutions in $\Omega \times (0,T_{\rm max})$ with some $T_{\rm max}\in (0,\infty]$ and the extensibility criterion \eqref{ext} can be proven by applying a standard fixed point argument as used in \cite{Winkler2012}[Lemma 2.1] and \cite{Lankeit_2016}[Theorem 2.3(i)]. Furthermore, the comparison principle entails that
    \[
        n(x,t)>0,\qquad
        c(x,t)\geq e^{-\alpha t}\inf_{x\in\bar\Omega}c_0(x)
        \quad\text{for all }(x,t)\in\bar\Omega\times(0,T_{\rm max}).
    \]
    To prove uniqueness, let us assume that $(n_1,c_1,u_1,P_1)$ and
    $(n_2,c_2,u_2,P_2)$ are solutions to \eqref{1} in $\Omega\times(0,T_{\rm max})$ satisfying \eqref{local-1}. For any $T\in(0,T_{\rm max})$, we have
    \[
        c_i(x,t)\geq m_*:=e^{-\alpha T}\inf_{x\in\bar\Omega}c_0(x)
        \qquad\text{for all }(x,t)\in\bar\Omega\times(0,T],\quad i=1,2.
    \]
    Using this lower bound for $c_i$ and following arguments similar to those in
    \cite{Winkler2012}[Lemma 2.1] and \cite{KMT_2015}[Proposition 3.1], we deduce that
    $(n_1,c_1,u_1)=(n_2,c_2,u_2)$ in $\Omega\times(0,T]$. Therefore, solutions to \eqref{1} are unique up to addition of constants to $P$. 
\end{proof}
Throughout the remainder of this work, we denote $(n, c, u, P)$ as a classical solution to system~\eqref{1} in the domain $\Omega \times (0, T_{\max})$ as specified in Lemma~\ref{local}. The next two lemmas provide elementary yet very useful differential inequalities, which allows us to exploit the spatial-temporal information of the solutions.
\begin{lemma}\label{ODI}
    Let $a>0$, $t_0\in\mathbb R$, $T\in(t_0,\infty]$, and suppose that the nonnegative function
    $h\in L^1_{\rm loc}([t_0,T))$ has the property that there exist $\tau\in(0,T-t_0)$ and $b>0$ such that
    \[
        \frac1{\tau}\int_t^{t+\tau}h(s)\,ds\leq b
        \qquad\text{for all }t\in[t_0,T-\tau).
    \]
    Assume that $y\in C^0([t_0,T))\cap C^1((t_0,T))$ satisfies
    \[
        y'(t)+ay(t)\leq h(t)
        \qquad\text{for all }t\in(t_0,T).
    \]
    Then
    \[
        y(t)\leq y(t_0)e^{-a(t-t_0)}
        +\frac{b\tau}{1-e^{-a\tau}}
        \qquad\text{for all }t\in[t_0,T).
    \]
\end{lemma}

\begin{proof}
     For a detailed proof, we refer the reader to \cite[ Lemma 3.4]{Winkler+2019}.
\end{proof}

 \begin{lemma} \label{ODI'}
   Let $a\in\mathbb R$ and $T\in(a,\infty]$. Assume that
   $y:[a,T)\to[0,\infty)$ is absolutely continuous and that
   \[
        \int_t^{t+\tau}y(s)\,ds\leq L_1
        \qquad\text{for all }t\in(a,T-\tau),
   \]
   where $\tau=\min\left\{1,\frac{T-a}{2}\right\}$ and $L_1>0$. Suppose further that
   \[
        y'(t)\leq h(t)y(t)+g(t)
        \qquad\text{for a.e. }t\in(a,T),
   \]
   where $h$ and $g$ are nonnegative continuous functions on $[a,T)$ satisfying
   \[
        \int_t^{t+\tau}h(s)\,ds\leq L_2,
        \qquad
        \int_t^{t+\tau}g(s)\,ds\leq L_3
        \qquad\text{for all }t\in(a,T-\tau).
   \]
   Then
   \[
        y(t)\leq \frac{L_1e^{L_2}}{\tau}+L_3e^{L_2}
        \qquad\text{for all }t\in(a+\tau ,T).
   \]
\end{lemma}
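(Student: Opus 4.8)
The plan is to prove Lemma~\ref{ODI'} by reducing it to an application of the Gronwall inequality on a sliding window, exploiting all three integral bounds simultaneously. First I would fix $t \in (a, T)$ and, if $t > a + \tau$, choose an auxiliary point $t_0 = t_0(t) \in [t - \tau, t]$ at which $y$ attains a controlled value; the natural choice is a point where $y(t_0) \le L_1/\tau$, which must exist because $\int_{t-\tau}^{t} y(s)\,ds \le L_1$ forces the average of $y$ over the window $[t-\tau, t]$ to be at most $L_1/\tau$, and hence $y$ cannot exceed $L_1/\tau$ on all of that interval. (If $t \le a + \tau$, one instead integrates from $a$ itself, using that $y$ is finite at $a$; but since the claimed bound is stated for $t$ away from the left endpoint in the intended application, the window argument is the essential case.)

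Next I would apply the differential inequality $y'(s) \le h(s) y(s) + g(s)$ on the interval $[t_0, t]$. By the integrating-factor form of Gronwall's lemma,
\begin{align*}
    y(t) \le y(t_0)\exp\!\left(\int_{t_0}^{t} h(s)\,ds\right) + \int_{t_0}^{t} g(s)\exp\!\left(\int_{s}^{t} h(\sigma)\,d\sigma\right) ds.
\end{align*}
Since $t - t_0 \le \tau$, every interval of integration appearing here is contained in a window of length $\tau$, so $\int_{t_0}^{t} h \le L_2$ and $\int_{s}^{t} h \le L_2$ and $\int_{t_0}^{t} g \le L_3$. Substituting the bound $y(t_0) \le L_1/\tau$ then yields
\begin{align*}
    y(t) \le \frac{L_1}{\tau} e^{L_2} + L_3 e^{L_2},
\end{align*}
which is exactly the claimed estimate.

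The only genuinely delicate point is the selection of the good starting point $t_0$ and making sure the argument is uniform in $t$: one must verify that for every admissible $t$ the window $[\max\{a, t-\tau\}, t]$ has length at least something fixed (here $\tau = \min\{1, (T-a)/2\}$ is exactly engineered so that $t - \tau \ge a$ whenever $t \ge a + \tau$, and the window-integral hypotheses are assumed precisely on $(a, T-\tau)$), and that the mean-value-type extraction of $t_0$ is legitimate — this is immediate from continuity of $y$ together with the integral bound, since a continuous nonnegative function whose integral over an interval of length $\tau$ is at most $L_1$ must dip to at most $L_1/\tau$ somewhere on that interval. Everything else is a routine invocation of Gronwall's inequality, so I do not expect any serious obstacle; the lemma is essentially a packaging of standard ODE comparison tools in the precise form needed later.
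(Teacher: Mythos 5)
Your argument is essentially identical to the paper's proof: the paper likewise extracts a point $t_0 \in (t-\tau,t)\cap[a,\infty)$ with $y(t_0)\le L_1/\tau$ from the windowed integral bound and then applies the integrating-factor (Gronwall) estimate on $[t_0,t]$, using $\int h\le L_2$ and $\int g\le L_3$ over windows of length at most $\tau$. Your parenthetical care about $t\le a+\tau$ is, if anything, slightly more explicit than the paper's treatment, so there is nothing to correct.
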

    
\begin{proof}
   The proof can be found in Lemma 2.4 in \cite{minhJMFM}.
\end{proof}

The following lemma provides a modified version of the Gagliardo--Nirenberg interpolation inequality, which is used to establish \( L^p \) boundedness of solutions without requiring a largeness condition on \( \mu \). 

\begin{lemma}\label{C52.ILGN} Assume that $\Omega \subset \mathbb{R}^2$ is a bounded domain with smooth boundary  and $m>0$, $\sigma>\xi \geq 0$. For each $\varepsilon>0$, there exists $C=C(\varepsilon,\xi,\sigma)>0$ such that the following inequality holds
    \begin{align}\label{C52.ILGN.1}
        \int_\Omega \phi^ {m+1} \ln^{\xi}(\phi+e) \leq \varepsilon \left ( \int_\Omega \phi \ln^{\sigma}(\phi+e) \right ) \left ( \int_\Omega |\nabla \phi^{\frac{m}{2}}|^2 \right  ) +\varepsilon \left ( \int_\Omega  \phi \right )^m \left ( \int_\Omega \phi \ln^{\sigma}(\phi+e) \right ) +C,
    \end{align}
     for all positive function $\phi \in C^1 (\bar{\Omega})$.
\end{lemma}
\begin{proof}
    The proof can be found in \cite[Lemmas 2.4 and 2.5]{Minh5}.
\end{proof}

Next, we recall from  \cite{Winkler_SIAM}[Lemma 2.2] following functional inequality which will be later applied to obtain $L^2$ boundedness for $u$ in Lemma \ref{L2u}.

\begin{lemma} \label{Moser-Trudinger}
    Let $\Omega \subset \mathbb{R}^2$ be a bounded domain with smooth boundary. Then for all $\varepsilon>0$ there exists $M=M(\varepsilon,\Omega)>0$ such that if $0  \not\equiv \phi \in  C^0(\bar{\Omega}) $ is nonnegative and $\psi \in W^{1,2}(\Omega)$, then for each $a>0$
    \begin{align}
        \int_\Omega \phi |\psi| \leq \frac{1}{a}\int_\Omega \phi \ln \frac{\phi}{\bar{\phi}} + \frac{(1+\varepsilon)a}{8\pi}\cdot \left \{ \int_\Omega \phi \right \} \cdot \int_\Omega |\nabla \psi|^2+Ma \cdot \left \{ \int_\Omega \phi \right \} \cdot \left \{ \int_\Omega |\psi| \right \}^2+ \frac{M}{a}\int_\Omega \phi,
    \end{align}
    where $\bar{\phi}:= \frac{1}{|\Omega|}\int_\Omega \phi $.
\end{lemma}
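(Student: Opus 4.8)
The plan is to reduce the inequality to the standard two--dimensional Moser--Trudinger inequality, the passage being effected by the \emph{sharp} (dual, variational) form of Young's inequality for the entropy rather than its pointwise form. Since $\phi\not\equiv 0$ is continuous and nonnegative we have $\int_\Omega\phi>0$ and $\bar\phi>0$, so every logarithm below is meaningful, and since $\Omega\subset\mathbb{R}^2$, Trudinger's exponential integrability theorem guarantees $\int_\Omega e^{a|\psi|}<\infty$ for all $a>0$ and all $\psi\in W^{1,2}(\Omega)$.

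First I would establish the bound
\begin{align*}
\int_\Omega\phi|\psi|\le\frac1a\int_\Omega\phi\ln\frac{\phi}{\bar\phi}+\frac1a\Bigl(\int_\Omega\phi\Bigr)\ln\!\Bigl(\frac1{|\Omega|}\int_\Omega e^{a|\psi|}\Bigr).
\end{align*}
This is the Gibbs (Donsker--Varadhan) variational principle: regarding $d\nu:=|\Omega|^{-1}\,dx$ as a probability measure on $\Omega$ and $f:=\phi/\bar\phi$ as a $\nu$-probability density (indeed $\int_\Omega f\,d\nu=1$), concavity of $\ln$ applied to the probability measure $f\,d\nu$ gives $\int_\Omega f\cdot a|\psi|\,d\nu-\int_\Omega f\ln f\,d\nu=\int_\Omega f\ln\!\bigl(e^{a|\psi|}/f\bigr)\,d\nu\le\ln\int_\Omega e^{a|\psi|}\,d\nu$, and multiplying by $a^{-1}\bar\phi|\Omega|=a^{-1}\int_\Omega\phi$ yields the displayed estimate. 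The key point is that the \emph{pointwise} inequality $xy\le x\ln x-x+e^{y}$ would only deliver the weaker bound in which $\ln\bigl(\tfrac1{|\Omega|}\int_\Omega e^{a|\psi|}\bigr)$ is replaced by $\tfrac1{|\Omega|}\int_\Omega e^{a|\psi|}-1$; the latter grows like $\exp(ca^{2}\|\nabla\psi\|_{L^2}^{2})$ and cannot be absorbed into the right-hand side of the lemma, whereas its logarithm is precisely the quantity governed by Moser--Trudinger.

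Next I would invoke the two--dimensional Moser--Trudinger inequality in the form: for every $\epsilon>0$ there is $C(\epsilon,\Omega)>0$ such that
\begin{align*}
\int_\Omega e^{w}\le\exp\!\Bigl(\frac{1+\epsilon}{8\pi}\int_\Omega|\nabla w|^{2}+\frac1{|\Omega|}\int_\Omega w+C(\epsilon,\Omega)\Bigr)\qquad\text{for all }w\in W^{1,2}(\Omega),
\end{align*}
a consequence of Trudinger's embedding $W^{1,2}(\Omega)\hookrightarrow L^{\Phi}$ with $\Phi(t)=e^{t^{2}}-1$ via the elementary bound $w-\bar w\le\delta(w-\bar w)^{2}+\tfrac1{4\delta}$ with $\delta$ chosen subcritically (the Neumann Trudinger exponent being $2\pi$, which accounts for the constant $\tfrac1{8\pi}$ and the $\epsilon$-loss). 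Applying this with $w:=a|\psi|$ — legitimate since $|\psi|\in W^{1,2}(\Omega)$ and $|\nabla|\psi||=|\nabla\psi|$ a.e.\ — and substituting into the estimate of the previous step gives
\begin{align*}
\int_\Omega\phi|\psi|\le\frac1a\int_\Omega\phi\ln\frac{\phi}{\bar\phi}+\frac{(1+\epsilon)a}{8\pi}\Bigl(\int_\Omega\phi\Bigr)\int_\Omega|\nabla\psi|^{2}+\frac1{|\Omega|}\Bigl(\int_\Omega\phi\Bigr)\int_\Omega|\psi|+\frac{C(\epsilon,\Omega)}a\int_\Omega\phi.
\end{align*}

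Finally I would absorb the last two terms. By Young's inequality $\tfrac1{|\Omega|}\int_\Omega|\psi|\le\tfrac M2 a\bigl(\int_\Omega|\psi|\bigr)^{2}+\tfrac1{2Ma|\Omega|^{2}}$, so for $M=M(\epsilon,\Omega)$ large enough one has $\tfrac1{|\Omega|}\bigl(\int_\Omega\phi\bigr)\int_\Omega|\psi|+\tfrac{C(\epsilon,\Omega)}a\int_\Omega\phi\le Ma\bigl(\int_\Omega\phi\bigr)\bigl(\int_\Omega|\psi|\bigr)^{2}+\tfrac Ma\int_\Omega\phi$, which is exactly the assertion. (One may, as a precaution, run the whole argument with $|\psi|$ replaced by $\min\{|\psi|,K\}$ and pass to the limit $K\to\infty$ by monotone convergence, though Trudinger's theorem makes this superfluous.) The only nonelementary ingredient is the Moser--Trudinger inequality itself, so the one place demanding care is the tracking of the constant $\tfrac1{8\pi}$ together with its $\epsilon$-slack; I do not anticipate any essential obstacle beyond that.
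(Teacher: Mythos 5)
Your proposal is correct, but note that the paper itself does not prove this lemma at all: it is imported verbatim as Lemma~2.2 of \cite{Winkler_SIAM}, so there is no in-paper argument to compare against. What you have written is a legitimate self-contained derivation, and it is essentially the standard route behind the cited result: the Gibbs/Jensen duality step (equivalently, the pointwise inequality $s\ln t\le s\ln s-s+t$ applied to $\phi/\lambda$ and optimized over the normalization $\lambda$, which is exactly your variational principle) reduces the estimate to a logarithmic Moser--Trudinger bound, and the constant $\frac{(1+\epsilon)a}{8\pi}$ then comes from the boundary (Neumann-type) exponent $2\pi$; the final absorption of $\frac{1}{|\Omega|}\bigl(\int_\Omega\phi\bigr)\int_\Omega|\psi|$ by Young's inequality into $Ma\bigl(\int_\Omega\phi\bigr)\bigl(\int_\Omega|\psi|\bigr)^2+\frac{M}{a}\int_\Omega\phi$ is routine and your choice of $M=M(\epsilon,\Omega)$ works. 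The one point to state more carefully is the source of the exponential-form inequality $\int_\Omega e^{w}\le\exp\bigl(\frac{1+\epsilon}{8\pi}\int_\Omega|\nabla w|^2+\frac{1}{|\Omega|}\int_\Omega w+C\bigr)$: the qualitative Trudinger embedding $W^{1,2}\hookrightarrow L^\Phi$ alone only yields some nonsharp constant in place of $2\pi$, hence a worse factor than $\frac{1+\epsilon}{8\pi}$; you need the sharp boundary version (Cherrier/Chang--Yang, or the form used in \cite{NSY}) with subcritical exponent, which you implicitly acknowledge but should cite explicitly, since the specific constant $\frac{1}{8\pi}$ is precisely what makes the lemma useful. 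Your side remark that the unoptimized pointwise Young inequality would leave an unabsorbable $\int_\Omega e^{a|\psi|}$ term is accurate and correctly identifies why the logarithmic (variational) form is essential.
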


The following lemma allows us to address the singularity of \(c\) near zero, even without assuming a positive lower bound for \(c\). When $p \ge 3$, the result follows directly from Proposition 1.3 in \cite{Kurt+Shen} or Proposition 3.1 in \cite{Kurt}, while a more general proof valid for all $p>1$ is given in \cite{minh-preprint-2}[Lemma 2.4].

\begin{lemma} \label{LK-1}
Let $\Omega \subset \mathbb{R}^n$ ($n \geq 2$) be a bounded domain with smooth boundary. For any $p > 1$, there exist positive constants $C_1$ and $C_2$, depending only on $p$, such that for all positive functions $w \in C^2(\overline{\Omega})$ satisfying $\frac{\partial w}{\partial \nu} = 0$ on $\partial \Omega$, the inequality
\[
\int_\Omega \frac{|\nabla w|^{2p}}{w^p} \,dx \;\leq\; C_1 \int_\Omega |\Delta w|^p \,dx \;+\; C_2 \int_\Omega w^p \,dx
\]
holds.
\end{lemma}

\section{A Priori Estimates} \label{S3}
In this section, we shall establish a series of a priori estimates for solutions to prepare for the proof of the main results. Let us commence with a uniform $L^1$ bound and spatial-temporal $L^2$ bound for solutions as follows:
\begin{lemma} \label{L1}
   There exist positive constants $C_1$, $C_2$, and $C_3$ such that 
\begin{align} \label{L1-1}
    \int_\Omega n(\cdot,t) \leq C_1 \qquad \text{for all }t\in (0,T_{\rm max}),
\end{align}
and 
\begin{align} \label{L1-1'}
    \int_\Omega c(\cdot,t) \leq C_2 \qquad \text{for all }t\in (0,T_{\rm max}),
\end{align}
as well as
\begin{align} \label{L1-2}
    \int_t^{t+\tau}\int_\Omega 
    \frac{n^2(\cdot,s)}{\log^\eta(n(\cdot,s)+e)}\,dx\,ds
    \leq C_3
    \qquad  \text{for all }t\in (0,T_{\rm max}-\tau),
\end{align}
where $\tau = \min \left \{1, \frac{T_{\rm max}}{2} \right \}$. 
If $T_{\rm max} = \infty$, then there exist $T>0$ and positive constants
$C_4$, $C_5$, and $C_6$, with $C_4,C_5,C_6$ independent of the initial data, such that
\begin{align} \label{L1-3}
    \int_\Omega n(\cdot,t) \leq C_4 \qquad \text{for all }t>T,
\end{align}
and 
\begin{align} \label{L1-3'}
    \int_\Omega c(\cdot,t) \leq C_5 \qquad \text{for all }t>T,
\end{align}
as well as
\begin{align} \label{L1-4}
    \int_t^{t+1}\int_\Omega 
    \frac{n^2(\cdot,s)}{\log^\eta(n(\cdot,s)+e)}\,dx\,ds
    \leq C_6
    \qquad  \text{for all }t>T .
\end{align}
\end{lemma}

\begin{proof}
    One can verify that there exists $K_1>0$, independent of the initial data, such that 
\[
   (r+1) \int_\Omega n 
   \leq \frac{\mu }{2} \int_\Omega \frac{n^2}{\log^\eta(n+e)} +K_1
   \qquad \text{for all }t\in (0,T_{\rm max}).
\]
Integrating the first equation of \eqref{1} over $\Omega$ and using this inequality yields
\begin{align} \label{L1.1}
    \frac{d}{dt}\int_\Omega n +\int_\Omega n
    &= (r+1)\int_\Omega n- \mu \int_\Omega \frac{n^2}{\log^\eta(n+e)} \notag \\
    &\leq -\frac{\mu }{2}\int_\Omega \frac{n^2}{\log^\eta(n+e)} +K_1
    \qquad \text{for all }t\in (0,T_{\rm max}).
\end{align}
By Gronwall's inequality,
\begin{align} \label{L1.2}
    \int_\Omega n(\cdot,t) 
    \leq e^{-t}\int_\Omega n_0+K_1(1-e^{-t})
    \leq C_1:=\max\left\{\int_\Omega n_0,K_1\right\}
    \qquad \text{for all }t\in (0,T_{\rm max}).
\end{align}
   Integrating the second equation over $\Omega$ and using \eqref{L1.2}, we obtain 
\begin{align}\label{L1.2'}
   \frac{d}{dt} \int_\Omega c +\alpha \int_\Omega c
   = \beta \int_\Omega n
   \leq \beta C_1
   \qquad \text{for all }t\in (0,T_{\rm max}).
\end{align}
Gronwall's inequality gives
\begin{align}
    \int_\Omega c(\cdot,t)
    \leq e^{-\alpha t}\int_\Omega c_0
    +\frac{\beta C_1}{\alpha}(1-e^{-\alpha t})
    \leq C_2:=\max\left\{\int_\Omega c_0,\frac{\beta C_1}{\alpha}\right\}
    \qquad \text{for all }t\in (0,T_{\rm max}).
\end{align}
Integrating \eqref{L1.1} over $(t,t+\tau)$ gives
\begin{align}\label{L1.3}
    \int_t^{t+\tau}\int_\Omega 
    \frac{n^2}{\log^\eta(n+e)}\,dx\,ds
    &\leq \frac{2}{\mu}
    \left(K_1\tau+\int_\Omega n(\cdot,t)\right) \notag\\
    &\leq C_3
    \qquad \text{for all }t\in (0,T_{\rm max}-\tau),
\end{align}
where $C_3:=\frac{2}{\mu}(K_1+C_1)$.
If $T_{\max}=\infty$, choose $T_1>0$ such that
\[
    e^{-T_1}\int_\Omega n_0\leq 1.
\]
Then \eqref{L1.2} yields
\[
    \int_\Omega n(\cdot,t)\leq C_4:=1+K_1
    \qquad\text{for all }t>T_1.
\]
Next choose $T\geq T_1$ sufficiently large such that
\[
    e^{-\alpha(T-T_1)}\int_\Omega c(\cdot,T_1)\leq 1.
\]
For $t>T_1$, the bound for $n$ gives
\[
    \frac{d}{dt}\int_\Omega c+\alpha\int_\Omega c\leq \beta C_4.
\]
Hence Gronwall's inequality on $(T_1,t)$ yields
\[
    \int_\Omega c(\cdot,t)
    \leq C_5:=1+\frac{\beta C_4}{\alpha}
    \qquad\text{for all }t>T.
\]
Finally, since $\tau=1$ when $T_{\max}=\infty$, integrating \eqref{L1.1} over $(t,t+1)$ and using the bound for $n$ gives
\[
    \int_t^{t+1}\int_\Omega 
    \frac{n^2}{\log^\eta(n+e)}\,dx\,ds
    \leq C_6:=\frac{2}{\mu}(K_1+C_4)
    \qquad\text{for all }t>T.
\]
This proves \eqref{L1-3}, \eqref{L1-3'}, and \eqref{L1-4}.
\end{proof}

Thanks to the spatio-temporal information provided in the previous lemma and the Moser-Trudinger inequality as established in Lemma \ref{Moser-Trudinger}, we can now derive a uniform $L^2$ bound for $u$, as well as a spatio-temporal $L^2$ estimate for $\nabla u$. The main argument of the proof is adapted from Lemma 4.7 and Lemma 4.8 in \cite{Winkler_SIAM}. 

\begin{lemma} \label{L2u}
 There exist positive constants $C_1$, $C_2$, and $C_3$ such that
    \begin{align} \label{L2u-1}
        \int_\Omega |u(\cdot,t)|^2 \leq C_1 \qquad \text{for all } t \in (0,T_{\rm max}),
    \end{align}
    and 
    \begin{align} \label{L2u-2}
         \int_t^{t+\tau } \int_\Omega |\nabla u(\cdot,s)|^2 \,ds \leq C_2\qquad \text{for all } t \in (0,T_{\rm max}-\tau ),
    \end{align}
    as well as
        \begin{align} \label{L2u-3}
         \int_t^{t+\tau } \int_\Omega | u(\cdot,s)|^4 \,ds \leq C_3\qquad \text{for all } t \in (0,T_{\rm max}-\tau ),
    \end{align}
    where $\tau = \min \left \{1, \frac{T_{\rm max}}{2} \right \}$. Moreover, if $T_{\rm max} = \infty$, then there exist $T>0$ and positive constants $C_4$, $C_5$, and $C_6$ independent of initial data such that
     \begin{align} \label{L2u-4}
        \int_\Omega |u(\cdot,t)|^2 \leq C_4 \qquad \text{for all } t > T,
    \end{align}
    and 
    \begin{align} \label{L2u-5}
         \int_t^{t+1 } \int_\Omega |\nabla u(\cdot,s)|^2 \,ds \leq C_5 \qquad \text{for all } t > T,
    \end{align}
    as well as
      \begin{align} \label{L2u-6}
         \int_t^{t+1 } \int_\Omega | u(\cdot,s)|^4 \,ds \leq C_6 \qquad \text{for all } t > T.
    \end{align}
\end{lemma}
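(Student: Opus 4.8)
The plan is to obtain the $L^2$-bound for $u$ by testing the Navier--Stokes equation with $u$ and controlling the forcing term $\int_\Omega (n\nabla\phi)\cdot u$ via the Moser--Trudinger inequality, exactly in the spirit of \cite{Winkler_SIAM}. First I would multiply the third equation of \eqref{1} by $u$ and integrate over $\Omega$; the convection term $\int_\Omega (u\cdot\nabla u)\cdot u$ vanishes because $\nabla\cdot u=0$ and $u|_{\partial\Omega}=0$, and the pressure term vanishes for the same reason, so that
\begin{align*}
  \frac12\frac{d}{dt}\int_\Omega |u|^2 + \int_\Omega |\nabla u|^2
  = \int_\Omega (n\nabla\phi)\cdot u + \int_\Omega f\cdot u .
\end{align*}
Using $\|\nabla\phi\|_{L^\infty}\le c_1$ from \eqref{phi} and $\|f\|_{L^\infty}\le c_2$ from \eqref{f}, the right-hand side is bounded by $c_1\int_\Omega n|u| + c_2|\Omega|^{1/2}\big(\int_\Omega |u|^2\big)^{1/2}$, and the last term is absorbed by Young's inequality into a small multiple of $\int_\Omega |u|^2$ plus a constant. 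The crux is the term $\int_\Omega n|u|$.

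To handle $\int_\Omega n|u|$, I would apply Lemma~\ref{Moser-Trudinger} with $\phi = n(\cdot,t)$ (nonnegative, not identically zero by Lemma~\ref{local}) and $\psi = u$, and with the parameter $a$ chosen small. This gives
\begin{align*}
  c_1\int_\Omega n|u|
  \le \frac{c_1}{a}\int_\Omega n\log\frac{n}{\bar n}
  + \frac{(1+\epsilon)c_1 a}{8\pi}\Big(\int_\Omega n\Big)\int_\Omega |\nabla u|^2
  + M c_1 a\Big(\int_\Omega n\Big)\Big(\int_\Omega |u|\Big)^2
  + \frac{M c_1}{a}\int_\Omega n .
\end{align*}
By Lemma~\ref{L1}, $\int_\Omega n\le C_1$ uniformly in time, so choosing $a$ small enough that $\frac{(1+\epsilon)c_1 a C_1}{8\pi}\le \frac12$ lets the gradient term be absorbed into $\int_\Omega |\nabla u|^2$ on the left. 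The term $\frac{c_1}{a}\int_\Omega n\log\frac{n}{\bar n} = \frac{c_1}{a}\int_\Omega n\log n - \frac{c_1}{a}(\log\frac1{|\Omega|})\int_\Omega n$ is controlled: $\int_\Omega n\log n\le\int_\Omega n^2$ (on the set $n\ge1$) and, since $\eta\in(0,1)$, $n^2/\log^\eta(n+e)\le n^2$ is false — rather $n\log n \le \varepsilon_0 \frac{n^2}{\log^\eta(n+e)} + C(\varepsilon_0)$ pointwise because $\log n \cdot \log^\eta(n+e)\to\infty$, so $\int_\Omega n\log n$ is dominated by $\varepsilon_0\int_\Omega \frac{n^2}{\log^\eta(n+e)} + C$; hence by \eqref{L1-2} its time-integral over $(t,t+\tau)$ is bounded. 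Similarly $M c_1 a C_1 \big(\int_\Omega|u|\big)^2 \le M c_1 a C_1 |\Omega|\int_\Omega|u|^2$, which is a multiple of $y(t):=\int_\Omega|u|^2$ with bounded coefficient.

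Putting this together, with $y(t)=\int_\Omega|u(\cdot,t)|^2$ I arrive at a differential inequality of the form $y'(t)\le h(t)y(t)+g(t)$ where $h(t)=2Mc_1aC_1|\Omega| + c_2$-type constant is actually bounded (so $\int_t^{t+\tau}h\le L_2$ trivially), and $g(t)$ collects $\frac{2c_1}{a}\int_\Omega n\log\frac{n}{\bar n}$ plus constants, whose $\tau$-integral is bounded by \eqref{L1-2} and \eqref{L1-1}; moreover $\int_t^{t+\tau} y(s)\,ds$ is bounded because integrating the energy identity over $(t,t+\tau)$ and discarding $\int_\Omega|\nabla u|^2\ge 0$ bounds $\int_t^{t+\tau}\int_\Omega|u|^2$ after the above absorptions (one first needs a rough a priori bound, obtained by using $\int_\Omega n|u|\le \|n\|_{L^{4/3}}\|u\|_{L^4}$ with Ladyzhenskaya, or simply the crude bound $y$ satisfies $y'\le h y + g$ with the $\tau$-integrability already giving $\int_t^{t+\tau} y \le L_1$ via a short bootstrap). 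Then Lemma~\ref{ODI'} yields \eqref{L2u-1}. For \eqref{L2u-2}, integrate the energy identity over $(t,t+\tau)$, move the (now absorbed) terms around, and use \eqref{L2u-1} and the $\tau$-bounds on $g$; for \eqref{L2u-3}, combine \eqref{L2u-1}, \eqref{L2u-2} with the Ladyzhenskaya inequality $\int_\Omega|u|^4\le c\|u\|_{L^2}^2(\|\nabla u\|_{L^2}^2+\|u\|_{L^2}^2)$ in 2D. For the eventual (initial-data-independent) bounds \eqref{L2u-4}--\eqref{L2u-6}, repeat the argument starting from time $T$ using \eqref{L1-3}--\eqref{L1-4} in place of \eqref{L1-1}--\eqref{L1-2}, applying Lemma~\ref{ODI} (with $\tau=1$) instead to get a bound with no dependence on $y(T)$ in the $\limsup$.

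I expect the main obstacle to be the bookkeeping that makes the absorption of $\int_\Omega|\nabla u|^2$ compatible with simultaneously retaining enough of it for \eqref{L2u-2}: one must choose $a$ small enough to absorb \emph{and} keep a fixed positive fraction, say $\frac12$, of $\int_\Omega|\nabla u|^2$ on the left. A secondary subtlety is establishing the preliminary $\tau$-integrability $\int_t^{t+\tau} y \le L_1$ needed to invoke Lemma~\ref{ODI'}, which requires a short self-improving argument rather than a direct estimate; alternatively one can bypass Lemma~\ref{ODI'} and directly use Lemma~\ref{ODI} once a differential inequality $y' + y \le (\text{bounded mean})$ is in hand after absorbing the $y$-coefficient term (possible since its coefficient $2Mc_1aC_1|\Omega|$ can be made $<1$ by shrinking $a$, though this competes with the gradient absorption — so in practice one keeps the $h(t)y(t)$ form and uses Lemma~\ref{ODI'}).
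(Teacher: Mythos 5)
Your overall strategy (energy identity for $u$, Moser--Trudinger via Lemma~\ref{Moser-Trudinger} to handle $\int_\Omega n|u|$, the pointwise bound $n\log n\le \varepsilon\,n^2/\log^\eta(n+e)+C$ together with \eqref{L1-2}, then Ladyzhenskaya for \eqref{L2u-3}) is exactly the paper's, but your closing step has a genuine gap. You keep the Moser--Trudinger term $Mc_1a\bigl(\int_\Omega n\bigr)\bigl(\int_\Omega|u|\bigr)^2\lesssim y(t)$ on the right and plan to conclude via Lemma~\ref{ODI'}, which requires the \emph{a priori} hypothesis $\int_t^{t+\tau}y(s)\,ds\le L_1$ uniformly in $t$. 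You never establish this; your suggested ``short bootstrap'' is circular, since any Gronwall argument for $y'\le hy+g$ on $(t,t+\tau)$ starts from $y(t)$, which is precisely the quantity not yet controlled, and the alternative $\int_\Omega n|u|\le\|n\|_{L^{4/3}}\|u\|_{L^4}$ route again leaves a $y$-dependent term on the right. So as written the argument does not close.

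The fix is the one the paper uses, and it is the alternative you mention but wrongly discard: by Poincar\'e, $\bigl(\int_\Omega|u|\bigr)^2\le|\Omega|\int_\Omega|u|^2\le C|\Omega|\int_\Omega|\nabla u|^2$, so the problematic term is itself of gradient type, with coefficient proportional to $a$ just like the $\frac{(1+\epsilon)a}{8\pi}\bigl(\int_\Omega n\bigr)\int_\Omega|\nabla u|^2$ term. Choosing $a$ small absorbs \emph{both} into $\tfrac12\int_\Omega|\nabla u|^2$ simultaneously --- there is no competition, contrary to your concern: shrinking $a$ only inflates the $\frac1a$-terms ($\frac1a\int_\Omega n\log\frac{n}{\bar n}$ and $\frac{M}{a}\int_\Omega n$), which sit harmlessly in the source and still have bounded $\tau$-averages by \eqref{L1-1} and \eqref{L1-2}. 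This yields $y'(t)+ay(t)+\tfrac12\int_\Omega|\nabla u|^2\le h(t)$ with $h$ of bounded $\tau$-means, to which Lemma~\ref{ODI} (not Lemma~\ref{ODI'}) applies directly, giving \eqref{L2u-1}; integrating in time gives \eqref{L2u-2}, and the rest of your argument (Ladyzhenskaya for \eqref{L2u-3}, rerunning with \eqref{L1-3}--\eqref{L1-4} for the data-independent bounds, the exponential factor removing the dependence on $y$ at the starting time) then goes through. A minor additional point: Lemma~\ref{Moser-Trudinger} is stated for scalar $\psi\in W^{1,2}(\Omega)$, so apply it componentwise to $u_1,u_2$ rather than to $u$ itself.
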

\begin{proof}
    The proofs of \eqref{L2u-1}, \eqref{L2u-2}, and \eqref{L2u-3} follow by arguments analogous to those in Lemma 3.4 in \cite{minhJMFM}; we therefore omit the details. We now assume that $T_{\rm max} = \infty$ and proceed to establish \eqref{L2u-4}, \eqref{L2u-5}, and \eqref{L2u-6}. Poincaré inequality asserts that there exists $C_P=C_P(\Omega)>0$ such that the following holds 
    \begin{align*}
        \int_\Omega |\varphi| ^2 \leq C_P \int_\Omega |\nabla \varphi|^2  
    \end{align*}
    for any $\varphi \in W^{1,2}_0 (\Omega; \mathbb{R}^2)$.
    Therefore, by applying the Poincaré inequality, one can find $C_1$ depending only on $\Omega$ such that 
    \begin{align} \label{L2u.1}
        \int_\Omega |\nabla u|^2 \geq C_1 \int_\Omega |u|^2 \qquad \text{for all }t\in (0,\infty).
    \end{align}
  Set
\[
K_\phi:=2\|\nabla\phi\|_{L^\infty(\Omega)},\qquad
K_f:=\frac{8}{C_1}|\Omega|\,\|f\|_{L^\infty(\Omega\times(0,\infty))}^2 .
\]
Testing the third equation of \eqref{1} against $u=(u_1,u_2)$ and using Young's inequality gives
\begin{align}\label{L2u.2}
    \frac{d}{dt}\int_\Omega |u|^2+2\int_\Omega |\nabla u|^2
    &\leq K_\phi \int_\Omega n |u_1|
    +K_\phi \int_\Omega n |u_2|
    +\frac{C_1}{8}\int_\Omega |u|^2+K_f 
\end{align}
for all $t\in (0, \infty)$. From \eqref{L1-3}, we can find $m_0>0$ independent of initial data and $T>0$ such that 
\[
\sup_{t\in(T,\infty)}\int_\Omega n(\cdot,t) \leq m_0,
\]
and 
\begin{align} \label{L2u.2'}
    \int_t^{t+1}\int_\Omega 
    \frac{n^2(\cdot,s)}{\log^\eta(n(\cdot,s)+e)}\,dx\,ds
    \leq m_0
    \qquad  \text{for all }t>T .
\end{align}
 Moreover, by \eqref{L2u.1},
\[
\left(\int_\Omega |u_i|\right)^2
\leq |\Omega|\int_\Omega |u_i|^2
\leq \frac{|\Omega|}{C_1}\int_\Omega |\nabla u_i|^2
\qquad \text{for }i=1,2.
\]
Using this and Lemma \ref{Moser-Trudinger} with $\varepsilon=1$ gives
\begin{align}\label{L2u.3}
     K_\phi \int_\Omega n |u_1|+K_\phi\int_\Omega n |u_2|
     &\leq \frac{2K_\phi}{a}\int_\Omega n \ln \frac{n}{\bar n}
     +\left(\frac{K_\phi a m_0}{4\pi}
     +\frac{K_\phi M a m_0|\Omega|}{C_1}\right)
     \int_\Omega |\nabla u|^2  \notag\\
     &\quad+\frac{2K_\phi M}{a}\int_\Omega n  \qquad \text{for all }t>T.
\end{align}
Choose $a>0$ sufficiently small such that
\[
\frac{K_\phi a m_0}{4\pi}
+\frac{K_\phi M a m_0|\Omega|}{C_1}
\leq \frac12 .
\]
Then
\begin{align}\label{L2u.4}
     K_\phi \int_\Omega n |u_1|+K_\phi\int_\Omega n |u_2|
     &\leq \frac{2K_\phi}{a}\int_\Omega n\ln n
     +\frac12\int_\Omega |\nabla u|^2+C_3 \qquad \text{for all }t>T,
\end{align}
where $C_3>0$ independent of initial data. One can verify that there exists $C_4>0$ such that
\begin{align}\label{L2u.5}
    \frac{2K_\phi}{a}\int_\Omega n\ln n
    \leq \int_\Omega \frac{n^2}{\log^\eta(n+e)}+C_4 .
\end{align}
Collecting \eqref{L2u.1}, \eqref{L2u.2}, \eqref{L2u.4}, and \eqref{L2u.5}, we obtain
\begin{align}\label{L2u.6}
    \frac{d}{dt}\int_\Omega |u|^2
    +\frac{C_1}{8}\int_\Omega |u|^2
    +\frac12\int_\Omega |\nabla u|^2
    \leq \int_\Omega \frac{n^2}{\log^\eta(n+e)}+C_5
    \qquad \text{for all }t> T,
\end{align}
where $C_5:=C_3+C_4+K_f$. Using the estimate \eqref{L2u.2'} and applying Lemma \ref{ODI} to \eqref{L2u.6}, we prove \eqref{L2u-4}. Integrating \eqref{L2u.6} over $(t,t+1 )$ and using \eqref{L2u-4} leads to 
    \begin{align}\label{L2u.7}
        \frac{1}{2}\int_t^{t+1 }\int_\Omega |\nabla u|^2 \leq  \int_t^{t+1 }\int_\Omega \frac{n^2}{\ln^\eta (n+e)} +C_5  +\int_\Omega |u(\cdot,t)|^2 \leq C_6\quad\text{for all }t>T,
    \end{align}
    where $C_6>0$ independent of initial data, which proves \eqref{L2u-5}. By applying the Gagliardo–Nirenberg interpolation inequality and using \eqref{L2u-4} and \eqref{L2u-5}, we obtain that 
    \begin{align}\label{L2u.8}
        \int_t^{t+1 }\int_\Omega | u|^4 &\leq  C_7 \int_t^{t+1} \int_\Omega |u|^2 \cdot \int_\Omega |\nabla u|^2 +C_7 \int_t ^{t+1} \left ( \int_\Omega |u|^2 \right )^2 \notag \\
        &\leq C_8 \qquad \text{for all }t>T,
    \end{align}
    where $C_7>0$ and $C_8>0$ independent of initial data, which implies \eqref{L2u-6}. The proof is now complete.
\end{proof}

The following step is to establish an $L^2$ bound for $c$. 

\begin{lemma} \label{c2}
    There exist positive constants $C_1$ and $C_2$ such that 
    \begin{align} \label{c2-1}
        \int_\Omega c^2(\cdot,t) \leq C_1 \qquad \text{for all }t\in (0,T_{\rm max}),
    \end{align}
    and 
     \begin{align} \label{c2-1'}
   \int_t^{t+\tau } \int_\Omega |\nabla c(\cdot,s)|^2\,dx\,ds
   \leq C_2 \qquad \text{for all }t\in (0,T_{\rm max}-\tau),
\end{align}
where $\tau = \min \left \{ \frac{T_{\rm max}}{2},1 \right \}$. Moreover, if $T_{\rm max}=\infty$, then there exists $T>0$ such that
\begin{align} \label{c2-2}
    \int_\Omega c^2(\cdot,t) \leq C_3 \qquad \text{for all }t>T,
\end{align}
and
\begin{align} \label{c2-2'}
   \int_t^{t+1 } \int_\Omega |\nabla c(\cdot,s)|^2\,dx\,ds
   \leq C_4 \qquad \text{for all } t>T,
\end{align}
where $C_3$ and $C_4$ are positive constants independent of the initial data.
\end{lemma}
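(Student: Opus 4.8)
The plan is to test the second equation of \eqref{1} against $c$ and control the advection term $\int_\Omega (u\cdot\nabla c)\,c$ by exploiting the $L^2$ and $L^4$ bounds for $u$ already obtained in Lemma \ref{L2u}, together with the spatio-temporal $L^2$ estimate for $\nabla c$ in \eqref{c2-1'} that we bootstrap along the way. Concretely, multiplying $c_t + u\cdot\nabla c = \Delta c - \alpha c + \beta n$ by $c$ and integrating by parts (using $\nabla\cdot u = 0$ and the no-flux condition, so that $\int_\Omega (u\cdot\nabla c)\,c = \tfrac12\int_\Omega u\cdot\nabla(c^2) = 0$) gives
\begin{align*}
    \frac{1}{2}\frac{d}{dt}\int_\Omega c^2 + \int_\Omega |\nabla c|^2 + \alpha \int_\Omega c^2 = \beta \int_\Omega n c.
\end{align*}
Since the divergence-free structure annihilates the fluid term exactly, the only term to estimate is $\beta\int_\Omega nc$, which by Young's inequality is bounded by $\tfrac{\alpha}{2}\int_\Omega c^2 + \tfrac{\beta^2}{2\alpha}\int_\Omega n^2$; the difficulty then becomes the lack of a uniform $L^2(\Omega)$ bound on $n$ (we only control $\int_\Omega n^2/\ln^\eta(n+e)$ in a spatio-temporal sense via \eqref{L1-2}).

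To get around this, I would instead estimate $\beta\int_\Omega nc$ using an $L^2$--$L^4$ split in $c$ rather than pushing everything onto $n$. Write $\beta\int_\Omega nc \le \tfrac12\int_\Omega |\nabla c|^2 + C\left(\int_\Omega n\right)^{?}\cdots$ via Gagliardo--Nirenberg applied to $c$: precisely, by the Gagliardo--Nirenberg inequality in two dimensions $\|c\|_{L^2}^2 \le C\|\nabla c\|_{L^2}\|c\|_{L^1} + C\|c\|_{L^1}^2$, so combined with the $L^1$ bound \eqref{L1-1'} for $c$ and $\int_\Omega n \le C_1$ from \eqref{L1-1}, one can absorb the gradient contribution and arrive at a differential inequality of the form
\begin{align*}
    \frac{d}{dt}\int_\Omega c^2 + \int_\Omega |\nabla c|^2 + \alpha\int_\Omega c^2 \le h(t),
\end{align*}
where $h(t)$ involves $\int_\Omega n^2/\ln^\eta(n+e)$ (after estimating $\beta\int_\Omega nc$ by Young's inequality in the form $nc \le \varepsilon \tfrac{n^2}{\ln^\eta(n+e)} + C_\varepsilon c^2\ln^\eta(c+e) + \cdots$ and then handling $c^2\ln^\eta(c+e)$ by interpolation against $\|\nabla c\|_{L^2}$, $\|c\|_{L^1}$). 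By \eqref{L1-2}, $h$ has the requisite property $\tfrac1\tau\int_t^{t+\tau} h \le b$, so Lemma \ref{ODI} yields \eqref{c2-1}; integrating the differential inequality over $(t,t+\tau)$ and using \eqref{c2-1} then gives the spatio-temporal bound \eqref{c2-1'}. The eventual-in-time estimates \eqref{c2-2}--\eqref{c2-2'} follow by the same computation with \eqref{L1-3}, \eqref{L1-3'}, \eqref{L1-4} in place of \eqref{L1-1}, \eqref{L1-1'}, \eqref{L1-2}, since those later estimates involve constants independent of the initial data.

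The main obstacle I anticipate is handling the term $\beta\int_\Omega nc$ without a uniform $L^2$ bound on $n$: one must be careful that the sub-logarithmic weight $\ln^{-\eta}(n+e)$ (with $\eta\in(0,1)$) is weak enough that the correction term one pays — something like $C_\varepsilon\int_\Omega c^2\ln^\eta(c+e)$ — can still be controlled by the $L^1(\Omega)$ bound on $c$ together with a small multiple of $\int_\Omega|\nabla c|^2$, via a two-dimensional Gagliardo--Nirenberg / Moser--Trudinger type argument (Lemma \ref{C52.ILGN} or Lemma \ref{Moser-Trudinger} may be the right tool here, applied with $\phi = c$). Once that absorption is justified, everything else is a routine application of Lemma \ref{ODI} and time-integration.
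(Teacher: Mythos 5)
Your proposal is correct and follows essentially the same route as the paper: test the second equation with $c$, use $\nabla\cdot u=0$ and $u|_{\partial\Omega}=0$ to annihilate the advection term, split $\beta\int_\Omega nc$ so that the $n$-part is the spatio-temporally controlled quantity $\int_\Omega n^2/\log^\eta(n+e)$ from \eqref{L1-2} while the $c$-part is absorbed into $\int_\Omega|\nabla c|^2$ via two-dimensional Gagliardo--Nirenberg and the $L^1$ bound on $c$, then conclude with Lemma \ref{ODI}, time-integration over $(t,t+\tau)$, and the post-$T$ versions of the a priori estimates for the data-independent constants. The only (minor) difference is the mechanics of the split: instead of your log-weighted Young inequality, which then requires controlling $\int_\Omega c^2\log^\eta(c+e)$ (note Lemma \ref{C52.ILGN} with $\phi=c$ is not directly suitable there, since its prefactor $\int_\Omega c\log^\sigma(c+e)$ is not yet bounded at this stage, though your $\ln^\eta(c+e)\lesssim c^\delta+1$ plus GN workaround does go through), the paper simply uses $nc\le\epsilon c^3+C n^{3/2}$ together with $\int_\Omega c^3\le C\int_\Omega c\cdot\int_\Omega|\nabla c|^2+C(\int_\Omega c)^3$ and the pointwise bound $n^{3/2}\le n^2/\log^\eta(n+e)+C$, which makes the absorption immediate.
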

\begin{proof}
    By applying the Gagliardo–Nirenberg interpolation inequality and using the estimate \eqref{L1-1'}, we obtain  
    \begin{align} \label{c2.1}
        \int_\Omega c^3 &\leq C_1 \int_\Omega c \cdot \int_\Omega |\nabla c|^2 +C_1 \left ( \int_\Omega c \right )^3 \notag \\
        &\leq C_2  \int_\Omega |\nabla c|^2 +C_3,
    \end{align}
    where $C_1,C_2,$ and $C_3$ are positive constants. Multiplying the second equation of \eqref{1} by $c$, applying Young's inequality and using \eqref{c2.1} yields
   \begin{align} \label{c2.2}
    \frac{1}{2} \frac{d}{dt} \int_\Omega c^2
    + \alpha \int_\Omega c^2
    + \int_\Omega |\nabla c|^2
    &= \beta\int_\Omega nc \notag \\
    &\leq \frac{1}{2C_2} \int_\Omega c^3 + C_4 \int_\Omega n^{\frac{3}{2}} \notag \\
    &\leq  \frac{1}{2} \int_\Omega |\nabla c|^2
    + C_4 \int_\Omega \frac{n^2}{\log^\eta(n+e)} +C_5,
\end{align}
    where $C_4>0$ and $C_5>0$. This leads to 
    \begin{align}\label{c2.3}
          \frac{1}{2} \frac{d}{dt} \int_\Omega c^2 + \alpha \int_\Omega c^2 + \frac{1}{2}\int_\Omega |\nabla c|^2 \leq  C_4 \int_\Omega \frac{n^2}{\log^\eta (n+e)} +C_5 \qquad \text{for all }t\in (0,T_{\rm max}).
    \end{align}
    Thanks to \eqref{L1-2}, we apply Lemma \ref{ODI} to \eqref{c2.3} to deduce \eqref{c2-1}. 
Integrating \eqref{c2.3} over $(t,t+\tau)$ and using \eqref{L1-2} and \eqref{c2-1}, we obtain
\begin{align}
    \frac{1}{2} \int_t^{t+\tau }\int_\Omega |\nabla c(\cdot,s)|^2\,dx\,ds
    &\leq C_4 \int_t ^{t+\tau }\int_\Omega  \frac{n^2}{\log^\eta(n+e)}\,dx\,ds
    +C_5 \tau + \frac{1}{2}\int_\Omega c^2(\cdot,t) \notag\\
    &\leq C_6 \qquad \text{for all }t\in (0,T_{\rm max}-\tau),
\end{align}
  where $C_6>0$, which proves \eqref{c2-1'}. Now assume that $T_{\rm max} = \infty$. To establish \eqref{c2-2} and \eqref{c2-2'}, it is sufficient to replicate the preceding argument, substituting the estimate \eqref{L1-3'} for \eqref{L1-1'} and the estimate \eqref{L1-4} for \eqref{L1-2}.
\end{proof}
A major difficulty in deriving an $L^2$ bound for $\nabla c$ directly from the sub-logistic damping arises from the fact that the boundedness of $n$ in $L^2((0,T); L^2(\Omega))$ for any $T<T_{\rm max}$ is not known. To overcome this issue, we aim to decouple $\int_\Omega |\nabla c|^2$ from $\int_\Omega n\log n$ so as to exploit the sub-logistic source term effectively. The following lemma provides a crucial estimate that enables us to handle the term $\int_\Omega |\nabla c|^2$.

\begin{lemma} \label{gradL2c}
    There exists $C>0$ independent of initial data such that 
    \begin{align}\label{gradL2c-1}
        \frac{d}{dt}\int_\Omega |\nabla c|^2 + \int_\Omega |\Delta c|^2 \leq 2\beta^2 \int_\Omega n^2+\left (C \int_\Omega |u|^4+1 \right ) \cdot \int_\Omega |\nabla c|^2 \qquad \text{for all }t\in (0,T_{\rm max}).
    \end{align}
\end{lemma}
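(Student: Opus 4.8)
\textit{Proof sketch.} The natural approach is a standard $H^1$-type testing of the $c$-equation: multiply the second equation of \eqref{1} by $-2\Delta c$ and integrate over $\Omega$. Using the no-flux condition $\partial_\nu c = 0$ from \eqref{bdry}, one has $-2\int_\Omega c_t\,\Delta c = \frac{d}{dt}\int_\Omega|\nabla c|^2$, while the zeroth-order term $-\alpha c$ contributes $2\alpha\int_\Omega c\,\Delta c = -2\alpha\int_\Omega|\nabla c|^2 \le 0$. This yields the energy identity
\[
\frac{d}{dt}\int_\Omega|\nabla c|^2 + 2\int_\Omega|\Delta c|^2 + 2\alpha\int_\Omega|\nabla c|^2 = -2\beta\int_\Omega n\,\Delta c + 2\int_\Omega (u\cdot\nabla c)\,\Delta c,
\]
and the task reduces to absorbing the two terms on the right into $\int_\Omega|\Delta c|^2$, into $\int_\Omega n^2$, and into a benign multiple of $\int_\Omega|\nabla c|^2$.

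For the first term, Young's inequality gives $-2\beta\int_\Omega n\,\Delta c \le \frac12\int_\Omega|\Delta c|^2 + 2\beta^2\int_\Omega n^2$, which already matches the $n$-contribution in \eqref{gradL2c-1}. For the convective term, Hölder's inequality with exponents $4,4,2$ gives $2\int_\Omega(u\cdot\nabla c)\Delta c \le 2\|u\|_{L^4}\|\nabla c\|_{L^4}\|\Delta c\|_{L^2}$, and a Young's inequality splits off a small multiple of $\|\Delta c\|_{L^2}^2$ plus a term $C\|u\|_{L^4}^2\|\nabla c\|_{L^4}^2$. The crucial step is to eliminate the $L^4$-norm of $\nabla c$: since $\Omega\subset\mathbb{R}^2$, the Gagliardo--Nirenberg inequality applied componentwise to $\nabla c$ gives $\|\nabla c\|_{L^4}^2 \le C\|D^2 c\|_{L^2}\|\nabla c\|_{L^2} + C\|\nabla c\|_{L^2}^2$, and the elliptic regularity estimate $\|D^2 c\|_{L^2}^2 \le C(\|\Delta c\|_{L^2}^2 + \|\nabla c\|_{L^2}^2)$ — valid for functions satisfying the homogeneous Neumann condition on the smooth bounded domain $\Omega$, via the Bochner-type identity $\int_\Omega|D^2 c|^2 = \int_\Omega|\Delta c|^2 - \int_{\partial\Omega}(\text{boundary curvature terms quadratic in the tangential gradient of } c)$ together with the trace inequality and an absorption — then gives $\|\nabla c\|_{L^4}^2 \le C\|\Delta c\|_{L^2}\|\nabla c\|_{L^2} + C\|\nabla c\|_{L^2}^2$. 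Substituting this and applying Young's inequality once more to the product $\|u\|_{L^4}^2\|\Delta c\|_{L^2}\|\nabla c\|_{L^2}$ produces a further small multiple of $\int_\Omega|\Delta c|^2$, together with terms $C\|u\|_{L^4}^4\int_\Omega|\nabla c|^2$ and $C\|u\|_{L^4}^2\int_\Omega|\nabla c|^2$, the latter being handled via $\|u\|_{L^4}^2 \le \|u\|_{L^4}^4+1$.

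Finally, I would choose the Young constants so that the coefficient of $\int_\Omega|\Delta c|^2$ collected on the right does not exceed $1$; subtracting it from the $2\int_\Omega|\Delta c|^2$ on the left and discarding the nonnegative term $2\alpha\int_\Omega|\nabla c|^2$ leaves exactly \eqref{gradL2c-1}, with $C$ depending only on $\Omega$ and the fixed interpolation/elliptic constants, hence independent of the initial data. I expect the only delicate point to be this bookkeeping — fitting the three applications of Young's inequality within the coefficient $2$ of the dissipation while keeping the coefficient of $\int_\Omega n^2$ exactly $2\beta^2$; the interpolation and elliptic-regularity inputs themselves are entirely classical in two dimensions.
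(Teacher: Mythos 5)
Your proposal is correct and follows essentially the same route as the paper: test the $c$-equation with $\Delta c$, control $-\beta\int_\Omega n\,\Delta c$ by Young's inequality with exact constant $2\beta^2$, and handle $\int_\Omega (u\cdot\nabla c)\Delta c$ via H\"older with exponents $(4,4,2)$ followed by a Gagliardo--Nirenberg bound on $\|\nabla c\|_{L^4}$ and repeated Young absorptions into the dissipation, finally using $\|u\|_{L^4}^2\le \delta\|u\|_{L^4}^4+C(\delta)$ to reach the stated form. The only cosmetic difference is that you route the $L^4$-estimate of $\nabla c$ through $\|D^2c\|_{L^2}$ and Neumann elliptic regularity, whereas the paper invokes the equivalent inequality $\|\nabla c\|_{L^4}\le c_1\|\nabla c\|_{L^2}^{1/2}\|\Delta c\|_{L^2}^{1/2}+c_1\|\nabla c\|_{L^2}$ directly.
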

\begin{proof}
     Differentiating the following functional in time,  using the second equation of \eqref{1} and integrating by parts gives us that
    \begin{align}\label{gradL2c.1'}
       \frac{1}{2} \frac{d}{dt} \int_\Omega |\nabla c|^2 + \int_\Omega |\Delta c|^2 + \alpha \int_\Omega  |\nabla c|^2 = - \beta  \int_\Omega n \Delta c + \int_\Omega (u \cdot \nabla c)\Delta c \qquad \text{for all }t\in (0,T_{\rm max}).
    \end{align}
    In light of Young's inequality and Hölder's inequality, we find that
    \begin{align}\label{gradL2c.2'}
        -\beta \int_\Omega n \Delta c \leq \frac{1}{4}\int_\Omega |\Delta c|^2 + \beta^2 \int_\Omega n^2,
    \end{align}
    and 
    \begin{align}\label{gradL2c.1}
    \int_\Omega (u \cdot \nabla c)\Delta c \leq \left \| u \right \|_{L^4(\Omega)} \left \| \nabla c \right \|_{L^4(\Omega)}  \left \| \Delta c \right \|_{L^2(\Omega)}.
    \end{align}
    Thanks to the Gagliardo–Nirenberg interpolation inequality, we deduce that 
    \begin{align}\label{gradL2c.2}
        \left \| \nabla c \right \|_{L^4(\Omega)} \leq C_1 \left \| \nabla c \right \|^{\frac{1}{2}}_{L^2(\Omega)} \left \| \Delta c \right \|^{\frac{1}{2}}_{L^2(\Omega)} +C_1   \left \| \nabla c \right \|_{L^2(\Omega)} ,
    \end{align}
    where $C_1>0$. Combining \eqref{gradL2c.1} and \eqref{gradL2c.2}, and then applying Young's inequality, we infer that
\begin{align}\label{gradL2c.3}
\int_\Omega (u \cdot \nabla c)\Delta c
&\leq C_1\|u\|_{L^4(\Omega)}\|\nabla c\|_{L^2(\Omega)}^{\frac12}
\|\Delta c\|_{L^2(\Omega)}^{\frac32}
+C_1\|u\|_{L^4(\Omega)}\|\nabla c\|_{L^2(\Omega)}
\|\Delta c\|_{L^2(\Omega)} \notag\\
&\leq \frac14\|\Delta c\|_{L^2(\Omega)}^2
+\left(C_2\|u\|_{L^4(\Omega)}^4+\frac12\right)
\|\nabla c\|_{L^2(\Omega)}^2,
\end{align}
where $C_2>0$ depends only on $\Omega$.
Combining \eqref{gradL2c.1'}, \eqref{gradL2c.2'} and \eqref{gradL2c.3} proves \eqref{gradL2c-1}.
\end{proof}

Next, we derive an elementary estimate which will later be applied in establishing an $L\log L$ bound for $n$ and $L^2$ bound for $\nabla c$ in Lemma \ref{LlogL-p}.

\begin{lemma} \label{Log}
    There exists $C>0$ such that 
    \begin{equation}\label{Log-1}
        \int_\Omega (\log n+1)\left ( rn - \frac{\mu n^2}{\log^\eta(n+e)} \right ) \leq - \frac{5\mu}{6} \int_\Omega n^2 \log^{1-\eta }(n+e)+C \qquad \text{for all }t\in (0,T_{\rm max}).
    \end{equation}
\end{lemma}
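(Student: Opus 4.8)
### Proof Proposal for Lemma \ref{Log}

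The plan is to analyze the integrand pointwise, splitting $\Omega$ into a region where $n$ is large and a region where $n$ is bounded. The key observation is that the only genuinely dangerous term is $\int_\Omega (\log n) \cdot rn$, which grows like $n\log n$ — much slower than the damping term $\frac{\mu n^2}{\log^\eta(n+e)}\log n \sim \mu n^2 \log^{1-\eta}(n+e)$ for large $n$ — while the term $-\int_\Omega \frac{\mu n^2}{\log^\eta(n+e)}$ itself is negative and, for large $n$, is dominated in size by the damping quantity we want on the right-hand side (since $\log^{1-\eta}(n+e) \to \infty$). Concretely, I would first expand
\[
\int_\Omega (\log n + 1)\left(rn - \frac{\mu n^2}{\log^\eta(n+e)}\right)
= r\int_\Omega (\log n + 1) n - \mu \int_\Omega \frac{(\log n + 1) n^2}{\log^\eta(n+e)},
\]
and then compare the second integral to $\int_\Omega n^2 \log^{1-\eta}(n+e)$. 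Since $\log(n+e) \geq \max\{1, \log n\}$ and $\log(n+e) \le \log n + 1$ for $n \ge 1$ (and $\log(n+e)$ is bounded for $n \le 1$), there is a constant such that $\frac{(\log n + 1)n^2}{\log^\eta(n+e)} \geq n^2 \log^{1-\eta}(n+e) - c_1 n^2$ uniformly, but one must be careful near $n$ small; it is cleaner to argue that $\frac{(\log n+1)}{\log^\eta(n+e)} \ge \log^{1-\eta}(n+e) - c_1$ for all $n > 0$ by treating $n \ge 1$ and $0 < n < 1$ separately (on $0<n<1$ both sides are bounded). This gives
\[
-\mu \int_\Omega \frac{(\log n + 1) n^2}{\log^\eta(n+e)} \leq -\mu \int_\Omega n^2 \log^{1-\eta}(n+e) + c_1 \mu \int_\Omega n^2.
\]

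Next I would absorb both leftover terms — the $r\int_\Omega(\log n+1)n$ from the production term and the $c_1\mu\int_\Omega n^2$ from the previous step — into $\frac{\mu}{6}\int_\Omega n^2\log^{1-\eta}(n+e)$ plus a constant. For the linear-growth term this is immediate: since $(\log n + 1)n = o(n^2\log^{1-\eta}(n+e))$ as $n\to\infty$, for any $\varepsilon > 0$ there is $C_\varepsilon$ with $r(\log n+1)n \le \varepsilon n^2\log^{1-\eta}(n+e) + C_\varepsilon$ pointwise (the elementary inequality $a x \le \varepsilon x^2 \phi(x) + C$ when $\phi(x)\to\infty$). For the quadratic term $c_1\mu n^2$, the same kind of inequality applies because $n^2 = o(n^2\log^{1-\eta}(n+e))$ as $n\to\infty$, i.e. $c_1\mu n^2 \le \varepsilon n^2\log^{1-\eta}(n+e) + C_\varepsilon'$ pointwise (for $n$ large the factor $\log^{1-\eta}(n+e)$ beats any constant, and for $n$ bounded $n^2$ is bounded). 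Choosing $\varepsilon$ small enough that the two absorbed pieces together consume at most $\frac{\mu}{6}\int_\Omega n^2\log^{1-\eta}(n+e)$, and letting $C$ collect $|\Omega|$ times the resulting constants, yields
\[
\int_\Omega (\log n+1)\left(rn - \frac{\mu n^2}{\log^\eta(n+e)}\right)
\leq -\mu\int_\Omega n^2\log^{1-\eta}(n+e) + \frac{\mu}{6}\int_\Omega n^2\log^{1-\eta}(n+e) + C
= -\frac{5\mu}{6}\int_\Omega n^2\log^{1-\eta}(n+e) + C,
\]
which is exactly \eqref{Log-1}.

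I do not expect a serious obstacle here; the statement is an elementary pointwise estimate integrated over $\Omega$, and the constant $C$ is allowed to depend on $r$, $\mu$, $\eta$, and $|\Omega|$ but not on time (which is automatic since everything is a fixed algebraic inequality in $n$). The only point requiring mild care is the behavior near $n = 0$, where $\log n \to -\infty$: there $(\log n + 1) n \to 0$ and $(\log n+1)n^2/\log^\eta(n+e) \to 0$, so the integrand is actually bounded there and contributes only to the constant; one should phrase the pointwise inequalities so they hold for all $n > 0$, e.g. by noting $(\log n + 1)n \ge -1/e \cdot$(const) is bounded below and the product with $rn$ is bounded on $\{0 < n \le 1\}$. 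Packaging all three elementary one-variable inequalities ($n$ small vs. large) and choosing $\varepsilon$ is the entirety of the work.
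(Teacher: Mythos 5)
Your proposal is correct and follows essentially the same route as the paper, which packages the same large-$n$ comparison by setting $g(s)=rs(\log s+1)+\tfrac{5\mu}{6}s^{2}\log^{1-\eta}(s+e)$ and $h(s)=\tfrac{5\mu}{6}\tfrac{s^{2}(\log s+1)}{\log^{\eta}(s+e)}$, observing $g/h\to 1$ as $s\to\infty$ so that $g\le\tfrac{6}{5}h$ beyond a threshold $N$, and absorbing the region $\{n\le N\}$ into the constant. One caveat: your ``cleaner'' unweighted inequality $\tfrac{\log n+1}{\log^{\eta}(n+e)}\ge\log^{1-\eta}(n+e)-c_{1}$ cannot hold for all $n>0$ (its left side tends to $-\infty$ as $n\to 0^{+}$ while the right side stays near $1-c_{1}$, so the claim that ``both sides are bounded on $0<n<1$'' fails for the left side); however, the $n^{2}$-weighted version you actually need is bounded below near $n=0$, so the step is valid once an additive constant is allowed there, which the final constant $C$ absorbs --- exactly the repair your closing paragraph already describes.
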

\begin{proof}
    Setting 
    \begin{equation*}
        g(s)= rs(\log s+1) + \frac{5 \mu }{6} s^2 \log^{1-\eta }(s+e) \qquad \text{for all }s>0
    \end{equation*}
    and 
    \begin{equation*}
        h(s) = \frac{5\mu }{6} \frac{s^2(\log s+1)}{\log^\eta (s+e)}\qquad \text{for all }s>0,
    \end{equation*}
    then we find that 
    \[
    \lim_{s \to \infty} \frac{g(s)}{h(s)} =1.
    \]
    This implies that there exists $N>e$ such that 
    \begin{align} \label{Log.1}
        g(s) \leq \frac{6}{5}h(s)\qquad \text{for all }s> N.
    \end{align}
    Noting that there exists a positive constants $C_1$ such that $h(s) \geq -C_1$ for all $s>0$, we infer that 
    \begin{align*}
        -\frac{6}{5}\int_{\left\{ n \leq N \right \}}h(n) \leq \frac{6}{5}|\Omega| C_1.
    \end{align*}
    This, together with \eqref{Log.1} leads to 
    \begin{align*}
        \int_{\left\{ n > N \right \}}g(n) &\leq \frac{6}{5} \int_{\left\{ n > N \right \}}h(n) \notag \\
        &\leq \frac{6}{5}\int_{\Omega}h(n)-\frac{6}{5} \int_{\left\{ n \leq  N \right \}}h(n) \notag \\
        &\leq \frac{6}{5}\int_{\Omega}h(n) +\frac{6}{5}|\Omega|C_1.
    \end{align*}
  Since $s\log s\to0$ as $s\searrow0$, $g$ has a continuous extension to $[0,N]$.
Set
\[
C_N:=\max\left\{0,\sup_{0\leq s\leq N}g(s)\right\}.
\]
Then
\begin{align}\label{Log.2}
    \int_\Omega g(n) 
    &= \int_{\{n>N\}} g(n)+\int_{\{n\leq N\}} g(n) \notag\\
    &\leq \frac{6}{5}\int_{\Omega}h(n)
    +\frac{6}{5}|\Omega|C_1+C_N|\Omega|.
\end{align}
   Hence, we have
     \begin{equation*}
        r\int_\Omega n (\log n +1) + \frac{5\mu }{6}\int_\Omega n^2 \log^{1-\eta}(n+e) \leq \mu \int_\Omega \frac{n^2(\log n+1)}{\log^{\eta }(n+e)} + C_2, 
    \end{equation*}
    where $C_2= \frac{6|\Omega|C_1}{5}+C_N|\Omega|$. This proves \eqref{Log-1} and thereby concludes the proof.
\end{proof}

The following lemma plays a central role in handling the singular behavior of \( c \) near zero.

\begin{lemma} \label{nlogc}
The following estimate holds:
\begin{align*}
     -\frac{d}{dt} \int_\Omega n \log c &\leq 2 \int_\Omega \frac{|\nabla n|^2}{n} - \frac{1}{2}\int_\Omega n \frac{|\nabla c|^2}{c^2} -r \int_\Omega n \log c \\
     &\quad+ \mu  \int_\Omega \frac{n^2\log c}{\log^\eta(n+e)}  +\alpha \int_\Omega n - \beta \int_\Omega \frac{n^2}{c}
\end{align*}
    for all $t\in (0,T_{\rm max})$.
\end{lemma}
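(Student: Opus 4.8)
The plan is to compute $\frac{d}{dt}\int_\Omega n\log c$ exactly, substitute the first two equations of \eqref{1}, integrate by parts using the no-flux conditions for $n$ and $c$ together with $\nabla\cdot u=0$ and $u|_{\partial\Omega}=0$, and then close with a single application of Young's inequality after discarding one nonpositive term. Since $c(\cdot,t)\geq e^{-t}\inf_{\bar{\Omega}}c_0>0$ by Lemma~\ref{local} and $(n,c)\in C^{2,1}(\bar{\Omega}\times(0,T_{\rm max}))$, the function $\log c$ is smooth and bounded on each compact subinterval of $(0,T_{\rm max})$, so all the manipulations below are legitimate pointwise in $t$.

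First I would write $\frac{d}{dt}\int_\Omega n\log c=\int_\Omega n_t\log c+\int_\Omega n c_t/c$ and insert $n_t=\Delta n-\chi\nabla\cdot(c^{-k}n\nabla c)-u\cdot\nabla n+rn-\mu n^2/\log^\eta(n+e)$ and $c_t=\Delta c-\alpha c+\beta n-u\cdot\nabla c$. Integrating by parts with $\partial_\nu n=\partial_\nu c=0$ gives $\int_\Omega\Delta n\log c=-\int_\Omega\nabla n\cdot\nabla c/c$, then $-\chi\int_\Omega\nabla\cdot(c^{-k}n\nabla c)\log c=\chi\int_\Omega n|\nabla c|^2/c^{k+1}$, and $\int_\Omega n\Delta c/c=-\int_\Omega\nabla n\cdot\nabla c/c+\int_\Omega n|\nabla c|^2/c^2$; using $\nabla\cdot u=0$ and $u=0$ on $\partial\Omega$ yields $-\int_\Omega(u\cdot\nabla n)\log c=\int_\Omega n\,u\cdot\nabla c/c$, which exactly cancels the fluid-transport term $-\int_\Omega n\,u\cdot\nabla c/c$ coming from the $c$-equation. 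Collecting everything produces the identity
\begin{align*}
-\frac{d}{dt}\int_\Omega n\log c&=2\int_\Omega\frac{\nabla n\cdot\nabla c}{c}-\chi\int_\Omega\frac{n|\nabla c|^2}{c^{k+1}}-\int_\Omega\frac{n|\nabla c|^2}{c^2}\\
&\quad-r\int_\Omega n\log c+\mu\int_\Omega\frac{n^2\log c}{\log^\eta(n+e)}+\alpha\int_\Omega n-\beta\int_\Omega\frac{n^2}{c}.
\end{align*}

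To finish, I would estimate the cross term by Young's inequality in the form $2\frac{\nabla n\cdot\nabla c}{c}\leq 2\frac{|\nabla n|^2}{n}+\frac{1}{2}\frac{n|\nabla c|^2}{c^2}$ (apply $2xy\leq 2x^2+\frac{1}{2}y^2$ with $x=|\nabla n|/\sqrt n$ and $y=\sqrt n\,|\nabla c|/c$), and simply discard the manifestly nonpositive term $-\chi\int_\Omega n|\nabla c|^2/c^{k+1}\leq 0$. The contribution $\frac12\int_\Omega n|\nabla c|^2/c^2$ generated by Young is then absorbed into the pre-existing $-\int_\Omega n|\nabla c|^2/c^2$, leaving $-\frac12\int_\Omega n|\nabla c|^2/c^2$, which is precisely the claimed inequality.

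There is no genuine obstacle here: the statement is an exact differential identity followed by one use of Young's inequality. The only points requiring a little care are the justification that all boundary terms in the integrations by parts vanish — this is exactly where the boundary conditions \eqref{bdry} and the constraint $\nabla\cdot u=0$ are used — and the observation that, after the sign flip, the singular chemotactic term $-\chi\int_\Omega n|\nabla c|^2/c^{k+1}$ carries the favorable sign and may be dropped; it is this structural feature that makes the estimate exploitable later even without a positive lower bound for $c$.
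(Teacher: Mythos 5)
Your proposal is correct and follows essentially the same route as the paper: the identical integrations by parts (with the transport terms cancelling via $\nabla\cdot u=0$ and $u|_{\partial\Omega}=0$) leading to the same exact identity, followed by the same Young estimate $2\frac{\nabla n\cdot\nabla c}{c}\leq 2\frac{|\nabla n|^2}{n}+\frac{1}{2}\frac{n|\nabla c|^2}{c^2}$ and discarding of the nonpositive term $-\chi\int_\Omega n|\nabla c|^2/c^{1+k}$. The only cosmetic difference is that you compute $\frac{d}{dt}\int_\Omega n\log c$ and negate, whereas the paper splits $-\frac{d}{dt}\int_\Omega n\log c$ into two pieces $I+J$ from the start.
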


\begin{proof}
    Direct calculation shows that 
    \begin{align} \label{nlogc.1}
        -\frac{d}{dt} \int_\Omega n \log c = - \int_\Omega n_t \log c - \int_\Omega \frac{n}{c}c_t =I+J.
        \end{align}
    Using the first equation and integrating by parts gives us that 
    \begin{align} \label{nlogc.2}
        I&= -\int_\Omega \log c \left ( -u \cdot \nabla n + \Delta n -\chi \nabla \cdot \left ( n \frac{\nabla c}{c^k} \right )+rn - \frac{\mu n^2}{\log^\eta(n+e)}  \right ) \notag \\
        &= \int_\Omega    u \cdot \nabla n  \log c+ \int_\Omega \frac{\nabla n \cdot \nabla c}{c} -\chi \int_\Omega n \frac{|\nabla c|^2}{c^{1+k}} -r \int_\Omega n \log c + \mu  \int_\Omega \frac{ n^2 \log c}{\log^\eta(n+e)}.
    \end{align}
    Integrating by parts and then using the fact that $\nabla \cdot u =0$ and $u=0$ on $\partial \Omega$, we deduce that 
    \begin{align} \label{nlogc.3}
        \int_\Omega    u \cdot \nabla n  \log c &= - \int_\Omega n \frac{u \cdot \nabla c}{c} .
    \end{align}
    Similarly, we handle $J$ as follows:
    \begin{align} \label{nlogc.4}
        J&= - \int_\Omega \frac{n}{c} \left (-u \cdot \nabla c + \Delta c- \alpha c+ \beta n \right ) \notag \\
        &= \int_\Omega n \frac{u \cdot \nabla c}{c} +\int_\Omega \frac{\nabla n \cdot \nabla c}{c} - \int_\Omega n \frac{|\nabla c|^2}{c^2} +\alpha \int_\Omega n - \beta \int_\Omega \frac{n^2}{c}.
    \end{align}
    Collecting from \eqref{nlogc.1} to \eqref{nlogc.4}, and using Young's inequality, we arrive at 
    \begin{align*}
         -\frac{d}{dt} \int_\Omega n \log c &= 2 \int_\Omega \frac{\nabla n \cdot \nabla c}{c} - \int_\Omega n \frac{|\nabla c|^2}{c^2}-\chi \int_\Omega n \frac{|\nabla c|^2}{c^{1+k}} -r \int_\Omega n \log c  \notag \\
         &\quad+ \mu  \int_\Omega \frac{ n^2 \log c}{\log^\eta(n+e)}+\alpha \int_\Omega n - \beta \int_\Omega \frac{n^2}{c} \notag \\
         &\leq 2 \int_\Omega \frac{|\nabla n|^2}{n} - \frac{1}{2}\int_\Omega n \frac{|\nabla c|^2}{c^2} -r \int_\Omega n \log c + \mu  \int_\Omega \frac{n^2\log c}{\log^\eta(n+e)}+\alpha \int_\Omega n - \beta \int_\Omega \frac{n^2}{c}, 
    \end{align*}
    for all $t\in (0,T_{\rm max})$, which completes the proof.
\end{proof}

To prepare for Lemma \ref{LlogL-p}, we derive the following estimate allowing us to deal with chemo-attractant term.

\begin{lemma} \label{LlolL'}
    There exists $C>0$ independent of initial data such that 
    \begin{align}
          \chi \int_\Omega \nabla c \cdot \frac{\nabla n}{c^k} &\leq   \frac{1}{3} \int_\Omega \frac{|\nabla n|^2}{n} + \frac{1}{12} \int_\Omega n \frac{|\nabla c|^2}{c^2}+\frac{1}{2} \int_\Omega |\Delta c|^2 \notag \\
          &\quad+ \frac{\beta}{6}\int_\Omega \frac{n^2}{c}+ C \int_\Omega n^2 +\frac{1}{2}\int_\Omega c^2 \quad \text{for all }t\in (0,T_{\rm max}). 
    \end{align}
\end{lemma}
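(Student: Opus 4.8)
The plan is to peel off the good term $\tfrac13\int_\Omega\frac{|\nabla n|^2}{n}$ by a single Young step and then force the remaining weighted gradient term into the admissible pieces on the right-hand side, the decisive tool being Lemma~\ref{LK-1} with $p=2$, which gives
\[
\int_\Omega \frac{|\nabla c|^4}{c^2}\le C_1\int_\Omega|\Delta c|^2+C_2\int_\Omega c^2 .
\]
How one splits $\chi\int_\Omega\frac{\nabla c\cdot\nabla n}{c^k}$ depends on the size of $k$. For $k\in[\tfrac12,1)$ I would apply Young's inequality directly, writing the integrand as $\frac{\nabla n}{\sqrt n}\cdot\frac{\chi\sqrt n\,\nabla c}{c^{k}}$, which yields $\tfrac13\int_\Omega\frac{|\nabla n|^2}{n}+\tfrac{3\chi^2}{4}\int_\Omega\frac{n|\nabla c|^2}{c^{2k}}$. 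For $k\in(0,\tfrac12)$ I would integrate by parts first (the boundary term vanishes since $\partial_\nu c=0$), obtaining $-\chi\int_\Omega\frac{n\Delta c}{c^{k}}+k\chi\int_\Omega\frac{n|\nabla c|^2}{c^{k+1}}$.

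When $k\ge\tfrac12$ the exponent $2k$ lies in $[1,2)$, so I would use the pointwise Young inequality $c^{-2k}\le\delta c^{-2}+C_\delta c^{-1}$ (from $c^{-2k}=(c^{-2})^{2k-1}(c^{-1})^{2-2k}$): taking $\delta$ small absorbs the $c^{-2}$ contribution into the $\tfrac1{12}\int_\Omega\frac{n|\nabla c|^2}{c^2}$ slot, and the leftover $C\int_\Omega\frac{n|\nabla c|^2}{c}$ is handled by Hölder, $\int_\Omega\frac{n|\nabla c|^2}{c}\le\|n\|_{L^2(\Omega)}\bigl(\int_\Omega\frac{|\nabla c|^4}{c^2}\bigr)^{1/2}$, then Lemma~\ref{LK-1} with $p=2$, then one more Young step, producing $\epsilon\int_\Omega|\Delta c|^2+\epsilon\int_\Omega c^2+C_\epsilon\int_\Omega n^2$ with $\epsilon$ at our disposal. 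When $k<\tfrac12$, in the first term Young's inequality gives a small multiple of $\int_\Omega|\Delta c|^2$ plus $C\int_\Omega\frac{n^2}{c^{2k}}$, and since $2k<1$ one has $c^{-2k}\le\delta c^{-1}+C_\delta$, so this becomes a small multiple of $\int_\Omega\frac{n^2}{c}$ plus $C\int_\Omega n^2$; the second term has exponent $k+1\in(1,2)$, so the same pointwise bound $c^{-(k+1)}\le\delta c^{-2}+C_\delta c^{-1}$ applies and it is finished exactly as above. One then collects all contributions and tunes the small parameters so that the coefficients of $\int_\Omega\frac{n|\nabla c|^2}{c^2}$, $\int_\Omega|\Delta c|^2$, $\int_\Omega\frac{n^2}{c}$, $\int_\Omega c^2$ stay below $\tfrac1{12}$, $\tfrac12$, $\tfrac\beta6$, $\tfrac12$, dumping the rest into $C\int_\Omega n^2$.

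The main obstacle --- and the reason for the split at $k=\tfrac12$ --- is that the admissible list contains $\frac{n|\nabla c|^2}{c^2}$ but neither $\frac{n^2}{c^2}$ nor the unweighted $n|\nabla c|^2$: the singular factor $c^{-2k}$, whose exponent may exceed $1$, therefore cannot be dealt with by the crude bound $c^{-2k}\le c^{-2}+1$, and reducing it to $|\nabla c|^4$ is fatal since $\int_\Omega|\nabla c|^4$ is not controlled by $\int_\Omega|\Delta c|^2+\int_\Omega c^2$ (Gagliardo--Nirenberg would produce a super-quadratic power of $\|\Delta c\|_{L^2(\Omega)}$). What makes the proof work is that a power $c^{-\theta}$ with $\theta\in[1,2]$ interpolates between $c^{-1}$ and $c^{-2}$, and that the surviving power $c^{-1}$ is exactly what Lemma~\ref{LK-1} with $p=2$ consumes; choosing integration by parts or a direct Young step, according to whether $k+1$ or $2k$ is the exponent one has to tame, is what keeps $\theta\in[1,2]$ throughout.
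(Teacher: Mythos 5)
Your proposal is correct and follows essentially the same route as the paper's proof: the same case split at $k=\tfrac12$, integration by parts for $k<\tfrac12$ versus a direct Young step peeling off $\tfrac13\int_\Omega\frac{|\nabla n|^2}{n}$ for $k\ge\tfrac12$, interpolation of the singular weights $c^{-2k}$ and $c^{-(k+1)}$ between $c^{-2}$, $c^{-1}$ (and $1$), and Lemma~\ref{LK-1} with $p=2$ to absorb the surviving $\int_\Omega\frac{n|\nabla c|^2}{c}$ term. The only cosmetic difference is that you handle that leftover term by H\"older followed by Young, whereas the paper applies a pointwise Young inequality before invoking Lemma~\ref{LK-1}; the two are equivalent.
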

\begin{proof}
In case $k \in (0, \frac{1}{2})$, by integration by parts, we obtain that 
\begin{align}\label{LlolL-p.2}
    \chi \int_\Omega \nabla c \cdot \frac{\nabla n}{c^k} &= -\chi \int_\Omega \frac{n}{c^k} \Delta c +\chi k \int_\Omega n \frac{|\nabla c|^2}{c^{1+k}}.
\end{align}
Applying Young's inequality, we have 
\begin{align}\label{LlolL-p.3}
    -\chi \int_\Omega \frac{n}{c^k} \Delta c &\leq \frac{1}{4}\int_\Omega |\Delta c|^2 +\chi^2 \int_\Omega \frac{n^2}{c^{2k}} \notag \\
    &\leq  \frac{1}{4}\int_\Omega |\Delta c|^2 + \frac{\beta }{6}\int_\Omega \frac{n^2}{c} +C_1 \int_\Omega n^2,
\end{align}
for some $C_1>0$. By Lemma \ref{LK-1}, we can find $C_2>0$ such that 
\begin{align}\label{LlolL-p.4}
    \int_\Omega \frac{|\nabla c|^4}{c^2} \leq C_2 \int_\Omega |\Delta c|^2+ C_2 \int_\Omega c^2.
\end{align}
Using this and employing Young's inequality gives us that 
\begin{align}\label{LlolL-p.5}
    \chi k \int_\Omega n \frac{|\nabla c|^2}{c^{1+k}} &\leq \frac{1}{12}\int_\Omega n\frac{|\nabla c|^2}{c^{2}} + C_3 \int_\Omega n \frac{|\nabla c|^2}{c} \notag \\
    &\leq  \frac{1}{12}\int_\Omega n\frac{|\nabla c|^2}{c^{2}} +\frac{1}{4C_2}\int_\Omega \frac{|\nabla c|^4}{c^2} + C_4 \int_\Omega n^2 \notag \\
    &\leq  \frac{1}{12}\int_\Omega n\frac{|\nabla c|^2}{c^{2}} +\frac{1}{4}\int_\Omega |\Delta c|^2 + \frac{1}{4}\int_\Omega c^2 +C_4 \int_\Omega n^2.
\end{align}
Collecting \eqref{LlolL-p.2}, \eqref{LlolL-p.3}, and \eqref{LlolL-p.5}, we infer that 
\begin{align}\label{LlolL-p.4'}
      \chi \int_\Omega \nabla c \cdot \frac{\nabla n}{c^k}  \leq \frac{1}{12}\int_\Omega n \frac{|\nabla c|^2}{c^2}+\frac{1}{2}\int_\Omega |\Delta c|^2 + \frac{\beta}{6}\int_\Omega \frac{n^2}{c}  +C_5\int_\Omega n^2+\frac{1}{4}\int_\Omega c^2,
\end{align}
where $C_5=C_1+C_4$. In case $k \in [\frac{1}{2},1)$, we apply Young's inequality and use \eqref{LlolL-p.4} to obtain that 
\begin{align} \label{LlolL-p.6}
     \chi \int_\Omega \nabla c \cdot \frac{\nabla n}{c^k} &\leq \frac{1}{3} \int_\Omega \frac{|\nabla n|^2}{n} + \frac{3\chi^2}{4} \int_\Omega n\frac{|\nabla c|^2}{c^{2k}} \notag \\
     &\leq \frac{1}{3} \int_\Omega \frac{|\nabla n|^2}{n} + \frac{1}{12} \int_\Omega n \frac{|\nabla c|^2}{c^2}+C_6 \int_\Omega n \frac{|\nabla c|^2}{c} \notag \\
     &\leq \frac{1}{3} \int_\Omega \frac{|\nabla n|^2}{n} + \frac{1}{12} \int_\Omega n \frac{|\nabla c|^2}{c^2}+\frac{1}{2C_2} \int_\Omega \frac{|\nabla c|^4}{c^2} + C_7 \int_\Omega n^2 \notag \\
     &\leq \frac{1}{3} \int_\Omega \frac{|\nabla n|^2}{n} + \frac{1}{12} \int_\Omega n \frac{|\nabla c|^2}{c^2}+\frac{1}{2} \int_\Omega |\Delta c|^2 + \frac{1}{2} \int_\Omega c^2 + C_7 \int_\Omega n^2 ,
\end{align}
where $C_6$ and $C_7$ are positive constants. Finally, combining \eqref{LlolL-p.4'} and \eqref{LlolL-p.6} proves the desired result.
\end{proof}

We next derive a simple auxiliary estimate that will be used in the proof of Lemma \ref{LlogL-p}.

\begin{lemma} \label{Lnlogc}
    There exists $C_1>0$ such that 
    \begin{align} \label{Lnlogc-1}
        - \int_t ^{t+\tau }\int_\Omega n(\cdot,s) \log c(\cdot,s) \, ds \leq C_1 \qquad \text{for all }t \in (0,T_{\rm max}-\tau),
    \end{align}
    where $\tau = \min \left \{ \frac{T_{\rm max}}{2},1 \right \}$. Furthermore, if $T_{\rm max}= \infty$ then there exists $T>0$ such that
     \begin{align}\label{Lnlogc-2}
        - \int_t ^{t+1 }\int_\Omega n(\cdot,s) \log c(\cdot,s) \, ds \leq C_2 \qquad \text{for all }t >T,
    \end{align}
    where $C_2>0$ independent of initial data.
\end{lemma}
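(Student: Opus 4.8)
\textbf{Proof proposal for Lemma \ref{Lnlogc}.}

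The plan is to derive a differential inequality for the quantity $y(t) := -\int_\Omega n \log c$ that fits the hypotheses of Lemma \ref{ODI} (or a Gronwall-type argument), using only the a priori information already secured in Lemmas \ref{L1}, \ref{L2u}, \ref{c2}, and \ref{nlogc}. First I would note that $y(t)$ is not automatically nonnegative, so I would instead work with $\tilde{y}(t) := \int_\Omega n(1 - \log c)$, which is bounded below since $s(1-\log s)$ is bounded below on $(0,\infty)$ and $\int_\Omega n$ is uniformly bounded by \eqref{L1-1}; up to an additive constant, controlling $\tilde y$ is the same as controlling $y$. Starting from Lemma \ref{nlogc}, the troublesome terms on the right-hand side are $2\int_\Omega |\nabla n|^2/n$ and $\mu\int_\Omega n^2 \log c /\log^\eta(n+e)$. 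The gradient term cannot be absorbed at this stage without a dissipative counterpart, so the key realization is that we do \emph{not} need pointwise-in-time control here: we only want a spatio-temporal bound $\int_t^{t+\tau} y \le C_1$.

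Accordingly, the second step is to combine Lemma \ref{nlogc} with an $L\log L$-type estimate: adding the standard entropy inequality for $\int_\Omega n\log n$ (whose dissipation produces $+c\int_\Omega |\nabla n|^2/n$ with a good sign, after using $\frac{\partial n}{\partial\nu}=0$, $\nabla\cdot u = 0$, and handling the chemotaxis term via Lemma \ref{LlolL'}) lets the $2\int_\Omega|\nabla n|^2/n$ term be absorbed. The remaining bad term $\mu\int_\Omega n^2\log c/\log^\eta(n+e)$ is controlled by splitting $\Omega$ into $\{c \le 1\}$, where $\log c \le 0$ makes the term favorable (it only helps, being negative), and $\{c > 1\}$, where $\log c \le c$ so the term is bounded by $\mu\int_\Omega n^2 c/\log^\eta(n+e) \le \mu\int_\Omega n^2 c$, which after Young's inequality is dominated by $\int_\Omega \frac{n^2}{\log^\eta(n+e)}\cdot(\text{something}) + \int_\Omega c^3$ — both of which have uniform spatio-temporal bounds by \eqref{L1-2} and \eqref{c2.1}+\eqref{c2-1'}. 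The $\alpha\int_\Omega n$ and $-\beta\int_\Omega n^2/c$ terms are harmless: the former is bounded by \eqref{L1-1}, the latter has a good sign.

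The third step assembles these into an inequality of the form $\frac{d}{dt}\tilde y + \tilde y \le h(t)$ (possibly after also tracking $\frac12\int_\Omega|\nabla c|^2$ via Lemma \ref{gradL2c}, absorbing $\int_\Omega |\Delta c|^2$ and the $\|u\|_{L^4}^4|\nabla c|^2$ term using \eqref{L2u-3}), where $h$ satisfies $\int_t^{t+\tau} h \le b$ uniformly thanks to \eqref{L1-2}, \eqref{L2u-3}, \eqref{c2-1'}, and the $L^3$-in-spacetime bound on $c$. Applying Lemma \ref{ODI} gives a uniform-in-time bound on $\tilde y(t)$, hence on $y(t) = -\int_\Omega n\log c$, and integrating over $(t,t+\tau)$ yields \eqref{Lnlogc-1} with a fortiori a bound on the time-integral; for $T_{\rm max}=\infty$ one repeats the argument with the eventual estimates \eqref{L1-3}, \eqref{L1-4}, \eqref{L2u-6}, \eqref{c2-2'} replacing their finite-time analogues, producing the initial-data-independent constant $C_2$ and proving \eqref{Lnlogc-2}. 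The main obstacle I anticipate is the careful bookkeeping in the $\{c>1\}$ estimate of $\int_\Omega n^2\log c/\log^\eta(n+e)$: one must extract enough of a factor of $\log^{-\eta}(n+e)$ to keep the $n^2$-contribution inside the spatio-temporally bounded quantity $\int_\Omega n^2/\log^\eta(n+e)$ while pushing the $c$-dependence into a power of $c$ that the $L^3$ bound on $c$ can absorb — this is exactly the kind of sub-logistic juggling that Lemma \ref{Log} and Lemma \ref{C52.ILGN} are designed to facilitate, so I expect to invoke a variant of that splitting rather than a brute-force Young's inequality.
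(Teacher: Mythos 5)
Your route is very different from the paper's, and as proposed it does not close. The paper's proof is a short, self-contained trick on the \emph{c-equation}: test the second equation of \eqref{1} by $\log c+1$, i.e. compute $\frac{d}{dt}\int_\Omega c\log c$. The transport term vanishes ($\nabla\cdot u=0$, $u=0$ on $\partial\Omega$), the diffusion and $-\alpha c$ terms have good signs, and the source produces exactly $+\beta\int_\Omega n\log c$. Integrating in time, the quantity $-\beta\int_t^{t+\tau}\int_\Omega n\log c$ is then bounded by $C\tau$ plus the boundary-in-time terms $\int_\Omega c\log c(\cdot,t)-\int_\Omega c\log c(\cdot,t+\tau)$, which are controlled by $\int_\Omega c^2$ (Lemma \ref{c2}) and $|\Omega|/e$. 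No entropy, no Lemma \ref{nlogc}, no Gronwall lemma is needed.

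Your plan instead rebuilds essentially the machinery of Lemma \ref{LlogL-p}, and this is where the genuine gap lies. Once you use Lemma \ref{LlolL'} to absorb the chemotaxis term you are forced to carry $+\tfrac12\int_\Omega|\Delta c|^2$, hence to include $\tfrac12\int_\Omega|\nabla c|^2$ in the functional via Lemma \ref{gradL2c}; but then the right-hand side contains $\bigl(C\int_\Omega|u|^4\bigr)\cdot\int_\Omega|\nabla c|^2$, i.e. a term of the form $h(t)\,y(t)$ with $h$ only spatio-temporally bounded by \eqref{L2u-3}, not pointwise bounded. Lemma \ref{ODI} does not apply to such an inequality, and the lemma that does (Lemma \ref{ODI'}) requires as input a spatio-temporal bound on $y$ itself — which is precisely what the present lemma (together with \eqref{L1-2}, \eqref{c2-1'}) is designed to supply in the proof of Lemma \ref{LlogL-p}. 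So your argument is circular as stated. Two further details are also off: the lower bound for $\tilde y=\int_\Omega n(1-\log c)$ does not follow from the boundedness below of $s(1-\log s)$, since the integrand couples two different functions ($n$ and $c$); bounding $\int_\Omega n\log c$ from above needs $\int_\Omega n\log n$ via $xy\le x\log x+e^{y-1}$, which is not yet available. And on $\{c>1\}$ you cannot dominate $\int_\Omega n^2 c/\log^\eta(n+e)$ by $\int_\Omega n^2/\log^\eta(n+e)$ times a constant plus $\int_\Omega c^3$ — Young's inequality necessarily raises the power of $n$ above $2$; the workable estimate is again $xy\le x\log x+e^{y-1}$, as in \eqref{M.7}, which then needs the $-\int_\Omega n^2\log^{1-\eta}(n+e)$ dissipation from Lemma \ref{Log}, pulling you back into the full (and circular) functional argument.
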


\begin{proof}
    Testing the second equation of \eqref{1} by $\log c+1$ and using the fourth equation of \eqref{1} and boundary condition of $u$  and then employing the estimate \eqref{L1-1}  yields that 
    \begin{align} \label{Lnlogc.1}
        \frac{d}{dt}\int_\Omega c\log c &= \int_\Omega (\log c+1) \left ( - u \cdot \nabla c+ \Delta c-\alpha c+ \beta n \right ) \notag \\
        &= - \int_\Omega \frac{|\nabla c|^2}{c} - \alpha \int_\Omega c \log c- \alpha \int_\Omega c + \beta \int_\Omega n \log c + \beta \int_\Omega n \notag \\
        &\leq \frac{\alpha |\Omega|}{e}+  \beta \int_\Omega n \log c +C_1 \qquad \text{for all }t\in (0,T_{\rm max}),
    \end{align}
    where we used $x\log x \geq  -\frac{1}{e}$ for all $x>0$ and $C_1=  \beta \sup_{t\in (0,T_{\rm max})} \int_\Omega n(\cdot,s)$. This, together with the estimate \eqref{c2-1}, implies that
    \begin{align} \label{Lnlogc.2}
        -\beta \int_t^{t+\tau }  \int_\Omega n(\cdot,s) \log c(\cdot,s)\, ds  &\leq  \left ( \frac{\alpha|\Omega|}{e}+C_1 \right )\tau  + \int_\Omega c(\cdot,t)\log c(\cdot,t)-\int_\Omega c(\cdot,t+\tau)\log c(\cdot,t+\tau ) \notag \\
        &\leq \left ( \frac{\alpha|\Omega|}{e}+C_1 \right )\tau + \int_\Omega c^2(\cdot,t) + \frac{|\Omega|}{e} \notag \\
        &\leq C_2 \qquad \text{for all }t\in (0,T_{\rm max}-\tau ),
    \end{align}
    where $C_2=\left ( \frac{\alpha|\Omega|}{e}+C_1 \right )\tau  + \frac{|\Omega|}{e} + \sup_{t\in (0,T_{\rm max})} \int_\Omega c^2(\cdot,t)$, which proves \eqref{Lnlogc-1}. Now, under the assumption $T_{\max} = \infty$, the proof of \eqref{Lnlogc-2} follows the same reasoning as \eqref{Lnlogc.1} and \eqref{Lnlogc.2} except for using \eqref{L1-3} and \eqref{c2-2} in place of \eqref{L1-1} and \eqref{c2-1}, respectively.

\end{proof}

\section{\texorpdfstring{ Boundedness of $n$ in $L\log L$}{L log L and L^p Boundedness of Solutions}} \label{S4}
In this section, we first establish a uniform $L \log L$ bound for $n$, which subsequently enables us to derive $L^p$ boundedness in Section~\ref{S4'}. We then prove the uniform boundedness of $u$ in $W^{1,2}(\Omega)$. Let us now derive the most delicate part, namely, the derivation of a uniform $L \log L$ bound for $n$, as follows:

\begin{lemma} \label{LlogL-p}
There exists a positive constant $C_1$ such that
    \begin{align}\label{LlogL-p-1}
        \int_\Omega n(\cdot,t) \log n(\cdot,t) \leq C_1 \qquad \text{for all } t \in (0,T_{\rm max}),
    \end{align}
    and 
    \begin{align}\label{LlogL-p-1'}
        \int_\Omega |\nabla c(\cdot,t) |^2 \leq C_1 \qquad \text{for all } t \in (0,T_{\rm max}),
    \end{align}
    as well as 
    \begin{align}\label{LlogL-p-1''}
        \int_t^{t+\tau}\int_\Omega n^2(\cdot,s) \log^{1-\eta}(n(\cdot,s)+e) \,ds \leq C_1 \qquad \text{for all } t \in (0,T_{\rm max} -\tau).
    \end{align}
    
    Furthermore, if $T_{\rm max}= \infty$ then there exists $T>0$ such that 
    \begin{align}\label{LlogL-p-2}
           \int_\Omega n(\cdot,t) \log n(\cdot,t) \leq C_2 \qquad \text{for all } t >T,
    \end{align}
      and 
    \begin{align}\label{LlogL-p-2'}
        \int_\Omega |\nabla c(\cdot,t) |^2 \leq C_2 \qquad \text{for all } t >T,
    \end{align}
      as well as 
    \begin{align}\label{LlogL-p-2''}
        \int_t^{t+1}\int_\Omega n^2(\cdot,s) \log^{1-\eta}(n(\cdot,s)+e) \,ds \leq C_2 \qquad \text{for all } t>T,
    \end{align}
    where $C_2>0$ is independent of initial data.
\end{lemma}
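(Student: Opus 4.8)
The backbone is the energy functional
\[
F(t) := \int_\Omega n\log n - \frac13\int_\Omega n\log c + \frac12\int_\Omega|\nabla c|^2,
\]
whose boundedness I would extract from Lemma~\ref{ODI'}. First I would test the $n$‑equation of \eqref{1} against $\log n+1$: the drift term drops since $\nabla\cdot u=0$ and $u=0$ on $\partial\Omega$, the source is handled by Lemma~\ref{Log}, and the chemotactic contribution $\chi\int_\Omega \nabla c\cdot\nabla n/c^k$ by Lemma~\ref{LlolL'}. Adding $-\tfrac13$ times the identity of Lemma~\ref{nlogc} and $\tfrac12$ times the inequality of Lemma~\ref{gradL2c}, the dissipation integrals $\int_\Omega|\nabla n|^2/n$ and $\int_\Omega|\Delta c|^2$ should cancel exactly (this dictates the coefficients $\tfrac13$ and $\tfrac12$), leaving, up to a harmless nonpositive term $-\tfrac1{12}\int_\Omega n|\nabla c|^2/c^2$, an inequality of the shape
\[
F'(t)+\frac{5\mu}{6}\int_\Omega n^2\log^{1-\eta}(n+e)\ \le\ \frac{\mu}{3}\int_\Omega\frac{n^2\log c}{\log^\eta(n+e)}-\frac r3\int_\Omega n\log c-\frac\beta6\int_\Omega\frac{n^2}{c}+C\int_\Omega n^2+C\Bigl(1+\int_\Omega|u|^4\Bigr)\int_\Omega|\nabla c|^2+C.
\]

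The routine positive terms are absorbed via $\int_\Omega n^2\le\varepsilon\int_\Omega n^2\log^{1-\eta}(n+e)+C_\varepsilon$ and the uniform bounds $\int_\Omega n\le C$ (Lemma~\ref{L1}) and $\int_\Omega c^2\le C$ (Lemma~\ref{c2}). The delicate term is $\tfrac\mu3\int_\Omega n^2\log c/\log^\eta(n+e)$: since the Dirichlet‑type dissipation $\int_\Omega|\nabla n|^2/n$ has been spent on the chemotaxis term, only the sub‑logistic quantity $\int_\Omega n^2\log^{1-\eta}(n+e)$ is available, and it cannot control $L^p$‑norms of $n$ for $p>2$. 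I would split $\Omega$ into $\{c<1\}$, $\{c\ge1,\ c\le(n+e)^{1-\epsilon}\}$ and $\{c\ge1,\ c>(n+e)^{1-\epsilon}\}$. On $\{c<1\}$ the integrand is nonpositive; there the only positive contribution is $\tfrac r3\int_{\{c<1\}}n|\log c|$, absorbed into $-\tfrac\beta6\int_\Omega n^2/c$ via $n|\log c|\le\delta\,n^2/c+C_\delta\,c(\log c)^2$ and the boundedness of $c(\log c)^2$ on $(0,1]$. On $\{c\ge1,\ c\le(n+e)^{1-\epsilon}\}$ one has $\log c\le(1-\epsilon)\log(n+e)$, so the integrand is $\le(1-\epsilon)n^2\log^{1-\eta}(n+e)$, which is reabsorbed into $-\tfrac{5\mu}{6}\int_\Omega n^2\log^{1-\eta}(n+e)$; a short bookkeeping check shows the total absorbed fraction stays strictly below $\tfrac56$, leaving a positive multiple of $\int_\Omega n^2\log^{1-\eta}(n+e)$ for \eqref{LlogL-p-1''}. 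On the last region $n<c^{1/(1-\epsilon)}$, so, using $\log^\eta(n+e)\ge1$ and $\log c\le C_\delta c^\delta$, the integrand is $\le C_\delta c^{q}$ with $q=\tfrac2{1-\epsilon}+\delta<3$ for $\epsilon,\delta$ small; then Gagliardo--Nirenberg together with the \emph{uniform} $L^2$‑bound on $c$ gives $\int_\Omega c^{q}\le C\|\nabla c\|_{L^2(\Omega)}^{q-2}+C\le CF(t)+C$, since $q-2<1$.

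To handle the fluid coupling I would record the pointwise bound $\int_\Omega|\nabla c|^2\le 2F(t)+C$: write $\int_\Omega|\nabla c|^2=2F-2\int_\Omega n\log n+\tfrac23\int_\Omega n\log c$, split $\{c\ge1\}$ into $\{c\le n\}$ (where $n\log c\le n\log n$) and $\{c>n\}$ (where $n\log c\le c\log c\le c^2$), and use $\int_\Omega c^2\le C$ and $n\log n\ge-\tfrac1e$; the same computation yields $\int_{\{c\ge1\}}n\log c\le\int_\Omega n\log n+C$, hence $F\ge\tfrac23\int_\Omega n\log n-C\ge-C$. Thus $C(1+\int_\Omega|u|^4)\int_\Omega|\nabla c|^2\le(C+C\int_\Omega|u|^4)(F(t)+1)$. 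Collecting everything and discarding the nonpositive terms gives $F'(t)\le h(t)F(t)+g(t)$ with $h(t)=C+C\int_\Omega|u(\cdot,t)|^4$ and $g(t)=C+C\int_\Omega|u(\cdot,t)|^4$, whose $\tau$‑time‑integrals are bounded by \eqref{L2u-3}; moreover $\int_t^{t+\tau}F$ is bounded because $\int_\Omega n\log n\le\varepsilon\int_\Omega n^2/\log^\eta(n+e)+C$ is spatio‑temporally integrable by \eqref{L1-2}, $-\int_\Omega n\log c$ by Lemma~\ref{Lnlogc}, and $\int_\Omega|\nabla c|^2$ by \eqref{c2-1'}. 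Applying Lemma~\ref{ODI'} to $F+C\ge0$ yields a uniform bound on $F$, hence \eqref{LlogL-p-1} (via $\int_\Omega n\log n\le\tfrac32F+C$), \eqref{LlogL-p-1'} (via $\int_\Omega|\nabla c|^2\le2F+C$), and \eqref{LlogL-p-1''} by integrating the differential inequality over $(t,t+\tau)$ and keeping the reserved portion of $\int_\Omega n^2\log^{1-\eta}(n+e)$.

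The eventual‑in‑time bounds \eqref{LlogL-p-2}--\eqref{LlogL-p-2''} follow by running the same scheme on $(T,\infty)$ with $\tau=1$, feeding in \eqref{L1-3}, \eqref{L1-4}, \eqref{L2u-6}, \eqref{c2-2} and \eqref{Lnlogc-2} in place of their finite‑time counterparts, so that $L_1,L_2,L_3$ in Lemma~\ref{ODI'} become independent of the initial data. The step I expect to be the real obstacle is precisely the control of $\int_\Omega n^2\log c/\log^\eta(n+e)$ where $c$ is large: the three‑region decomposition above, and in particular the use of the uniform $L^2$‑bound on $c$ to make the Gagliardo--Nirenberg estimate of $\int_\Omega c^q$ sublinear in $F$, is the device that lets the argument close without a lower bound on $c$ and without a largeness assumption on $\mu$.
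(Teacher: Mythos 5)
Your proposal is correct and, in its architecture, is the paper's proof: the same functional \(F(t)=\int_\Omega n\log n-\tfrac13\int_\Omega n\log c+\tfrac12\int_\Omega|\nabla c|^2\), the same combination of Lemmas \ref{Log}, \ref{LlolL'}, \ref{nlogc} (with weight \(\tfrac13\)) and \ref{gradL2c} (with weight \(\tfrac12\)) so that \(\int_\Omega|\nabla n|^2/n\) and \(\int_\Omega|\Delta c|^2\) cancel, the same spatio-temporal bounds (\eqref{L1-2}, \eqref{c2-1'}, Lemma \ref{Lnlogc}, \eqref{L2u-3}) feeding Lemma \ref{ODI'}, and the same re-run with data-independent constants for the eventual bounds. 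The one place where you genuinely diverge is the delicate term \(\tfrac{\mu}{3}\int_\Omega \frac{n^2\log c}{\log^\eta(n+e)}\): the paper dispatches it in one line with the Young-type inequality \(xy\le x\ln x+e^{y-1}\) (taking \(x=n^2/\log^\eta(n+e)\), \(y=\log c\)), which together with the uniform \(L^1\) bound \eqref{L1-1'} yields \(\tfrac{2\mu}{3}\int_\Omega n^2\log^{1-\eta}(n+e)+C\), immediately absorbed by the \(\tfrac{5\mu}{6}\)-dissipation from Lemma \ref{Log}; the same trick is used to prove nonnegativity of the functional and to separate the bounds on \(\int_\Omega n\log n\) and \(\int_\Omega|\nabla c|^2\) at the end. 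Your three-region decomposition (sign on \(\{c<1\}\), absorption into the sub-logistic dissipation where \(c\le(n+e)^{1-\epsilon}\), and Gagliardo--Nirenberg with the uniform \(L^2\) bound on \(c\) where \(c\) dominates, giving a sublinear-in-\(F\) contribution folded into \(h(t)F\)) also closes, and your region splitting \(\{c\le n\}\), \(\{c>n\}\) for the lower bound on \(F\) is an equivalent substitute for the paper's second use of the Young trick; the trade-off is simply length and bookkeeping (your route additionally leans on \eqref{c2-1}, which is available) versus the paper's more compact exponential-Young device, which only needs \(\int_\Omega c\) bounded.
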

\begin{proof}
    Testing the first equation \eqref{1} by $\log n +1$ and using the fact that $\nabla \cdot u =0$ and $u=0$ on $\partial \Omega$ yields
\begin{align} \label{M.1}
    \frac{d}{dt}\int_\Omega n \log n &= \int_\Omega (\log n+1) \left ( -u \cdot \nabla n + \Delta n -\chi \nabla \cdot \left ( \frac{n}{c^k} \nabla c \right ) +rn - \frac{\mu n^2}{\log^\eta(n+e)}\right ) \notag\\
    &= -\int_\Omega \frac{|\nabla n|^2}{n} + \chi \int_\Omega \nabla n \cdot \frac{\nabla c}{c^k} + \int_\Omega (\log n+1)\left ( rn - \frac{\mu n^2}{\log^\eta(n+e)} \right ),
\end{align}
for all $t\in (0,T_{\rm max})$. Thanks to Lemma \ref{LlolL'}, we can find $C_1>0$ independent of initial data such that 
  \begin{align} \label{M.2}
          \chi \int_\Omega \nabla c \cdot \frac{\nabla n}{c^k} &\leq   \frac{1}{3} \int_\Omega \frac{|\nabla n|^2}{n} + \frac{1}{12} \int_\Omega n \frac{|\nabla c|^2}{c^2}+\frac{1}{2} \int_\Omega |\Delta c|^2 \notag \\
          &\quad+ \frac{\beta}{6}\int_\Omega \frac{n^2}{c}+ C_1 \int_\Omega n^2 +\frac{1}{2}\int_\Omega c^2 \qquad \text{for all }t\in (0,T_{\rm max}). 
    \end{align}
Applying Lemma \ref{nlogc}, we obtain that 
\begin{align} \label{M.3}
     -\frac{1}{3}\frac{d}{dt} \int_\Omega n \log c &\leq \frac{2}{3} \int_\Omega \frac{|\nabla n|^2}{n} - \frac{1}{6}\int_\Omega n \frac{|\nabla c|^2}{c^2} - \frac{r}{3} \int_\Omega n \log c  \notag \\
     &\quad+ \frac{\mu}{3}   \int_\Omega \frac{n^2\log c}{\log^\eta(n+e)} \quad+\frac{\alpha}{3} \int_\Omega n - \frac{\beta }{3} \int_\Omega \frac{n^2}{c}\qquad \text{for all }t\in (0,T_{\rm max}).
\end{align}
By Lemma \ref{gradL2c}, we can find $C_2>0$ independent of initial data such that
\begin{align} \label{M.4}
    \frac{1}{2}\frac{d}{dt} \int_\Omega |\nabla c|^2
    + \frac{1}{2}\int_\Omega |\Delta c|^2
    &\leq C_2\left(1+\int_\Omega |u|^4\right)
    \int_\Omega |\nabla c|^2
    +  \beta^2 \int_\Omega n^2
\end{align}
for all $t\in (0,T_{\rm max})$. From Lemma \ref{Log}, we can find $C_3>0$ independent of initial data such that 
\begin{align}\label{M.5}
      \int_\Omega (\log n+1)\left ( rn - \frac{\mu n^2}{\log^\eta(n+e)} \right ) \leq - \frac{5\mu}{6} \int_\Omega n^2 \log^{1-\eta }(n+e)+C_3\qquad \text{for all }t\in (0,T_{\rm max}).
\end{align}
Collecting from \eqref{M.1} to \eqref{M.5}, we arrive at 
\begin{align}\label{M.6}
   \frac{d}{dt} \left \{\int_\Omega n \log n - \frac{1}{3}\int_\Omega n \log c
   + \frac{1}{2} \int_\Omega |\nabla c|^2 \right \} 
   &\leq C_2\left(1+\int_\Omega |u|^4\right)\int_\Omega |\nabla c|^2
   - \frac{r}{3}\int_\Omega n \log c \notag \\
   &\quad+ \frac{1}{2}\int_\Omega c^2 +C_4 \int_\Omega n^2
   + \frac{\alpha}{3}\int_\Omega n
   - \frac{\beta}{6}\int_\Omega \frac{n^2}{c}  \notag \\
   &\quad  + \frac{\mu}{3} \int_\Omega \frac{n^2 \log c}{\log^\eta (n+e)}
   - \frac{5\mu}{6} \int_\Omega n^2 \log^{1-\eta }(n+e)+C_3 
\end{align}
for all $t\in (0,T_{\rm max})$. Thanks to Young's inequality that 
\begin{align}\label{M.6'}
    ab \leq a\ln a +e^{b-1} \qquad \text{for all }a>0 \text{ and } b\in \mathbb{R},
\end{align}
and \eqref{L1-1'}, we infer that
\begin{align}\label{M.7}
     \frac{\mu}{3} \int_\Omega \frac{n^2 \log c}{\log^\eta (n+e)} &\leq \frac{\mu}{3} \int_\Omega \frac{n^2}{\log^\eta (n+e)} \log \left (  \frac{n^2}{\log^\eta (n+e)} \right ) + \frac{\mu}{3e}\int_\Omega c \notag \\
     &\leq \frac{2\mu}{3} \int_\Omega n^2 \log^{1-\eta }(n+e) + C_5,
\end{align}
where $C_5>0$.
Since $-\log x \leq x^{-1/2}$ for all $x>0$, it follows that 
\begin{align}\label{M.8}
    -\frac{r}{3}\int_\Omega n \log c 
    &\leq \frac{r}{3}\int_\Omega \frac{n}{\sqrt{c}} \notag \\
    &\leq \frac{\beta}{6}\int_\Omega \frac{n^2}{c}+ C_6,
\end{align}
for some $C_6>0$. Since $\eta \in (0,1)$, we can find $C_7>0$ such that
\begin{align}\label{M.9}
   C_4 \int_\Omega n^2 \leq \frac{\mu}{12} \int_\Omega n^2 \log^{1-\eta}(n+e)+C_7.
\end{align}
Applying \eqref{M.6'}, \eqref{L1-1'}, and using the fact that $x\log x \geq -\frac{1}{e}$ for all $x>0$, we find that
\begin{align}\label{M.10}
    -\int_\Omega n \log n +\frac{1}{3} \int_\Omega n \log c &\leq -\frac{2}{3} \int_\Omega n \log n + \frac{1}{3e}\int_\Omega c \notag \\
    &\leq C_8 \qquad \text{for all } t\in (0,T_{\rm max}).
\end{align}
Setting 
\begin{equation*}
    y(t) := \int_\Omega n \log n -\frac{1}{3} \int_\Omega n \log c + \frac{1}{2} \int_\Omega |\nabla c|^2 +C_8 \qquad \text{for all }t\in (0,T_{\rm max}),
\end{equation*}
then \eqref{M.10} warrants that $y$ is nonnegative.  Collecting from 
\eqref{M.6} to \eqref{M.10} and using the estimates \eqref{L1-1} and \eqref{c2-1} yields 
\begin{align}\label{M.11}
    y'(t) +\frac{\mu}{12}\int_\Omega n^2 \log^{1-\eta}(n+e)
    \leq C_2\left(1+\int_\Omega |u(\cdot,t)|^4\right)y(t)+C_{9}
\end{align}
for all $t\in (0,T_{\rm max})$, where $C_{9}>0$. From the estimates 
\eqref{L1-2} and \eqref{c2-1'} and Lemma \ref{Lnlogc}, there exists $C_{10}>0$ such that 
\begin{align} \label{M.12}
    \int_t^{t+\tau }y(s)\, ds \leq C_{10} 
    \qquad \text{for all }t \in (0, T_{\rm max} -\tau ).
\end{align}
Setting 
\[
h(t)= C_2\left(1+\int_\Omega |u(\cdot,t)|^4\right)
\qquad\text{and}\qquad 
g(t)=C_{9},
\]
it follows from \eqref{L2u-3} that there exists $C_{11}>0$ such that
\begin{align}\label{M.13}
    \int_t^{t+\tau }h(s)\, ds \leq C_{11} 
    \qquad \text{for all }t \in (0, T_{\rm max} -\tau ),
\end{align}
and
\begin{align}\label{M.14}
    \int_t^{t+\tau }g(s)\, ds \leq C_{9}
    \qquad \text{for all }t \in (0, T_{\rm max} -\tau ).
\end{align}
Combining \eqref{M.11}--\eqref{M.14} and applying Lemma \ref{ODI'}, we obtain
\[
    y(t)\leq C_{12}
    \qquad\text{for all }t\in(\tau,T_{\rm max}).
\]
By local regularity and the assumptions on the initial data,
\[
    \sup_{0<t\leq\tau}y(t)<\infty.
\]
Hence, after enlarging $C_{12}$ if necessary,
\begin{align}\label{M.14'}
    y(t)\leq C_{12}
    \qquad\text{for all }t\in(0,T_{\rm max}).
\end{align}
This, together with \eqref{M.10}, implies that

\begin{align*}
    \int_\Omega |\nabla c(\cdot,t)|^2 &\leq  2C_{12}-2\int_\Omega n \log n +\frac{2}{3} \int_\Omega n \log c \notag \\
    &\leq C_{13}\qquad \text{for all }t \in (0, T_{\rm max} ),
\end{align*}
where $C_{13}>0$. By applying \eqref{M.6'} and \eqref{L1-1'}, we can find $C_{14}>0$ such that 
\begin{align*}
    \int_\Omega n \log n &\leq C_{12} + \frac{1}{3}\int_\Omega n \log c \notag \\  
    &\leq C_{12}+ \frac{1}{3}\int_\Omega n \log n + \frac{1}{3e}\int_\Omega c \notag \\
    &\leq C_{14}+ \frac{1}{3}\int_\Omega n \log n,
\end{align*}
which further entails that
\begin{align}\label{M.15}
    \int_\Omega n(\cdot,t) \log n(\cdot,t) \leq \frac{3C_{14}}{2}\qquad \text{for all }t \in (0, T_{\rm max} ).
\end{align}
Integrating \eqref{M.11} over $(t,t+\tau)$ and using non-negativity of $y$, 
\eqref{M.13}, and \eqref{M.14'}, we obtain
\begin{align}\label{M.17}
    \frac{\mu}{12}\int_t^{t+\tau}\int_\Omega 
    n^2\log^{1-\eta}(n+e)\,dx\,ds
    &\leq y(t)-y(t+\tau)+\int_t^{t+\tau}h(s)y(s)\,ds+C_9\tau \notag\\
    &\leq C_{15}
    \qquad \text{for all }t \in (0, T_{\rm max} -\tau),
\end{align}
for some $C_{15}>0$.
Under the assumption \(T_{\rm max}=\infty\), we repeat the argument above on a sufficiently large interval, using 
\eqref{L1-3}, \eqref{L1-3'}, \eqref{L1-4}, \eqref{c2-2}, \eqref{c2-2'}, 
\eqref{L2u-6}, and \eqref{Lnlogc-2} in place of 
\eqref{L1-1}, \eqref{L1-1'}, \eqref{L1-2}, \eqref{c2-1}, \eqref{c2-1'}, 
\eqref{L2u-3}, and \eqref{Lnlogc-1}, respectively. 
This yields \eqref{LlogL-p-2}, \eqref{LlogL-p-2'}, and \eqref{LlogL-p-2''} 
with a constant independent of the initial data. Hence, the proof is complete.
\end{proof}

As a consequence of Lemma \ref{LlogL-p} and Sobolev's inequality, $c$ is proven to be uniformly bounded in $L^p$ for all $p\geq 1$ as in the following lemma.
\begin{lemma} \label{Lp-v}
    For any $p \geq  1$, there exists $C_1=C(p)>0$ such that 
    \begin{align}\label{Lp-v-1}
        \int_\Omega c^p(\cdot,t) \leq C_1 \qquad \text{for all }t\in (0,T_{\rm max}).
    \end{align}
    Moreover, if $T_{\rm max}= \infty$ then there exists $T>0$ such that 
     \begin{align} \label{Lp-v-2}
        \int_\Omega c^p(\cdot,t) \leq C_2 \qquad \text{for all }t\in (T, \infty),
    \end{align}
    where $C_2>0$ is independent of initial data.
\end{lemma}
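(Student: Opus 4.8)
The plan is to read off the desired $L^p$-bounds purely from the estimates already obtained, without returning to the PDE. The key ingredients are the uniform $W^{1,2}$-control of $c$ established in Lemma~\ref{LlogL-p} together with the uniform $L^1$-control of $c$ from Lemma~\ref{L1}. Concretely, \eqref{LlogL-p-1'} gives
\[
\sup_{t\in(0,T_{\rm max})}\int_\Omega |\nabla c(\cdot,t)|^2 \le C_1,
\]
while \eqref{L1-1'} gives $\sup_{t\in(0,T_{\rm max})}\int_\Omega c(\cdot,t)\le C_2$. Hence $c(\cdot,t)$ is bounded in $W^{1,2}(\Omega)$ uniformly in $t\in(0,T_{\rm max})$.

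Next I would invoke the two-dimensional Sobolev embedding $W^{1,2}(\Omega)\hookrightarrow L^p(\Omega)$, which is valid for every finite $p\in[1,\infty)$ since $\Omega\subset\mathbb{R}^2$ is a bounded smooth domain; equivalently, one may apply the Gagliardo--Nirenberg interpolation inequality in the form
\[
\|c\|_{L^p(\Omega)} \le c_3 \|\nabla c\|_{L^2(\Omega)}^{\frac{p-1}{p}}\,\|c\|_{L^1(\Omega)}^{\frac1p} + c_3\|c\|_{L^1(\Omega)},
\]
with $c_3=c_3(p,\Omega)>0$. Combining this with the two bounds above immediately yields a constant $C_1=C_1(p)>0$ with $\int_\Omega c^p(\cdot,t)\le C_1$ for all $t\in(0,T_{\rm max})$, which is \eqref{Lp-v-1}.

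For the ``moreover'' part, under $T_{\rm max}=\infty$ I would repeat the same interpolation step but feed in the eventual-in-time estimates \eqref{LlogL-p-2'} and \eqref{L1-3'}, both of which provide bounds valid for $t>T$ with constants independent of the initial data. Since the Sobolev/Gagliardo--Nirenberg constant $c_3(p,\Omega)$ depends only on $p$ and $\Omega$, the resulting bound $C_2$ is likewise independent of the initial data, giving \eqref{Lp-v-2}. This argument is essentially routine; the only point worth flagging is that the constant $C_1(p)$ necessarily deteriorates as $p\to\infty$ (the two-dimensional embedding fails at $p=\infty$), so an $L^\infty$-bound for $c$ cannot be extracted this way and will instead be obtained later via semigroup estimates once higher integrability of $n$ is available.
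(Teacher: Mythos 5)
Your proposal is correct and follows essentially the same route as the paper: the paper also deduces \eqref{Lp-v-1} by combining the uniform bound on $\int_\Omega |\nabla c|^2$ from \eqref{LlogL-p-1'} with the $L^1$ bound \eqref{L1-1'} via the Sobolev (Gagliardo--Nirenberg) inequality $\int_\Omega c^p \le c_1\bigl(\int_\Omega |\nabla c|^2\bigr)^{p/2}+c_1\bigl(\int_\Omega c\bigr)^p$, and obtains \eqref{Lp-v-2} by the identical argument with \eqref{LlogL-p-2'} and \eqref{L1-3'} in place of the earlier estimates. Your interpolation exponents and the remark on initial-data independence are consistent with the paper's proof.
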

\begin{proof}
   By Sobolev's inequality, we can find $C_1=C_1(p, \Omega)>0$ such that 
    \begin{align} \label{Lp-v.1}
        \int_\Omega c^p \leq C_1  \left ( \int_\Omega |\nabla c|^{2} \right )^{\frac{p}{2}}+C_1 \left ( \int_\Omega c \right )^p  \quad \text{for all }t \in (0,T_{\rm max}),
    \end{align}
    In view of \eqref{L1-1'} and \eqref{LlogL-p-1'}, we find that $\int_\Omega |\nabla c(\cdot,t)|^2$ and $\int_\Omega c(\cdot,t)$ are uniformly bounded in time, which together with \eqref{Lp-v.1} implies \eqref{Lp-v-1}. If $T_{\rm max}= \infty$, then we use the above estimate except for using  \eqref{L1-3'} and \eqref{LlogL-p-2'} instead of \eqref{L1-1'} and \eqref{LlogL-p-1'} to deduce \eqref{Lp-v-2}. The proof is now complete.
\end{proof}

As a consequence of the above lemmas, we can now establish a uniform $W^{1,2}$ boundedness for $u$ as follows:
\begin{lemma} \label{Lpu}
There exists $C_1>0$ (possibly depending on 
the initial data) such that
    \begin{equation} \label{Lpu-1}
        \int_\Omega |\nabla u(\cdot,t)|^2  \leq C_1 \qquad \text{for all }t\in (0,T_{\rm max}).
    \end{equation}
As a consequence, for all $p>2$ there exists $C_2=C_2(p)>0$(possibly depending on 
the initial data) such that
    \begin{align} \label{Lpu-2}
        \int_\Omega |u(\cdot,t)|^p \leq C_2 \qquad \text{for all }t\in (0,T_{\rm max}).
    \end{align}
  Moreover, if $T_{\rm max}= \infty$ then there exists $T>0$ such that
     \begin{equation} \label{Lpu-3}
        \int_\Omega |\nabla u(\cdot,t)|^2  \leq C_3 \qquad \text{for all }t>T,
    \end{equation}
and
    \begin{align} \label{Lpu-4}
        \int_\Omega |u(\cdot,t)|^p \leq C_4 \qquad \text{for all }t>T,
    \end{align}
    where $C_3$ and $C_4$ are positive constants independent of initial data.
\end{lemma}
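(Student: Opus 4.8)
The plan is to control $y(t):=\int_\Omega|\nabla u(\cdot,t)|^2$ via a differential inequality of the form $y'\le h(t)y+g(t)$ and then invoke Lemma~\ref{ODI'}, where the hypothesis $\int_t^{t+\tau}y\le L_1$ is supplied by \eqref{L2u-2} and the uniform $L^2(\Omega)$ bound on $u$ from Lemma~\ref{L2u} is what tames the fluid nonlinearity. First I would apply the Helmholtz projection $\mathcal{P}$ to the third equation of \eqref{1} and test the resulting identity $u_t+\mathcal{P}(u\cdot\nabla u)=-Au+\mathcal{P}(n\nabla\phi)+\mathcal{P}f$ against $Au=-\mathcal{P}\Delta u$; here the pressure drops out because $\nabla P\perp L^2_\sigma\ni Au$. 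Using $\langle Au,u\rangle_{L^2}=\int_\Omega|\nabla u|^2$ and $\int_\Omega v\cdot Au=\int_\Omega\mathcal{P}v\cdot Au$ for any $v\in L^2(\Omega;\mathbb{R}^2)$, this gives
\begin{align*}
    \frac12\frac{d}{dt}\int_\Omega|\nabla u|^2+\int_\Omega|Au|^2 = -\int_\Omega\mathcal{P}(u\cdot\nabla u)\cdot Au + \int_\Omega (n\nabla\phi)\cdot Au + \int_\Omega f\cdot Au.
\end{align*}
The two forcing terms are routine: since $\|\mathcal{P}(n\nabla\phi)\|_{L^2}\le\|\nabla\phi\|_{L^\infty(\Omega)}\|n\|_{L^2(\Omega)}$ and $\|\mathcal{P}f\|_{L^2}\le|\Omega|^{1/2}\|f\|_{L^\infty(\Omega\times(0,\infty))}$, Young's inequality bounds their sum by $\frac13\int_\Omega|Au|^2+C\int_\Omega n^2+C$.

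The delicate term is the Navier--Stokes convection $\int_\Omega\mathcal{P}(u\cdot\nabla u)\cdot Au$. I would estimate $\|u\cdot\nabla u\|_{L^2}\le\|u\|_{L^4}\|\nabla u\|_{L^4}$ and then invoke the two-dimensional Gagliardo--Nirenberg inequalities $\|u\|_{L^4}\le C\|u\|_{L^2}^{1/2}\|\nabla u\|_{L^2}^{1/2}$ and $\|\nabla u\|_{L^4}\le C\|\nabla u\|_{L^2}^{1/2}\|u\|_{W^{2,2}}^{1/2}+C\|\nabla u\|_{L^2}$ together with the Stokes regularity estimate $\|u\|_{W^{2,2}(\Omega)}\le C\|Au\|_{L^2(\Omega)}$ (valid under the no-slip condition). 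After inserting the uniform bound $\|u(\cdot,t)\|_{L^2(\Omega)}\le C_1$ from Lemma~\ref{L2u} and applying Young's inequality, this yields
\begin{align*}
    \left|\int_\Omega\mathcal{P}(u\cdot\nabla u)\cdot Au\right| \le \frac13\int_\Omega|Au|^2 + C\left(\int_\Omega|\nabla u|^2\right)^2 + C\int_\Omega|\nabla u|^2 .
\end{align*}
Collecting the estimates and dropping $\int_\Omega|Au|^2$ from the left, $y$ obeys $y'(t)\le h(t)y(t)+g(t)$ with $h(t):=C\bigl(\int_\Omega|\nabla u(\cdot,t)|^2+1\bigr)$ and $g(t):=C\bigl(\int_\Omega n^2(\cdot,t)+1\bigr)$, both continuous and nonnegative. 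Then \eqref{L2u-2} gives $\int_t^{t+\tau}y(s)\,ds\le C$, hence $\int_t^{t+\tau}h(s)\,ds\le C$, while \eqref{LlogL-p-1''} together with $\log^{1-\eta}(n+e)\ge1$ gives $\int_t^{t+\tau}\int_\Omega n^2\le C_1$, hence $\int_t^{t+\tau}g(s)\,ds\le C$; Lemma~\ref{ODI'} then delivers \eqref{Lpu-1}.

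Finally, \eqref{Lpu-2} is immediate from \eqref{Lpu-1}, the $L^2$ bound of Lemma~\ref{L2u}, and the Sobolev embedding $W^{1,2}(\Omega)\hookrightarrow L^p(\Omega)$, which holds for every $p\in[1,\infty)$ in dimension two. For the case $T_{\rm max}=\infty$, one repeats the whole argument on $(T,\infty)$, replacing \eqref{L2u-2}, \eqref{LlogL-p-1''}, and the uniform $L^2$ bound on $u$ by the initial-data-independent estimates \eqref{L2u-5}, \eqref{LlogL-p-2''}, and \eqref{L2u-4}, which yields \eqref{Lpu-3} and \eqref{Lpu-4}. The main obstacle is precisely the Navier--Stokes nonlinearity: it forces a superlinear term $(\int_\Omega|\nabla u|^2)^2$ into the differential inequality which cannot be absorbed by an exponential-decay argument, so it must be carried as the coefficient $h(t)y(t)$ with $h$ only integrable in time over intervals of length $\tau$ — which is exactly why Lemma~\ref{ODI'} and the spatio-temporal control \eqref{L2u-2}, rather than Lemma~\ref{ODI}, are the right tools here.
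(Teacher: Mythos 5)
Your proposal is correct and follows essentially the same route as the paper: testing the projected momentum equation against $Au$, absorbing the convection term into $\frac{d}{dt}\int_\Omega|\nabla u|^2\le h(t)\int_\Omega|\nabla u|^2+g(t)$ with $h\sim\int_\Omega|\nabla u|^2$ and $g\sim\int_\Omega n^2$, and closing via Lemma~\ref{ODI'} using the spatio-temporal bounds \eqref{L2u-2}/\eqref{L2u-5} and \eqref{LlogL-p-1''}/\eqref{LlogL-p-2''}, then Sobolev embedding for \eqref{Lpu-2} and \eqref{Lpu-4}. The only cosmetic differences are that the paper delegates \eqref{Lpu-1} to \cite{YM-2016}[Lemma 3.6] and estimates the nonlinearity through $\|u\|_{L^\infty}^2\le C\|u\|_{L^2}\|Au\|_{L^2}$ rather than your $L^4$--$L^4$ splitting with Stokes regularity, which yields the same quadratic term $\bigl(\int_\Omega|\nabla u|^2\bigr)^2$.
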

\begin{proof}
    The proof of \eqref{Lpu-1} is completely similar to \cite{YM-2016}[Lemma 3.6]. To prove \eqref{Lpu-2}, we simply apply Sobolev's inequality to obtain that 
    \begin{align} \label{Lpu.1}
        \int_\Omega |u|^p \leq  C_1 \left ( \int_\Omega |\nabla u|^2 \right )^{\frac{p}{2}} +C_1 \left ( \int_\Omega |u|^2 \right )^{\frac{p}{2}} \leq C_2 \qquad \text{for all }t\in (0,T_{\rm max}), 
    \end{align}
    where $C_1>0$ and $C_2>0$. To prove \eqref{Lpu-3}, we continue to follow the argument in \cite{YM-2016}[Lemma 3.6]; however, we must carefully track the dependence of each constant on the initial data. Consequently, we provide a detailed proof here. Now suppose that $T_{\rm max}=\infty$. By taking the Helmholtz projection $\mathcal{P}$ to the third equation in \eqref{1}, and then testing the resulting expression against $Au$, we arrive at the following
\begin{align} \label{Lpu.2}
    \frac{1}{2} \frac{d}{dt}\int_\Omega |A^\frac{1}{2}u|^2
    &= -\int_\Omega |Au|^2 
    - \int_\Omega  \mathcal{P}((u\cdot \nabla)u) \cdot Au
    + \int_\Omega  \mathcal{P}(n \nabla \phi) \cdot Au
    +\int_\Omega  \mathcal{P}f \cdot Au \notag \\
    &\leq -\frac{1}{4}\int_\Omega |Au|^2
    + \int_\Omega |(u \cdot \nabla )u|^2 
    + C_\phi \int_\Omega n^2
    + \int_\Omega |f|^2 ,
\end{align}
where $C_\phi>0$ depends only on $\|\nabla\phi\|_{L^\infty(\Omega)}$. From Lemma \ref{L2u}, one can find $t_0>0$ such that 
    \begin{align*}
        \int_\Omega |u(\cdot, t)|^2 \leq C_3 \qquad \text{for all }t>t_0, 
    \end{align*}
    where $C_3>0$ independent of initial data. Using this, the Gagliardo–Nirenberg interpolation inequality and Young's inequality leads to
    \begin{align} \label{Lpu.3}
         \int_\Omega |(u \cdot \nabla )u|^2 &\leq \left \| u \right \|^2_{L^\infty(\Omega)} \int_\Omega |\nabla u|^2 \notag \\
         &\leq C_4 \left ( \int_\Omega |u|^2 \right )^{\frac{1}{2}} \left ( \int_\Omega |Au|^2 \right )^\frac{1}{2}\int_\Omega |\nabla u|^2  \notag \\
         &\leq \frac{1}{4} \int_\Omega |Au |^2 + C_5 \left ( \int_\Omega |\nabla u|^2 \right )^2 \qquad \text{for all }t >t_0, 
    \end{align}
    where $C_4$ and $C_5$ are positive constants independent of initial data. Setting 
    \begin{align*}
        h_1(t) = 2C_5\int_\Omega |\nabla u(\cdot,t)|^2,
    \end{align*}
    and
    \begin{align*}
        h_2(t)= 2C_\phi \int_\Omega n^2(\cdot,t) + 2\int_\Omega |f(\cdot,t)|^2, 
    \end{align*}
    then from \eqref{LlogL-p-2''} and \eqref{L2u-5}, there exists $T\geq t_0$ such that
    \begin{align}\label{Lpu.4}
        \int_t ^{t+1} h_1(s)\, ds  \leq C_6 \qquad \text{for all }t>T,
    \end{align}
    and 
    \begin{align}\label{Lpu.5}
        \int_t ^{t+1} h_2(s)\, ds \leq C_7 \qquad \text{for all }t>T,
    \end{align}
    where $C_6$ and $C_7$ are positive constants independent of initial data. 
 From \eqref{Lpu.2} and \eqref{Lpu.3}, together with the identity 
$\int_\Omega |A^{1/2}u|^2 = \int_\Omega |\nabla u|^2$, it follows that
the differential inequality below holds
    \begin{align*}
        \frac{d}{dt}\int_\Omega |\nabla u(\cdot,t)|^2 \leq h_1(t)\int_\Omega |\nabla u(\cdot,t)|^2 +h_2(t) \qquad \text{for all }t>T.
    \end{align*}
By invoking \eqref{Lpu.4}, \eqref{Lpu.5}, and \eqref{L2u-5}, and then 
applying Lemma \ref{ODI'}, we directly obtain \eqref{Lpu-3}. After increasing $T$ if necessary so that \eqref{L2u-4} also holds 
for all $t>T$, Sobolev's inequality yields, for every $p>2$,
\[
    \int_\Omega |u(\cdot,t)|^p
    \leq C(p)\left(\int_\Omega |\nabla u(\cdot,t)|^2\right)^{\frac p2}
      +C(p)\left(\int_\Omega |u(\cdot,t)|^2\right)^{\frac p2}
    \leq C_8
    \qquad \text{for all }t>T,
\]
where $C_8>0$ is independent of the initial data. This proves \eqref{Lpu-4}.
\end{proof}
\section{\texorpdfstring{ Boundedness of $n$ in $L^p$}{ L^p Boundedness of Solutions}} \label{S4'}

Before proving the \( L^p \) boundedness of \( u \), we first state the following auxiliary lemma.
\begin{lemma} \label{gradc}
For any $p>1$, there exist $C_1=C_1(p)>0$ independent of initial data and $C_2=C_2(p)>0$ such that
    \begin{align}\label{gradc-1}
    \frac{d}{dt}\int_\Omega|\nabla c|^{2p} + \int_\Omega|\nabla c|^{2p}+\frac{p-1}{p}\int_\Omega |\nabla |\nabla c|^p|^2 \leq C_1 \int_\Omega n^2 |\nabla c|^{2p-2}+C_2
\end{align}
for all $t\in (0, T_{\rm max})$. Moreover, if $T_{\rm max}= \infty$ then there exists $T>0$ such that \eqref{gradc-1} holds for all $t>T$ with for some positive constant $C_2$ independent of initial data. 
\end{lemma}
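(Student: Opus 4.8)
The plan is to test the equation for $c$ against $|\nabla c|^{2p-2}\Delta c$, or equivalently to differentiate $\int_\Omega |\nabla c|^{2p}$ in time directly. Writing $c_t = \Delta c - \alpha c + \beta n - u\cdot\nabla c$ and using the pointwise identity $\nabla c \cdot \nabla c_t = \frac12 \partial_t |\nabla c|^2$, one computes
\[
\frac{1}{2p}\frac{d}{dt}\int_\Omega |\nabla c|^{2p}
= \int_\Omega |\nabla c|^{2p-2}\nabla c\cdot\nabla\bigl(\Delta c - \alpha c + \beta n - u\cdot\nabla c\bigr).
\]
The first term is integrated by parts to produce the good dissipation. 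Using the standard identity $\nabla c\cdot\nabla\Delta c = \frac12\Delta|\nabla c|^2 - |D^2 c|^2$ together with $|\nabla|\nabla c|^p|^2 = \frac{p^2}{4}|\nabla c|^{2p-4}\,|\nabla|\nabla c|^2|^2$ and the pointwise bound $|\nabla|\nabla c|^2|^2 \le 4|\nabla c|^2|D^2c|^2$, one obtains the negative contributions $-\tfrac{p-1}{p}\int_\Omega |\nabla|\nabla c|^p|^2$ (after also absorbing a boundary term — here one invokes the standard fact that $\frac{\partial|\nabla c|^2}{\partial\nu}\le C_\Omega|\nabla c|^2$ on $\partial\Omega$ for domains with smooth boundary, controlled via the trace/interpolation argument as in Winkler's work). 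The $-\alpha c$ term gives a harmless $-\alpha p\int_\Omega c|\nabla c|^{2p-2}\cdot(\ldots)$-type contribution that is nonnegative up to lower-order terms, or is simply discarded after integration by parts.

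Next I would treat the two remaining terms. The chemotaxis-signal coupling term $\beta p\int_\Omega |\nabla c|^{2p-2}\nabla c\cdot\nabla n$ is integrated by parts to $-\beta p\int_\Omega n\,\nabla\cdot\bigl(|\nabla c|^{2p-2}\nabla c\bigr)$, and then Young's inequality splits it into a piece absorbed by the gradient dissipation $\tfrac{p-1}{p}\int_\Omega|\nabla|\nabla c|^p|^2$ and a piece of the form $C_1\int_\Omega n^2|\nabla c|^{2p-2}$, which is precisely the right-hand side term allowed in the statement. The fluid-transport term $-p\int_\Omega |\nabla c|^{2p-2}\nabla c\cdot\nabla(u\cdot\nabla c)$ is the delicate one: after integrating by parts and using $\nabla\cdot u = 0$, it reduces to an expression controlled by $\int_\Omega |u|\,|\nabla c|^{2p-1}|D^2c|$; by Hölder and Young this is bounded by $\varepsilon\int_\Omega|\nabla c|^{2p-4}|\nabla c|^2|D^2c|^2 + C\int_\Omega |u|^2|\nabla c|^{2p-2}$, i.e. $\varepsilon'\int_\Omega|\nabla|\nabla c|^p|^2 + C\int_\Omega|u|^2|\nabla c|^{2p-2}$. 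One then invokes the already-established $L^q$ bounds on $u$ (Lemma~\ref{Lpu}, giving $u\in L^\infty((0,T_{\rm max});L^q)$ for all $q<\infty$) and the $L^{2p}$ bound on $\nabla c$ from Lemma~\ref{LlogL-p} together with Gagliardo--Nirenberg to interpolate $\int_\Omega|u|^2|\nabla c|^{2p-2}$ between the gradient dissipation and the known bounds, absorbing the top-order part into the left side and leaving a constant $C_2$. Finally, to get the extra $+\int_\Omega|\nabla c|^{2p}$ on the left, one adds and subtracts: since the natural dissipation already contains (a multiple of) $\int_\Omega|\nabla c|^{2p}$ through the $-\alpha$ term and, more robustly, via Gagliardo--Nirenberg (controlling $\|\nabla c\|_{L^{2p}}^{2p}$ by $\|\nabla|\nabla c|^p\|_{L^2}^2$ plus lower-order terms using the uniform $L^2$-bound on $\nabla c$), one can always reserve a fraction of $\tfrac{p-1}{p}\int_\Omega|\nabla|\nabla c|^p|^2$ to produce $+\int_\Omega|\nabla c|^{2p}$ at the cost of enlarging $C_2$.

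For the last sentence of the statement, under $T_{\rm max}=\infty$ every ingredient above has an initial-data-independent counterpart valid for $t>T$: the $L^1$ and $L^2$ bounds on $n$ and $c$ from Lemmas~\ref{L1}, \ref{c2}, the $L^{2p}$ bound on $\nabla c$ from \eqref{LlogL-p-2'}, and the $L^p$ bound on $u$ from \eqref{Lpu-4}. Re-running the same estimates with these eventual bounds yields \eqref{gradc-1} for all $t>T$ with $C_2$ independent of the initial data (only $C_1$ was already initial-data-independent by construction, since it comes purely from Young's inequality constants). The main obstacle, as anticipated, is the fluid-transport term: one must be careful that the exponents in the Gagliardo--Nirenberg interpolation of $\int_\Omega|u|^2|\nabla c|^{2p-2}$ are subcritical, which in two dimensions they are precisely because $u$ enjoys $L^q$-bounds for every finite $q$ (Lemma~\ref{Lpu}), so the $|\nabla c|$-factor only needs an exponent strictly below the one controlled by $\|\nabla|\nabla c|^p\|_{L^2}^2$, leaving room for the absorption.
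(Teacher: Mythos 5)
Your proposal follows essentially the same route as the paper: differentiate $\int_\Omega|\nabla c|^{2p}$, use the identity $\nabla c\cdot\nabla\Delta c=\tfrac12\Delta|\nabla c|^2-|D^2c|^2$ with the boundary estimate $\frac{\partial|\nabla c|^2}{\partial\nu}\le C_\Omega|\nabla c|^2$ absorbed by a trace/Young argument, integrate the $\beta n$ term by parts to produce $C_1\int_\Omega n^2|\nabla c|^{2p-2}$, and absorb the fluid-transport term and the missing $+\int_\Omega|\nabla c|^{2p}$ via Gagliardo--Nirenberg against a reserved fraction of $\int_\Omega|\nabla|\nabla c|^p|^2$, using the $L^4$ (indeed all $L^q$) bound on $u$ and the uniform $L^2$ bound on $\nabla c$. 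Two small corrections are needed. First, in the Young step for the transport term the split $|u|\,|\nabla c|^{2p-1}|D^2c|\le\varepsilon|\nabla c|^{2p-2}|D^2c|^2+C|u|^2|\nabla c|^{2p-2}$ is off by a factor $|\nabla c|$: the factor that can be absorbed by the dissipation is $|\nabla c|^{p-1}|D^2c|$, so the complementary term is $C|u|^2|\nabla c|^{2p}$ (this is exactly the term the paper faces), and your subsequent H\"older--Gagliardo--Nirenberg absorption handles it just as well, namely $\int_\Omega|u|^2|\nabla c|^{2p}\le\|u\|_{L^4}^2\bigl(\int_\Omega|\nabla c|^{4p}\bigr)^{1/2}$, interpolated against $\|\nabla|\nabla c|^p\|_{L^2}^2$ and $\|\nabla c\|_{L^2}$. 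Second, do not invoke an ``$L^{2p}$ bound on $\nabla c$'' at this stage: Lemma \ref{LlogL-p} only provides the $L^2$ bound \eqref{LlogL-p-1'}, and an a priori $L^{2p}$ bound would be circular, since it is obtained in Lemma \ref{Lp} precisely by means of the present lemma; the $L^2$ bound, which your parenthetical interpolation actually uses, is all that is required. With these repairs the argument, including the initial-data-independent version for $T_{\rm max}=\infty$ via \eqref{Lpu-4} and \eqref{LlogL-p-2'}, coincides with the paper's proof.
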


\begin{proof}
    Differentiating the functional $y(t):= \int_\Omega |\nabla c|^{2p}$ gives us that
    \begin{align} \label{gradc.1}
        y'(t) &= 2p \int_\Omega |\nabla c|^{2p-2} \nabla c \cdot \nabla c_t \notag \\
        &=2p \int_\Omega |\nabla c|^{2p-2} \nabla c \cdot \nabla  \left ( -u \cdot \nabla c +\Delta c- \alpha c +\beta n \right ) \qquad \text{for all }t\in (0,T_{\rm max}).
    \end{align}
    Thanks to the following identity
    \begin{align*}
        \nabla c \cdot \nabla \Delta c = \frac{1}{2} \Delta (|\nabla c|^2)- |D^2 c|^2,
    \end{align*}
   we obtain that 
    \begin{align} \label{gradc.2}
        2p \int_\Omega |\nabla c|^{2p-2} \nabla c \cdot \nabla \Delta c &= p \int_\Omega |\nabla c|^{2p-2} \Delta (|\nabla c|^2) - 2p \int_\Omega |\nabla c|^{2p-2}|D^2c|^2 \notag \\
        &= - \frac{4(p-1)}{p} \int_\Omega |\nabla |\nabla c|^p|^2 + p\int_{\partial \Omega} |\nabla c|^{2p-2}\frac{\partial |\nabla c|^2}{\partial \nu } \,dS \notag \\
        &\quad- 2p \int_\Omega |\nabla c|^{2p-2}|D^2c|^2.
    \end{align}
    The inequality $\frac{\partial |\nabla c|^{2}}{\partial \nu} \leq C_1 |\nabla c|^2 ,$ (see \cite{Souplet-13}[Lemma 4.2])  for some $C_1>0$ depending only on $\Omega$, implies that
\[
p\int_{\partial \Omega} \frac{\partial |\nabla c|^2 }{\partial \nu}|\nabla c|^{2p-2}\, dS \leq pC_1\int_{\partial \Omega }|\nabla c|^{2p} \, dS  .
\]
Setting \( g := |\nabla c|^p \) and applying the trace embedding theorem 
\( W^{1,1}(\Omega) \hookrightarrow L^1(\partial \Omega) \) together with 
Young’s inequality, we obtain positive constants \( C_2 \) and \( C_3 \) such that
\begin{align*}
    pC_1\int_{\partial \Omega} g^2 \, dS &\leq C_2 \int_\Omega g|\nabla g| +C_2\int_\Omega g^2 \notag \\
    &\leq \frac{p-1}{p} \int_\Omega |\nabla g|^2 +C_3\int_\Omega g^2.
\end{align*}
Therefore, we have
\begin{align}  \label{gradc.3}
    pC_1\int_{\partial \Omega }|\nabla c|^{2p} \, dS \leq \frac{p-1}{p} \int_{\Omega}|\nabla|\nabla c|^p|^2 +C_3 \int_\Omega |\nabla c|^{2p}.  
\end{align}
Integrating by parts and applying Young's inequality, we deduce that 
\begin{align} \label{gradc.4}
    2p \beta  \int_\Omega |\nabla c|^{2p-2} \nabla c \cdot \nabla n &=-2p\beta \int_\Omega n \nabla |\nabla c|^{2p-2} \cdot \nabla c -2p\beta \int_\Omega n |\nabla c|^{2p-2}\Delta c \notag \\
    &= -4\beta (p-1)\int_\Omega n|\nabla c|^{p-2} \nabla |\nabla c|^p \cdot \nabla c-2p\beta \int_\Omega n |\nabla c|^{2p-2}\Delta c \notag \\
    &\leq \frac{p-1}{p}\int_\Omega |\nabla |\nabla c|^p|^2 +p\int_\Omega |\nabla c|^{2p-2}|D^2c|^2 +C_4 \int_\Omega n^2 |\nabla c|^{2p-2},
\end{align}
where $C_4>0$ depends only on fixed parameters and $p$ and the last inequality holds due to the inequality $|\Delta c|^2 \leq 2 |D^2 c|^2$. Now, we apply Young's inequality to deal with the transport term as follows:
\begin{align} \label{gradc.5}
    -2p \int_\Omega |\nabla c|^{2p-2} \nabla c \cdot \nabla (u \cdot \nabla c) &= 2p \int_\Omega \nabla |\nabla c|^{2p-2} \cdot \nabla c ( u \cdot \nabla c) +2p \int_\Omega |\nabla c|^{2p-2} \Delta c (u \cdot \nabla c ) \notag \\
    &\leq 4(p-1) \int_\Omega |\nabla |\nabla c|^p| |\nabla c|^p |u| +2p \int_\Omega |\nabla c|^{2p-1}|\Delta c||u| \notag \\
    &\leq \frac{p-1}{2p}\int_\Omega |\nabla |\nabla c|^p|^2+C_5 \int_\Omega |u|^2 |\nabla c|^{2p}+ p \int_\Omega |\nabla c|^{2p-2}|D^2c|^2
\end{align}
where $C_5>0$ independent of initial data. By H\"older's inequality, the two-dimensional Gagliardo--Nirenberg interpolation 
inequality applied to $|\nabla c|^p$, Young's inequality, and the estimates 
\eqref{Lpu-2} and \eqref{LlogL-p-1'}, we have
\begin{align}\label{gradc.6}
    &(1+C_3-2p\alpha)_+ \int_\Omega |\nabla c|^{2p}
    + C_5 \int_\Omega |u|^2 |\nabla c|^{2p} \notag\\
    &\leq
    \left((1+C_3-2p\alpha)_+|\Omega|^{\frac12}
    +C_5\|u\|_{L^4(\Omega)}^2\right)
    \left(\int_\Omega |\nabla c|^{4p}\right)^{\frac12} \notag\\
    &\leq
    C_6
    \left(\int_\Omega |\nabla |\nabla c|^p|^2\right)^{\frac{2p-1}{2p}}
    \left(\int_\Omega |\nabla c|^2\right)^{\frac12}
    +C_6\left(\int_\Omega |\nabla c|^2\right)^p \notag\\
    &\leq
    \frac{p-1}{2p}\int_\Omega |\nabla |\nabla c|^p|^2+C_7 ,
\end{align}
where $C_6>0$ and $C_7>0$.
Collecting \eqref{gradc.1}--\eqref{gradc.6}, we arrive at
\begin{align*}
    y'(t)+y(t)+\frac{p-1}{p}\int_\Omega |\nabla |\nabla c|^p|^2
    \leq C_4 \int_\Omega n^2|\nabla c|^{2p-2}+C_7
    \qquad \text{for all }t\in(0,T_{\rm max}).
\end{align*}
Now, if $T_{\rm max}= \infty$ then from \eqref{Lpu-4} and \eqref{LlogL-p-2'}, one can find $T>0$ such that 
\begin{align*}
    \int_\Omega |u(\cdot,t)|^4 \leq C_8 \qquad \text{for all }t>T,
\end{align*}
and 
\begin{align*}
    \int_\Omega |\nabla c(\cdot,t)|^2 \leq C_9 \qquad \text{for all }t>T,
\end{align*}
where $C_8>0$ and $C_9>0$ independent of initial data. Thereupon in view of \eqref{gradc.6} we obtain that 
\begin{align*}
    y'(t) + y(t)+\frac{p-1}{p}\int_\Omega |\nabla |\nabla c|^p|^2 \leq C_4 \int_\Omega n^2 |\nabla c|^{2p-2}+C_{10} \qquad \text{for all } t >T, 
\end{align*}
where $C_{10}>0$ independent of initial data. The proof is now complete.
\end{proof}

We are now in a position to derive a key estimate in the proof of our main results, namely the $L^p$ boundedness of $n$ and, more importantly, the boundedness of $\int_\Omega n^p c^{-q}$ for all $p > 1$ and $0 < q < p - 1$. This estimate will be essential in establishing the $L^\infty$ boundedness of $n$ in Lemma~\ref{c-l}.

\begin{lemma} \label{Lp}
    For any $p>1$ and $0<q<p-1$, there exists $C_1>0$ such that 
    \begin{align} \label{Lp-1}
        \int_\Omega n^p(\cdot,t) + \int_\Omega n^p(\cdot,t)c^{-q}(\cdot,t) + \int_\Omega |\nabla c  (\cdot,t)|^{2p} \leq C_1 \qquad \text{for all } t\in (0,T_{\rm max}).
    \end{align}
    Moreover, if $T_{\rm max}= \infty$ then there exists $T>0$ such that
     \begin{align} \label{Lp-2}
        \int_\Omega n^p(\cdot,t) + \int_\Omega n^p(\cdot,t)c^{-q}(\cdot,t) + \int_\Omega |\nabla c(\cdot,t)|^{2p} \leq C_2 \qquad \text{for all } t>T,
    \end{align}
    where $C_2>0$ independent of initial data.
\end{lemma}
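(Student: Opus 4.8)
\textbf{Proof proposal for Lemma \ref{Lp}.} The plan is to run a coupled energy estimate for the functional
\[
G(t) := \int_\Omega n^p + \int_\Omega \frac{n^p}{c^q} + \int_\Omega |\nabla c|^{2p},
\]
combining three differential inequalities: one for $\int_\Omega n^p$, one for $\int_\Omega n^p c^{-q}$, and the one for $\int_\Omega |\nabla c|^{2p}$ already furnished by Lemma \ref{gradc}. First I would test the first equation of \eqref{1} against $n^{p-1}$ (using $\nabla\cdot u=0$ to kill the transport term) and against $c^{-q}$-weighted powers of $n$. Testing against $n^{p-1}c^{-q}$, the Laplacian gives a good dissipation term $-\frac{4(p-1)}{p^2}\int_\Omega |\nabla n^{p/2}|^2 c^{-q}$ plus cross terms involving $\nabla n^{p} \cdot \nabla c\, c^{-q-1}$; the chemotaxis term $-\chi\nabla\cdot(n c^{-k}\nabla c)$ produces terms with $n^p |\nabla c|^2 c^{-q-1-k}$ and $n^p|\nabla c|^2 c^{-q-2}$ after using $k<1$ and Young's inequality; the time derivative of $c^{-q}$ brings in $\int_\Omega n^p c^{-q-1} c_t = \int_\Omega n^p c^{-q-1}(\Delta c - u\cdot\nabla c -\alpha c + \beta n)$, whose crucial feature is the term $-\beta\int_\Omega n^{p+1} c^{-q-1}$ with a \emph{favorable sign}. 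The condition $0<q<p-1$ is precisely what guarantees, via Young's inequality $n^p c^{-q} \le \varepsilon n^{p+1}c^{-q-1} + C_\varepsilon$ (valid since the exponents on $n$ and $c$ both increase by one), that the singular reaction sink $-\beta\int_\Omega n^{p+1}c^{-q-1}$ dominates and absorbs all the lower-order singular terms generated by the chemotactic and transport contributions; this is the structural reason the approach closes.

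Next I would assemble the three inequalities. For $\int_\Omega n^p$: test against $n^{p-1}$, getting $-c_p\int_\Omega|\nabla n^{p/2}|^2$, a chemotaxis term $\chi(p-1)\int_\Omega n^{p-1} c^{-k}\nabla n\cdot\nabla c$ controlled by Young's inequality and Lemma \ref{LK-1} (to estimate $\int_\Omega |\nabla c|^{2p}c^{-p}$ type quantities by $\int_\Omega|\Delta c|^p + \int_\Omega c^p$ — but here we want everything in terms of $\nabla|\nabla c|^p$, so instead use Hölder and interpolate), and the sub-logistic reaction $rn^p - \mu n^{p+1}/\log^\eta(n+e)$, where the $-\mu n^{p+1}/\log^\eta$ term beats $rn^p$ up to a constant. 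For $\int_\Omega |\nabla c|^{2p}$: Lemma \ref{gradc} already gives
\[
\frac{d}{dt}\int_\Omega|\nabla c|^{2p} + \int_\Omega|\nabla c|^{2p} + \frac{p-1}{p}\int_\Omega|\nabla|\nabla c|^p|^2 \le C_1\int_\Omega n^2|\nabla c|^{2p-2} + C_2,
\]
and the right-hand side $\int_\Omega n^2|\nabla c|^{2p-2}$ is handled by Hölder splitting $n^2|\nabla c|^{2p-2}$ and then by a Gagliardo--Nirenberg interpolation of $\||\nabla c|^p\|$ against $\|\nabla|\nabla c|^p\|_{L^2}$ and $\|\nabla c\|_{L^2}$ (bounded by Lemma \ref{LlogL-p}), together with an $L^{p+1}$ or $L^2\log^{1-\eta}$-type bound on $n$ coming from the reaction dissipation in the $n^p$ estimate; crucially, the powers must be arranged so that the resulting $\int_\Omega|\nabla|\nabla c|^p|^2$ term has coefficient strictly less than $\frac{p-1}{p}$ and can be absorbed.

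Adding the three estimates with suitable small weights yields a differential inequality of the form $G'(t) + G(t) \le H(t)$ where, after all absorptions, $H$ is controlled by the spatio-temporal quantity $\int_\Omega n^2\log^{1-\eta}(n+e)$ from Lemma \ref{LlogL-p} (estimate \eqref{LlogL-p-1''}) plus constants; alternatively one gets $G'(t)+G(t)\le h(t)G(t)+g(t)$ with $h,g$ having bounded $\tau$-time-integrals, so that Lemma \ref{ODI} or Lemma \ref{ODI'} applies and gives \eqref{Lp-1}. For the eventual-boundedness statement \eqref{Lp-2} under $T_{\rm max}=\infty$, I would rerun the identical computation past the time $T$ furnished by Lemmas \ref{L1}, \ref{LlogL-p}, \ref{Lp-v}, \ref{Lpu}, using the initial-data-independent bounds \eqref{L1-4}, \eqref{LlogL-p-2''}, \eqref{Lp-v-2}, \eqref{Lpu-4} in place of their finite-time analogues, which makes the resulting constant independent of the initial data. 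The main obstacle I anticipate is the bookkeeping in the $\int_\Omega n^p c^{-q}$ estimate: one must carefully track every singular power of $c$ produced by the chemotaxis term, by $\nabla(c^{-q})$, and by $\partial_t(c^{-q})$, and verify via repeated Young's inequalities (splitting each offending term into a small multiple of $\int_\Omega n^{p+1}c^{-q-1}$, a small multiple of $\int_\Omega |\nabla n^{p/2}|^2 c^{-q}$ or $\int_\Omega|\nabla|\nabla c|^p|^2$, and a constant) that the single good sink term suffices to absorb them all — this is exactly where the hypothesis $k\in(0,1)$ and $0<q<p-1$ are used, and getting the exponent arithmetic to close simultaneously in all three estimates is the delicate point.
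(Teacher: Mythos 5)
Your overall architecture coincides with the paper's: the same coupled functional $\int_\Omega n^p+\int_\Omega n^pc^{-q}+\int_\Omega|\nabla c|^{2p}$, the same tests of the first two equations, Lemma \ref{gradc} for the $\int_\Omega|\nabla c|^{2p}$ part, Gagliardo--Nirenberg absorption of $\int_\Omega|\nabla c|^{2p+2}$ into $\int_\Omega|\nabla|\nabla c|^p|^2$ using the uniform bound on $\int_\Omega|\nabla c|^2$, and a rerun past a large time for \eqref{Lp-2}. However, the two places where you locate the reason the scheme closes are misidentified, and as written the absorptions do not close. First, the role of $0<q<p-1$ is not what you claim. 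The sink $-q\beta\int_\Omega n^{p+1}c^{-q-1}$ only swallows the zero-order term $(rp+q\alpha)\int_\Omega n^pc^{-q}$ (with an $\int_\Omega c^{p-q}$ remainder handled by Lemma \ref{Lp-v}); it cannot absorb the genuinely dangerous contribution, namely the gradient cross term $2pq\int_\Omega n^{p-1}c^{-q-1}\nabla n\cdot\nabla c$ produced by $\nabla(c^{-q})$ against $\Delta n$ and by $\Delta c$ in the $\partial_t(c^{-q})$ part. That term must be split by Young between the two dissipation terms the computation itself generates, $-p(p-1)\int_\Omega n^{p-2}c^{-q}|\nabla n|^2$ and $-q(q+1)\int_\Omega n^pc^{-q-2}|\nabla c|^2$, and this is possible precisely because $q<p-1$ is equivalent to $\frac{p^2q^2}{p(p-1)}<q(q+1)$. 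Your list of absorbers (small multiples of $\int_\Omega n^{p+1}c^{-q-1}$, of $\int_\Omega|\nabla n^{p/2}|^2c^{-q}$, of $\int_\Omega|\nabla|\nabla c|^p|^2$, and constants) omits the good term $\int_\Omega n^pc^{-q-2}|\nabla c|^2$ entirely; but the Young conjugate of the cross term against the $|\nabla n|^2$ dissipation is exactly $n^pc^{-q-2}|\nabla c|^2$, which is too singular in $c$ to be controlled by anything else you allow, since no positive lower bound on $c$ is available in the fluid setting and Lemma \ref{LK-1} only reaches $|\nabla c|^{2p}/c^{p}$.

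The second gap is the $\int_\Omega n^{p+1}$ term. Forcing the coefficient of $\int_\Omega|\nabla c|^{2p+2}$ (coming from $\int_\Omega n^2|\nabla c|^{2p-2}$ and $\int_\Omega n^p|\nabla c|^2$) to be small enough for absorption produces, by Young, a term $c\int_\Omega n^{p+1}$ with a large, non-adjustable constant $c$. Your proposal disposes of it either via "the reaction dissipation" or by asserting that after all absorptions the right-hand side is controlled by $\int_\Omega n^2\log^{1-\eta}(n+e)$ so that Lemma \ref{ODI} or \ref{ODI'} applies; both fail. The sub-logistic dissipation $\mu p\int_\Omega n^{p+1}/\log^{\eta}(n+e)$ cannot dominate $c\int_\Omega n^{p+1}$ for large $n$, the quantity $\int_\Omega n^2\log^{1-\eta}(n+e)$ does not control $\int_\Omega n^{p+1}$ for $p>1$, and the standard Gagliardo--Nirenberg bound $\int_\Omega n^{p+1}\le C\bigl(\int_\Omega n\bigr)\int_\Omega|\nabla n^{p/2}|^2+\dots$ carries a coefficient proportional to the (not small) mass, which is exactly the "largeness of $\mu$" obstruction this paper is built to avoid. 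The missing ingredient is Lemma \ref{C52.ILGN} combined with the uniform $L\log L$ bound of Lemma \ref{LlogL-p}: choosing $\epsilon$ there proportional to the reciprocal of $c$ and of the $L\log L$ bound turns $c\int_\Omega n^{p+1}$ into a multiple of $\int_\Omega|\nabla n^{p/2}|^2$ strictly below the available dissipation, plus a constant, after which the differential inequality reduces to $y'+y\le C$ and plain Gronwall gives \eqref{Lp-1}; the same computation beyond a large time, with the data-independent versions of the auxiliary bounds, gives \eqref{Lp-2}.
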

\begin{proof}
    By using the first equation of \eqref{1} and integration by parts, we obtain that
        \begin{align}\label{Lp'.1}
            \frac{d}{dt} \int_\Omega n^p &= p \int_\Omega n^{p-1} \left ( \Delta n -\chi \nabla \cdot \left ( n  \frac{\nabla c}{c^k} \right ) +rn -\frac{\mu n^2}{\log^\eta(n+e)} \right ) \notag \\
            &= -\frac{4(p-1)}{p}\int_\Omega |\nabla n^\frac{p}{2}|^2 + 2\chi(p-1) \int_\Omega n^\frac{p}{2}c^{-k} \nabla n^\frac{p}{2} \cdot \nabla c \notag \\
            &\quad+ rp \int_\Omega n^p -\mu p \int_\Omega \frac{n^{p+1}} {\log^\eta(n+e)}\qquad \text{for all } t\in (0,T_{\rm max}).
        \end{align}
    By direct calculations and several applications of integration by parts, we find that
       \begin{align} \label{Lp'.2}
        \frac{d}{dt} \int_\Omega n^p c^{-q} &= p \int_\Omega n^{p-1}c^{-q}n_t -q \int_\Omega n^p c^{-q-1}c_t \notag \\
        &= p \int_\Omega n^{p-1}c^{-q} \left ( - u \cdot \nabla n+ \Delta n -\chi \nabla \cdot \left ( nc^{-k} \nabla c \right ) ++rn-\frac{\mu n^2}{\log^\eta(n+e)} \right ) \notag \\
        &\quad -q \int_\Omega n^p c^{-q-1} \left ( -u \cdot \nabla c + \Delta c- \alpha c +\beta n \right ) \notag \\
        &=- p(p-1)\int_\Omega n^{p-2}c^{-q}|\nabla n|^2 +2pq \int_\Omega n^{p-1}c^{-q-1} \nabla n \cdot \nabla c  \notag\\
        &\quad + p(p-1)\chi \int_\Omega n^{p-1}c^{-q-k} \nabla n \cdot \nabla c -pq\chi \int_\Omega n^pc^{-q-k-1}|\nabla c|^2 \notag\\
        &\quad+ (rp+q\alpha) \int_\Omega n^p c^{-q} -q(q+1)\int_\Omega n^p c^{-q-2}|\nabla c|^2  \notag \\
        &\quad  - q\beta \int_\Omega n^{p+1}c^{-q-1} -\mu p \int_\Omega \frac{n^{p+1}c^{-q}}{\log^\eta(n+e)}
                \end{align}    
        for all $t\in (0,T_{\rm max})$, where the transport terms cancel because
\[
0=\int_\Omega u\cdot\nabla(n^p c^{-q})
=p\int_\Omega n^{p-1}c^{-q}u\cdot\nabla n
-q\int_\Omega n^p c^{-q-1}u\cdot\nabla c .
\]      The condition $0<q<p-1$ entails that $\frac{p^2q^2}{p(p-1)}< q(q+1)$, which allows us to choose $\varepsilon_1 $ sufficiently small such that
    \[
    \frac{p^2q^2}{p(p-1)-\varepsilon_1} +2\varepsilon_1 -q(q+1)  \leq 0.
    \]
    Applying Young's inequality with $\varepsilon_1 > 0$, we obtain
    \begin{align}\label{Lp'.3}
        2pq \int_\Omega n^{p-1}c^{-q-1} \nabla n \cdot \nabla c \leq (p(p-1)-\varepsilon_1) \int_\Omega n^{p-2}c^{-q} |\nabla n|^2+ \frac{p^2q^2}{p(p-1)-\varepsilon_1} \int_\Omega n^p c^{-q-2}|\nabla c|^2,
    \end{align}
    and 
    \begin{align}\label{Lp'.4}
        p(p-1)\chi \int_\Omega n^{p-1}c^{-q-k} \nabla n \cdot \nabla c &\leq \varepsilon_1 \int_\Omega n^{p-2} c^{-q}|\nabla n|^2 + \frac{p^2(p-1)^2\chi^2}{4\varepsilon_1}\int_\Omega n^p c^{-q-2k}|\nabla c|^2 \notag \\
        &\leq \varepsilon_1 \int_\Omega n^{p-2} c^{-q}|\nabla n|^2 + \varepsilon_1 \int_\Omega n^p c^{-q-2}|\nabla c|^2 +C_1 \int_\Omega n^p|\nabla c|^2,
    \end{align}
    where $C_1=C_1(\varepsilon_1)>0$, and
    \begin{align}\label{Lp'.5}
        2\chi(p-1) \int_\Omega n^\frac{p}{2} c^{-k} \nabla n^\frac{p}{2} \cdot \nabla c &\leq \frac{p-1}{p} \int_\Omega |\nabla n^\frac{p}{2}|^2 + p(p-1)\chi^2 \int_\Omega n^p c^{-2k} |\nabla c|^2 \notag\\
        &\leq  \frac{p-1}{p} \int_\Omega |\nabla n^\frac{p}{2}|^2 +\varepsilon_1 \int_\Omega n^p c^{-q-2} |\nabla c|^2 +C_2 \int_\Omega n^p|\nabla c|^2,
    \end{align}
    where $C_2=C_2(\varepsilon_1)>0$.
    Using Young's inequality again and Lemma \ref{Lp-v}, we infer that 
    \begin{align}\label{Lp'.6}
        (1+rp+q\alpha) \int_\Omega n^p c^{-q} &\leq q\beta \int_\Omega n^{p+1}c^{-q-1}+C_3 \int_\Omega c^{p-q} \notag \\
         &\leq q\beta \int_\Omega n^{p+1}c^{-q-1}+C_4,
    \end{align}
    for some $C_3>0$ and $C_4>0$, and
    \begin{align}\label{Lp'.7}
        (rp+1) \int_\Omega n^p \leq \mu p \int_\Omega \frac{n^{p+1}}{\log^\eta(n+e)} + C_5,
    \end{align}
    where $C_5>0$. By Lemma \ref{gradc}, we can find positive constants $C_6$ and $C_7$ such that
     \begin{align}\label{Lp'.7'}
    \frac{d}{dt}\int_\Omega|\nabla c|^{2p} + \int_\Omega|\nabla c|^{2p}+\frac{p-1}{p}\int_\Omega |\nabla |\nabla c|^p|^2 \leq C_6 \int_\Omega n^2 |\nabla c|^{2p-2}+C_7 \qquad \text{for all } t\in (0,T_{\rm max}). 
\end{align}
    Applying the estimate \eqref{LlogL-p-1'} and the Gagliardo–Nirenberg interpolation inequality yields
    \begin{align}\label{Lp'.8}
        \int_\Omega |\nabla c|^{2p+2} &\leq C_8 \int_\Omega |\nabla |\nabla c|^p|^2 \int_\Omega |\nabla c|^2 +C_8 \left (\int_\Omega |\nabla c|^2 \right )^2 \notag \\
        &\leq C_9\int_\Omega |\nabla |\nabla c|^p|^2 +C_{10},
    \end{align}
    where $C_8$, $C_9$, and $C_{10}$ are positive constants. Employing Young's inequality and using \eqref{Lp'.8}, we have
    \begin{align}\label{Lp'.9}
        (C_1+C_2)\int_\Omega n^p |\nabla c|^2 + C_6\int_\Omega n^2 |\nabla c|^{2p-2} &\leq \frac{p-1}{pC_9} \int_\Omega |\nabla c|^{2p+2} +C_{11}\int_\Omega n^{p+1} \notag \\
        &\leq \frac{p-1}{p}\int_\Omega |\nabla |\nabla c|^p|^2 +C_{11}\int_\Omega n^{p+1}+C_{12},
    \end{align}
    where $C_{11}>0$ and $C_{12}>0$. Setting 
    \begin{align*}
        y(t)= \int_\Omega n^p + \int_\Omega n^pc^{-q}+ \int_\Omega |\nabla c|^{2p} \qquad \text{for all }t\in (0,T_{\rm max}).
    \end{align*}
   Collecting the above estimates, we arrive at
\begin{align}\label{Lp'.10}
   y'(t) +y(t) 
   &\leq  \left ( \frac{p^2q^2}{p(p-1)-\varepsilon_1} 
   +2\varepsilon_1 -q(q+1) \right )
   \int_\Omega n^p c^{-q-2}|\nabla c|^2 \notag\\
   &\quad -\frac{3(p-1)}{p} \int_\Omega |\nabla n^\frac{p}{2}|^2  
   +C_{11}\int_\Omega n^{p+1}+C_{13} \notag\\
   &\leq -\frac{3(p-1)}{p} \int_\Omega |\nabla n^\frac{p}{2}|^2  
   +C_{11}\int_\Omega n^{p+1}+C_{13}
   \qquad \text{for }t\in (0,T_{\rm max}),
\end{align}
where $C_{13}=C_4+C_5+C_7+C_{12}$. By \eqref{L1-1} and \eqref{LlogL-p-1}, and since 
$s\log(s+e)\leq s\log s+C(1+s)$ for $s\geq0$, we can find $C_{14}>0$ such that
\begin{align}\label{Lp'.10'}
    \int_\Omega n\log(n+e) \leq C_{14}
    \qquad \text{for all }t\in(0,T_{\rm max}).
\end{align}
We apply Lemma \ref{C52.ILGN} with $m=p$, $\sigma=1$, $\xi=0$, and
\[
\varepsilon_2=\frac{3(p-1)}{pC_{11}C_{14}},
\]
to obtain
\begin{align}\label{Lp'.11}
    C_{11}\int_\Omega n^{p+1}
    &\leq \varepsilon_2 C_{11}
    \left(\int_\Omega n\log(n+e)\right)
    \int_\Omega |\nabla n^{\frac p2}|^2 \notag\\
    &\quad
    +\varepsilon_2 C_{11}
    \left(\int_\Omega n\right)^p
    \left(\int_\Omega n\log(n+e)\right)
    +C_{15} \notag\\
    &\leq \frac{3(p-1)}{p}
    \int_\Omega |\nabla n^{\frac p2}|^2+C_{16},
\end{align}
    for some positive constants $C_{15}$ and $C_{16}$. Combining \eqref{Lp'.10} and \eqref{Lp'.11}, we obtain 
    \begin{align*}
          y'(t) +y(t) &\leq C_{17} \qquad \text{for all } t\in (0,T_{\rm max}),
    \end{align*}
    where $C_{17}=C_{13}+C_{16}$. Next, applying Gronwall’s inequality to the above estimate yields
\[
y(t) \leq \max\left\{ y(0), C_{17} \right\} \qquad \text{for all } t \in (0, T_{\max}).
\]
If $T_{\rm max}=\infty$, choose $t_0>0$ sufficiently large that
\eqref{L1-3}, \eqref{Lp-v-2} applied with exponent $p-q$,
\eqref{LlogL-p-2}, \eqref{LlogL-p-2'}, and the large-time estimate
in Lemma \ref{gradc} hold for all $t>t_0$, with constants independent
of the initial data. Repeating the preceding argument on $(t_0,\infty)$
then gives
\[
    y'(t)+y(t)\leq C_{18}
    \qquad\text{for all }t>t_0,
\]
where $C_{18}>0$ is independent of the initial data. This entails that
\[
y(t) \leq e^{-(t - t_0)} y(t_0) + \left(1 - e^{-(t - t_0)}\right) C_{18} \qquad \text{for all } t > t_0.
\]
Finally, we choose \( T > t_0 \) such that \( e^{-(T - t_0)} y(t_0) < 1 \), which implies \eqref{Lp-2}. The proof is now complete.
\end{proof}

\section{Global boundedness, existence of absorbing sets and mass persistence}\label{S5}
This section is devoted to first of all establish the global solvability and boundedness of solutions to the system \eqref{1}, then prove the existence of absorbing balls, finally provide the proof for the mass persistence of solutions. 

\subsection{Fractional Stokes regularity and an absorbing radius}

The lower-order estimates obtained above already bound the forcing in the
projected fluid equation in a space $L^q(\Omega)$ with $q<2$.  The
$L^q$--$L^2$ smoothing of the Stokes semigroup therefore yields fractional
regularity after every positive time.  Once the lower-order bounds become
independent of the initial data, the same calculation gives an absorbing
radius in a fractional Stokes domain.

\begin{lemma}\label{uLinf}
Let $\gamma\in\left(\frac12,1\right)$ be as in \eqref{initial'} and set
\[
    \tau:=\min\left\{1,\frac{T_{\rm max}}6\right\}.
\]
Then there exists $C_1>0$, possibly depending on the initial data, such that
\begin{align}\label{uLinf-1}
    \|A^\gamma u(\cdot,t)\|_{L^2(\Omega)}
    \leq C_1
    \qquad\text{for all }t\in(\tau,T_{\rm max}).
\end{align}
Consequently, for every $\theta\in(0,2\gamma-1)$ there exists $C_2>0$,
possibly depending on the initial data, such that
\begin{align}\label{uLinf-2}
    \|u(\cdot,t)\|_{C^{0,\theta}(\bar\Omega;\mathbb R^2)}
    \leq C_2
    \qquad\text{for all }t\in(\tau,T_{\rm max}).
\end{align}
If $T_{\rm max}=\infty$, then, for every fixed
$\theta\in(0,2\gamma-1)$, there exist constants
$R_\gamma,R_\theta>0$, independent of the initial data, such that every
solution admits an entry time $T_u>0$ for which
\begin{align}\label{uLinf-3}
    \sup_{t>T_u}
    \left(
        \|u(\cdot,t)\|_{L^2(\Omega)}
        +\|A^\gamma u(\cdot,t)\|_{L^2(\Omega)}
    \right)
    \leq R_\gamma
\end{align}
and
\begin{align}\label{uLinf-4}
    \sup_{t>T_u}
    \|u(\cdot,t)\|_{C^{0,\theta}(\bar\Omega;\mathbb R^2)}
    \leq R_\theta.
\end{align}
Thus the fluid component eventually enters a bounded subset of
$D(A^\gamma)$ whose radius is independent of the initial data.
\end{lemma}

\begin{proof}
For $1<q<\infty$, set
\[
    L^q_\sigma(\Omega)
    :=\overline{\left\{
        \varphi\in C_c^\infty(\Omega;\mathbb R^2):
        \nabla\cdot\varphi=0
    \right\}}^{\,L^q(\Omega;\mathbb R^2)}.
\]
Let $\mathcal P_q$ be the Helmholtz projection onto $L^q_\sigma(\Omega)$,
let $A_q$ be the Dirichlet Stokes operator on this space, and write
$S_q(t):=e^{-tA_q}$.  As before, $A=A_2$.

Choose
\[
    p_\gamma:=\frac{2}{1-\gamma},
    \qquad
    q_\gamma:=\frac{2}{2-\gamma},
    \qquad
    \sigma_\gamma:=\frac{1+\gamma}{2}.
\]
Then $p_\gamma>4$, $q_\gamma\in(1,2)$, and
\[
    \frac1{q_\gamma}=\frac1{p_\gamma}+\frac12,
    \qquad
    \gamma+\frac1{q_\gamma}-\frac12
    =\sigma_\gamma<1.
\]
The standard $L^{q_\gamma}$-$L^2$ estimates for the Stokes semigroup give,
for $0<r\leq1$,
\begin{align}\label{fluid-stokes-L2}
    \|S_{q_\gamma}(r)\mathcal P_{q_\gamma}g\|_{L^2(\Omega)}
    &\leq
    C r^{-(1/q_\gamma-1/2)}
    \|g\|_{L^{q_\gamma}(\Omega)},
    \\
\label{fluid-stokes-smoothing}
    \|A^\gamma S_{q_\gamma}(r)\mathcal P_{q_\gamma}g\|_{L^2(\Omega)}
    &\leq
    C r^{-\sigma_\gamma}
    \|g\|_{L^{q_\gamma}(\Omega)}.
\end{align}
To justify the second estimate explicitly, set
$h:=S_{q_\gamma}(r/2)\mathcal P_{q_\gamma}g$.  By
\eqref{fluid-stokes-L2}, $h\in L^2_\sigma(\Omega)$, and consistency of the
Stokes semigroups yields
\[
    S_{q_\gamma}(r)\mathcal P_{q_\gamma}g
    =S_2(r/2)h.
\]
Hence
\[
\begin{aligned}
    \|A^\gamma S_{q_\gamma}(r)\mathcal P_{q_\gamma}g\|_2
    &\leq C r^{-\gamma}\|h\|_2
    \\
    &\leq
    C r^{-\gamma-(1/q_\gamma-1/2)}\|g\|_{q_\gamma},
\end{aligned}
\]
which is \eqref{fluid-stokes-smoothing}; see, for instance,
\cite{Giga1986,Henry1981}.

By \eqref{Lpu-1} and \eqref{Lp-1}, used with $p=2$ and $q=\frac12$,
\[
    M_0:=\sup_{0<s<T_{\rm max}}
    \left(
        \|\nabla u(\cdot,s)\|_{L^2(\Omega)}
        +\|n(\cdot,s)\|_{L^2(\Omega)}
    \right)<\infty.
\]
Since $u=0$ on $\partial\Omega$, the Poincar\'e inequality and the
 two-dimensional Sobolev embedding imply
\begin{align}\label{fluid-u-lower}
    \|u(\cdot,s)\|_{L^2(\Omega)}
    +\|u(\cdot,s)\|_{L^{p_\gamma}(\Omega)}
    \leq C M_0
    \qquad\text{for }0<s<T_{\rm max}.
\end{align}
Let
\[
    \mathcal G:=-(u\cdot\nabla)u+n\nabla\phi+f.
\]
Because $1/q_\gamma=1/p_\gamma+1/2$ and
$L^2(\Omega)\hookrightarrow L^{q_\gamma}(\Omega)$, H\"older's inequality
and \eqref{fluid-u-lower} give
\begin{align}\label{fluid-force-bound}
    \|\mathcal G(\cdot,s)\|_{L^{q_\gamma}(\Omega)}
    &\leq
    \|u(\cdot,s)\|_{L^{p_\gamma}(\Omega)}
    \|\nabla u(\cdot,s)\|_{L^2(\Omega)}
    \notag\\
    &\quad
    +C\|\nabla\phi\|_{L^\infty(\Omega)}
        \|n(\cdot,s)\|_{L^2(\Omega)}
    +C\|f\|_{L^\infty(\Omega\times(0,\infty))}
    \notag\\
    &\leq K_0
\end{align}
for all $s\in(0,T_{\rm max})$, where $K_0>0$ is finite.

Fix $t\in(\tau,T_{\rm max})$.  The mild formula for the projected fluid
equation holds in $L^{q_\gamma}_\sigma(\Omega)$:
\[
    u(\cdot,t)
    =S_{q_\gamma}(\tau)u(\cdot,t-\tau)
     +\int_{t-\tau}^{t}
       S_{q_\gamma}(t-s)\mathcal P_{q_\gamma}
       \mathcal G(\cdot,s)\,ds.
\]
By consistency, the first term equals
$S_2(\tau)u(\cdot,t-\tau)$.  Estimate \eqref{fluid-stokes-L2} shows that
the Duhamel integral converges in $L^2_\sigma(\Omega)$, whereas
\eqref{fluid-stokes-smoothing}, together with $\sigma_\gamma<1$ and the
closedness of $A^\gamma$, shows that this integral belongs to
$D(A^\gamma)$.  Therefore
\begin{align*}
    \|A^\gamma u(\cdot,t)\|_{L^2(\Omega)}
    &\leq
    C\tau^{-\gamma}
    \|u(\cdot,t-\tau)\|_{L^2(\Omega)}
    +CK_0\int_0^\tau r^{-\sigma_\gamma}\,dr
    \\
    &\leq
    C\tau^{-\gamma}M_0
    +\frac{C\tau^{1-\sigma_\gamma}}
           {1-\sigma_\gamma}K_0.
\end{align*}
This proves \eqref{uLinf-1}.

The standard fractional-domain and Sobolev embeddings yield
\[
    D(A^\gamma)
    \hookrightarrow W^{2\gamma,2}(\Omega;\mathbb R^2)
    \hookrightarrow C^{0,\theta}(\bar\Omega;\mathbb R^2),
    \qquad 0<\theta<2\gamma-1,
\]
and hence
\[
    \|v\|_{C^{0,\theta}(\bar\Omega;\mathbb R^2)}
    \leq C_{\gamma,\theta}
    \bigl(\|v\|_{L^2(\Omega)}+\|A^\gamma v\|_{L^2(\Omega)}\bigr)
    \qquad\text{for }v\in D(A^\gamma).
\]
Together with \eqref{fluid-u-lower}, this gives \eqref{uLinf-2}.

Now assume that $T_{\rm max}=\infty$.  By \eqref{Lpu-3} and
\eqref{Lp-2}, again with $p=2$ and $q=\frac12$, there exist an entry time
$T_0>0$ and a constant $M_*>0$, independent of the initial data, such that
\[
    \|\nabla u(\cdot,s)\|_{L^2(\Omega)}
    +\|n(\cdot,s)\|_{L^2(\Omega)}
    \leq M_*
    \qquad\text{for all }s>T_0.
\]
Repeating \eqref{fluid-u-lower}--\eqref{fluid-force-bound}, we obtain
$K_*>0$, independent of the initial data, such that
\[
    \|\mathcal G(\cdot,s)\|_{L^{q_\gamma}(\Omega)}
    \leq K_*
    \qquad\text{for all }s>T_0.
\]
For $t>T_0+1$, apply the preceding estimate on $[t-1,t]$.  This gives
\[
    \|A^\gamma u(\cdot,t)\|_{L^2(\Omega)}
    \leq C M_*+\frac{C}{1-\sigma_\gamma}K_*.
\]
The Poincar\'e inequality also yields
$\|u(\cdot,t)\|_{L^2(\Omega)}\leq C M_*$ for $t>T_0$.  Thus, after
increasing the constant if necessary,
\[
    \|u(\cdot,t)\|_{L^2(\Omega)}
    +\|A^\gamma u(\cdot,t)\|_{L^2(\Omega)}
    \leq R_\gamma
    \qquad\text{for all }t>T_0+1,
\]
where $R_\gamma>0$ is independent of the initial data.  This proves
\eqref{uLinf-3} with $T_u:=T_0+1$.  The preceding fractional-domain
embedding proves \eqref{uLinf-4} with
\[
    R_\theta:=C_{\gamma,\theta}R_\gamma.
\]
\end{proof}

With the assistance of Lemma \ref{Lp} and Lemma \ref{uLinf}, we can now derive a pointwise bound for $\nabla c$.
\begin{lemma}\label{W1,inf}
   There exists  $C_1>0$ such that 
    \begin{align}\label{W1,inf-1}
        \left \| c(\cdot,t) \right \|_{W^{1,\infty}(\Omega)} \leq C_1 \quad \text{for all } t\in (2\tau , T_{\rm max}),
    \end{align}
     where $\tau = \min \left \{ 1, \frac{T_{\rm max}}{6} \right \}$. Moreover, if $T_{\rm max}= \infty$ then there exists $C_2>0$ independent of initial data such that 
      \begin{align}\label{W1,inf-2}
      \limsup_{t\to \infty}  \left \| c(\cdot,t) \right \|_{W^{1,\infty}(\Omega)} \leq C_2.
    \end{align}
\end{lemma}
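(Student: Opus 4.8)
The plan is to obtain the pointwise gradient bound for $c$ from a Duhamel representation together with the standard $L^q$--$L^\infty$ smoothing estimates for the Neumann heat semigroup $(e^{t(\Delta-\alpha)})_{t\ge 0}$ on $\Omega\subset\mathbb R^2$, feeding in the uniform $L^p$ bounds for $n$ and $\nabla c$ from Lemma~\ref{Lp}, the $L^p$ bound for $c$ from Lemma~\ref{Lp-v}, the $L^1$ bound for $c$ from Lemma~\ref{L1}, and --- the point that makes the fluid case work --- the uniform $L^\infty$ bound for $u$ furnished by the $C^{0,\theta}$ estimate of Lemma~\ref{uLinf}.

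First I would fix $t\in(2\tau,T_{\rm max})$ with $\tau=\min\{1,T_{\rm max}/6\}$ and write Duhamel's formula from time $\sigma:=t-\tau$, which satisfies $\sigma>\tau>0$, so that on $(\sigma,t)$ every estimate from Lemmas~\ref{L1}, \ref{Lp-v}, \ref{Lp}, \ref{uLinf} is in force. Using $\nabla\cdot u=0$ to rewrite the second equation of \eqref{1} as $c_t=(\Delta-\alpha)c+\beta n-u\cdot\nabla c$, this gives
\[
c(\cdot,t)=e^{\tau(\Delta-\alpha)}c(\cdot,\sigma)+\int_\sigma^t e^{(t-s)(\Delta-\alpha)}\bigl(\beta n(\cdot,s)-u(\cdot,s)\cdot\nabla c(\cdot,s)\bigr)\,ds .
\]
Applying $\nabla$ and estimating in $L^\infty(\Omega)$ term by term, with the two-dimensional bound $\|\nabla e^{\rho(\Delta-\alpha)}w\|_{L^\infty(\Omega)}\le C(1+\rho^{-\frac12-\frac1q})e^{-\alpha\rho}\|w\|_{L^q(\Omega)}$: the first term is controlled by taking $w=c(\cdot,\sigma)\in L^q(\Omega)$ (Lemma~\ref{Lp-v}) with $\rho=\tau$ fixed; the $n$-term is handled by choosing any $q>2$, so that $(t-s)^{-\frac12-\frac1q}$ is integrable on $(\sigma,t)$, together with $\sup_s\|n(\cdot,s)\|_{L^q(\Omega)}<\infty$ (Lemma~\ref{Lp}); and the transport term is bounded via $\|u(\cdot,s)\cdot\nabla c(\cdot,s)\|_{L^q(\Omega)}\le\|u(\cdot,s)\|_{L^\infty(\Omega)}\|\nabla c(\cdot,s)\|_{L^q(\Omega)}$ with the same $q>2$, invoking the $L^\infty$ bound on $u$ from Lemma~\ref{uLinf} and the $L^q$ bound on $\nabla c$ from Lemma~\ref{Lp}. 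Summing yields $\|\nabla c(\cdot,t)\|_{L^\infty(\Omega)}\le C$ uniformly for $t\in(2\tau,T_{\rm max})$, and since $\Omega$ is a bounded Lipschitz domain one has $\|c(\cdot,t)\|_{L^\infty(\Omega)}\le|\Omega|^{-1}\|c(\cdot,t)\|_{L^1(\Omega)}+C\|\nabla c(\cdot,t)\|_{L^\infty(\Omega)}$, the first summand of which is bounded by Lemma~\ref{L1}; this gives \eqref{W1,inf-1}.

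For the asymptotic statement \eqref{W1,inf-2} in the case $T_{\rm max}=\infty$, I would run the same argument but with the fixed shift $\tau=1$ and $\sigma=t-1$, this time using the eventual, initial-data-independent estimates: \eqref{Lp-v-2} for $\|c(\cdot,t-1)\|_{L^q(\Omega)}$, \eqref{Lp-2} for $\sup_s\|n(\cdot,s)\|_{L^q(\Omega)}$ and $\sup_s\|\nabla c(\cdot,s)\|_{L^q(\Omega)}$, \eqref{uLinf-4} for $\limsup_s\|u(\cdot,s)\|_{L^\infty(\Omega)}$ (via $C^{0,\theta}(\bar\Omega)\hookrightarrow L^\infty(\Omega)$), and \eqref{L1-3'} for $\|c\|_{L^1(\Omega)}$, all of which hold once $t$ exceeds an initial-data-independent threshold. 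Every term in the estimate then carries a constant independent of the initial data, and taking $\limsup_{t\to\infty}$ produces the desired $C_2$.

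The step I expect to be the main (though not severe) obstacle is the transport term $u\cdot\nabla c$: in contrast with the fluid-free setting it cannot be folded into a divergence structure enjoying a better temporal singularity, so one genuinely needs the uniform $L^\infty$ bound on $u$ from Lemma~\ref{uLinf} paired with an $L^q$ bound on $\nabla c$ for some $q$ strictly above the spatial dimension, in order that $(t-s)^{-\frac12-\frac1q}$ stay integrable --- which is exactly why this lemma must come after Lemmas~\ref{Lp} and \ref{uLinf}. The only other care needed is purely bookkeeping: keeping the starting time $\sigma$ of the Duhamel formula inside the range where the cited estimates are valid, which is what forces the restriction $t>2\tau$ and the choice $\tau=1$ in the two respective cases.
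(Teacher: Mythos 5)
Your argument is correct, and its core — a variation-of-constants representation for $c$ combined with $L^q$--$L^\infty$ smoothing of the Neumann heat semigroup, fed by the $L^p$ bounds on $n$, $\nabla c$, $c$ and the $L^\infty$ bound on $u$ from Lemma~\ref{uLinf} — is the same engine the paper uses (for \eqref{W1,inf-1} the paper simply cites the analogous semigroup computation in \cite{YM-2016}). Where you genuinely diverge is in the closure of the transport term for the asymptotic statement \eqref{W1,inf-2}: you estimate $\|u\cdot\nabla c\|_{L^q}\le\|u\|_{L^\infty}\|\nabla c\|_{L^q}$ with a fixed $q>2$ and invoke the eventual, initial-data-independent bound $\int_\Omega|\nabla c|^{2p}\le C$ from \eqref{Lp-2}, so the Duhamel estimate closes in a single pass. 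The paper instead only uses the data-independent $L^2$ gradient bound \eqref{LlogL-p-2'}, interpolates $\|\nabla c\|_{L^4}\le C\|\nabla c\|_{L^\infty}^{1/2}\|\nabla c\|_{L^2}^{1/2}$ against the unknown sup $M(T):=\sup_{t>T}\|\nabla c(\cdot,t)\|_{L^\infty}$ (finite by \eqref{W1,inf-1}), and closes via the self-consistent inequality $M\le c_3+c_4M^{1/2}$. Your route is more direct but leans on the stronger (stated) data-independent higher-integrability of $\nabla c$; the paper's route needs only the $L^2$ gradient information at the price of the bootstrap/absorption step. Two minor bookkeeping points, neither a gap: the elementary bound $\|c\|_{L^\infty}\le|\Omega|^{-1}\|c\|_{L^1}+C\|\nabla c\|_{L^\infty}$ you use for the zero-order part is valid because $\Omega$ is a connected bounded smooth domain (alternatively the same Duhamel formula without the gradient gives it), and your placement of the Duhamel starting time $\sigma=t-\tau>\tau$ is exactly what keeps all cited estimates (in particular \eqref{uLinf-2}) in force, matching the paper's restriction $t>2\tau$.
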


\begin{proof}
    Since the proof of \eqref{W1,inf-1}  proceeds along the same lines as that of Lemma 3.12 in \cite{YM-2016}, we omit the details here to avoid redundancy. In case \( T_{\max} = \infty \), we define
\[
M(T) := \sup_{t > T} \left\| \nabla c(\cdot, t) \right\|_{L^\infty(\Omega)} \quad \text{for all } T > 0.
\]
It follows from \eqref{W1,inf-1} that $M(T)<\infty$ for all $T>0$.
For $t>t_0$, the variation-of-constants formula gives
\[
\begin{aligned}
\nabla c(\cdot,t)
&=\nabla e^{(t-t_0)(\Delta-\alpha)}c(\cdot,t_0)\\
&\quad
+\beta\int_{t_0}^t
\nabla e^{(t-s)(\Delta-\alpha)}n(\cdot,s)\,ds\\
&\quad
-\int_{t_0}^t
\nabla e^{(t-s)(\Delta-\alpha)}
\bigl(u(\cdot,s)\cdot\nabla c(\cdot,s)\bigr)\,ds .
\end{aligned}
\]
Consequently,
\begin{align}\label{W1,inf.1}
\|\nabla c(\cdot,t)\|_{L^\infty(\Omega)}
&\leq
C_1\bigl(1+(t-t_0)^{-1}\bigr)e^{-\alpha(t-t_0)}
\|c(\cdot,t_0)\|_{L^2(\Omega)}\notag\\
&\quad
+C_1\beta\int_{t_0}^t
\bigl(1+(t-s)^{-\frac34}\bigr)e^{-\alpha(t-s)}
\|n(\cdot,s)\|_{L^4(\Omega)}\,ds\notag\\
&\quad
+C_1\int_{t_0}^t
\bigl(1+(t-s)^{-\frac34}\bigr)e^{-\alpha(t-s)}
\|u(\cdot,s)\cdot\nabla c(\cdot,s)\|_{L^4(\Omega)}\,ds
\end{align}
for all $t>t_0+1$. From Lemmas \ref{Lp-v}, \ref{LlogL-p},
\ref{Lp}, and \ref{uLinf}, we can find $T>0$ such that

\begin{align}\label{W1,inf.2}
    \sup_{t>T} \left \{ \left \| c(\cdot,t) \right \|_{L^2(\Omega)} + \left \| \nabla c(\cdot,t) \right \|_{L^2(\Omega)}+ \left \| n(\cdot,t) \right \|_{L^4(\Omega)} + \left \|u(\cdot,t) \right \|_{L^\infty(\Omega)} \right \} \leq C_2,
\end{align}
where $C_2>0$ is independent of initial data. We also notice that 
\begin{align}\label{W1,inf.3}
    \left \|u(\cdot,s) \cdot \nabla c(\cdot,s) \right \|_{L^4(\Omega)} &\leq \left \| u(\cdot,s) \right \|_{L^\infty(\Omega)} \left \| \nabla c(\cdot,s) \right \|_{L^4(\Omega)} \notag \\
    &\leq C_2 \left \| \nabla c(\cdot,s) \right \|^\frac{1}{2}_{L^\infty(\Omega)}\left \| \nabla c(\cdot,s) \right \|^\frac{1}{2}_{L^2(\Omega)} \notag \\
    &\leq C_2^{\frac{3}{2}} M^\frac{1}{2}(T) \qquad \text{for all }s>T.
\end{align}
Choosing $t_0=T$ and using \eqref{W1,inf.2} and \eqref{W1,inf.3}, we obtain
\[
    \|\nabla c(\cdot,t)\|_{L^\infty(\Omega)}
    \leq C_3+C_4M(T)^{\frac12}
    \qquad\text{for all }t>T+1,
\]
where $C_3,C_4>0$ are independent of the initial data. Consequently,
\[
    M(T+1)\leq C_3+C_4M(T)^{\frac12}.
\]
 Since $\lim_{T \to \infty} M(T) = M < \infty$ by \eqref{W1,inf-1}, letting $T \to \infty$ in the inequality gives
\[
M \leq C_3 + C_4 M^{1/2}.
\]
A standard argument then yields $M \leq \max\{2C_3,\,(2C_4)^2\}$, which is exactly \eqref{W1,inf-2}. This completes the proof.
\end{proof}

With the aid of Lemmas~\ref{Lp} and~\ref{uLinf}, we now establish a \( C^{0,\lambda} \) bound for \( n \), for some \( \lambda \in (0,1) \), by exploiting the regularizing properties of the Neumann heat semigroup.

\begin{lemma} \label{c-l}
    There exist $\lambda \in (0,1)$ and $C_1>0$ such that 
    \begin{align} \label{c-l-1}
        \left \| n(\cdot,t) \right \|_{C^{0,\lambda}(\bar{\Omega})} \leq C_1 \qquad \text{for all } t\in (3\tau , T_{\rm max}),
    \end{align}
    where $\tau = \min \left \{ 1, \frac{T_{\rm max}}{6} \right \}$. Moreover, if $T_{\rm max}= \infty$ then there exists $C_2>0$  independent of initial data such that 
     \begin{align} \label{c-l-2}
       \limsup_{t \to \infty} \left \| n(\cdot,t) \right \|_{C^{0,\lambda}(\bar{\Omega})} \leq C_2 .
    \end{align}
    
\end{lemma}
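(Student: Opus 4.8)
The plan is to represent $n$ through the variation-of-constants formula for the Neumann heat semigroup and to estimate each contribution in a H\"older norm, in the same spirit as the proofs of Lemmas~\ref{uLinf} and~\ref{W1,inf}. For $t\in(3\tau,T_{\rm max})$ the bound \eqref{c-l-1} is obtained along the very same lines as in \cite{YM-2016}, so I shall only sketch the mechanism and then concentrate on the $\limsup$ assertion \eqref{c-l-2}. Since $\nabla\cdot u=0$, the first equation of \eqref{1} can be rewritten as
\[
n_t=(\Delta-1)n-\nabla\cdot\Big(un+\chi\tfrac{n}{c^k}\nabla c\Big)+(r+1)n-\frac{\mu n^2}{\log^\eta(n+e)},
\]
and, fixing $t_0:=2\tau$, one has for $t\in(3\tau,T_{\rm max})$
\begin{align*}
n(\cdot,t)&=e^{(t-t_0)(\Delta-1)}n(\cdot,t_0)-\int_{t_0}^t e^{(t-s)(\Delta-1)}\nabla\cdot\Big(un+\chi\tfrac{n}{c^k}\nabla c\Big)(\cdot,s)\,ds\\
&\quad+\int_{t_0}^t e^{(t-s)(\Delta-1)}\Big((r+1)n-\tfrac{\mu n^2}{\log^\eta(n+e)}\Big)(\cdot,s)\,ds.
\end{align*}

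The inputs are the bounds already at our disposal. By Lemma~\ref{Lp}, $n$ is bounded in $L^q(\Omega)$ for every $q\ge1$, and in addition $\int_\Omega n^pc^{-q}$ is bounded for all $p>1$ and $0<q<p-1$; by Lemma~\ref{uLinf}, $u$ is bounded in $L^\infty(\Omega)$; and by Lemma~\ref{W1,inf}, $\nabla c$ is bounded in $L^\infty(\Omega)$ — each uniformly for $t$ away from $0$, and with a constant independent of the initial data once $t$ is large. The point that tames the singularity of $c$ at the origin is that, taking $p=\rho$ and $q=k\rho$ in the mixed bound, the admissibility condition $0<k\rho<\rho-1$ holds for every $\rho>\tfrac{1}{1-k}$ because $k\in(0,1)$; hence $nc^{-k}$ is bounded in $L^\rho(\Omega)$ for every such $\rho$, and consequently
\[
\Big\|un+\chi\tfrac{n}{c^k}\nabla c\Big\|_{L^q(\Omega)}\le\|u\|_{L^\infty(\Omega)}\|n\|_{L^q(\Omega)}+\chi\|\nabla c\|_{L^\infty(\Omega)}\|nc^{-k}\|_{L^q(\Omega)}
\]
is bounded for every sufficiently large $q$, as is the zeroth-order term $(r+1)n-\mu n^2\log^{-\eta}(n+e)$ (using $n^2\log^{-\eta}(n+e)\le n^2$).

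With $q$ fixed large, I would choose $\lambda\in(0,1)$ with $\lambda<1-\tfrac{2}{q}$ and use the standard $L^q\!\to\!C^{0,\lambda}$ smoothing properties of the Neumann heat semigroup: there is $\beta\in(0,\tfrac{1}{2})$, namely any $\beta$ slightly larger than $\tfrac{1}{2}(\lambda+\tfrac{2}{q})$, such that $\|e^{\sigma(\Delta-1)}w\|_{C^{0,\lambda}(\bar\Omega)}\le C\sigma^{-\beta}e^{-\sigma}\|w\|_{L^q(\Omega)}$ and $\|e^{\sigma(\Delta-1)}\nabla\cdot w\|_{C^{0,\lambda}(\bar\Omega)}\le C\sigma^{-\beta-\frac{1}{2}}e^{-\sigma}\|w\|_{L^q(\Omega)}$ for $\sigma>0$. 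Since $\beta+\tfrac{1}{2}<1$, both time singularities are integrable over $(t_0,t)$, and applying these to the three terms above yields \eqref{c-l-1} (the term carrying $n(\cdot,t_0)$ being finite by Lemma~\ref{Lp}). For \eqref{c-l-2}, in the case $T_{\rm max}=\infty$ I would instead take $t_0$ so large that the eventual, initial-data-independent forms of the bounds from Lemmas~\ref{Lp}, \ref{uLinf} and \ref{W1,inf} hold on $(t_0,\infty)$ — in particular $\|n(\cdot,t_0)\|_{L^q(\Omega)}\le C_2$ — and then let $t\to\infty$: the term involving $n(\cdot,t_0)$ decays like $e^{-(t-t_0)}$, while the two integrals are bounded by a constant depending only on $q$, $\lambda$, $\Omega$ and on those eventual bounds, hence independent of the initial data.

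The main obstacle is the weakly singular chemotactic flux $\chi\,\tfrac{n}{c^k}\nabla c$: since no positive pointwise lower bound for $c$ is known once the fluid is coupled in, only an $L^\infty$ bound on $\nabla c$ is available, and controlling this flux in a high Lebesgue norm must be done through the interplay, noted above, between the mixed estimate $\int_\Omega n^pc^{-q}$ of Lemma~\ref{Lp} and the constraint $k<1$, which forces $nc^{-k}$ into high-integrability spaces. A lesser point is that the semigroup estimate for the divergence term carries the stronger singularity $\sigma^{-\beta-\frac{1}{2}}$, which compels $\lambda+\tfrac{2}{q}<1$; this causes no trouble since only some $\lambda\in(0,1)$ is sought and $q$ may be taken arbitrarily large.
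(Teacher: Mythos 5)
Your proposal is correct and follows essentially the same route as the paper: a variation-of-constants representation for $n$, semigroup smoothing into $C^{0,\lambda}(\bar{\Omega})$, and — the decisive point — control of the singular flux $\chi\, n c^{-k}\nabla c$ in a high Lebesgue norm via the mixed bound $\int_\Omega n^p c^{-kp}$ from Lemma~\ref{Lp} (admissible for $p>\frac{1}{1-k}$ since $k<1$) combined with the $L^\infty$ bounds on $\nabla c$ and $u$ from Lemmas~\ref{W1,inf} and~\ref{uLinf}. The only cosmetic difference is that you invoke $L^q\to C^{0,\lambda}$ smoothing estimates for the Neumann heat semigroup directly, whereas the paper obtains the same effect through fractional powers $B^\beta$ of $B=-\Delta+1$ and the embedding $D(B^\beta)\hookrightarrow C^{0,\lambda}(\bar{\Omega})$.
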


\begin{proof}
{First of all, if $p>\max\left\{2,\frac{1}{1-k}\right\}$, then there exists $C_1>0$ such that}    
    \begin{align} \label{c-l.1}
        \int_\Omega \frac{n^p (\cdot,t) |\nabla c(\cdot,t)|^p}{c^{pk}(\cdot,t)} \leq C_1 \quad \text{for all }t\in (2\tau,T_{\rm max}).
    \end{align}
{
Fix $\lambda\in(0,1)$ and choose $\sigma\in\left(\frac{\lambda}{2},\frac12\right)$. 
We now take
\[
p>\max\left\{2,\frac{1}{1-k},\frac{2}{2\sigma-\lambda}\right\}.
\]
Let $B$ denote the realization of $-\Delta+1$ in $L^p(\Omega)$ under homogeneous 
Neumann boundary conditions. Since $2\sigma-\frac{2}{p}>\lambda$, we have
\[
D(B^\sigma)\hookrightarrow C^{0,\lambda}(\bar\Omega).
\]
For $t\in(3\tau,T_{\max})$, the variation-of-constants formula on $[t-\tau,t]$ and the standard smoothing estimates give
\[
\begin{aligned}
\|n(\cdot,t)\|_{C^{0,\lambda}(\bar\Omega)}
&\le C\tau^{-\sigma}\|n(\cdot,t-\tau)\|_{L^p(\Omega)} \\
&\quad
+C\int_{t-\tau}^{t}(t-s)^{-\sigma-\frac12}
\left(
\left\|\chi n(\cdot,s)\frac{\nabla c(\cdot,s)}{c^k(\cdot,s)}\right\|_{L^p(\Omega)}
+\|n(\cdot,s)u(\cdot,s)\|_{L^p(\Omega)}
\right)\,ds \\
&\quad
+C\int_{t-\tau}^{t}(t-s)^{-\sigma}
\left(
\|n(\cdot,s)\|_{L^p(\Omega)}
+\|n(\cdot,s)\|_{L^{2p}(\Omega)}^2
\right)\,ds .
\end{aligned}
\]
Since $t-\tau>2\tau$ and $\sigma<\frac12$, \eqref{c-l.1}, Lemma \ref{Lp}, and Lemma \ref{uLinf} imply \eqref{c-l-1}. 
Now assume that $T_{\max}=\infty$.}
{
For $t>1$, the variation-of-constants formula, together with $\nabla\cdot u=0$, gives
\begin{align} \label{c-l.2}
    \left \| n(\cdot,t) \right \|_{C^{0,\lambda}(\bar{\Omega})} 
    &\leq C_2 \left \| B^\sigma n(\cdot,t) \right \|_{L^p(\Omega)} \notag \\
    &\leq C_2 \left \| B^\sigma e^{\Delta} n(\cdot,t-1) \right \|_{L^p(\Omega)} \notag \\
    &\quad + C_2 \int_{t-1}^t 
    \left \| B^\sigma e^{(t-s)\Delta} \nabla \cdot \left (
    \chi n(\cdot,s) \frac{\nabla c(\cdot,s)}{c^k(\cdot,s)}
    +n(\cdot,s)u(\cdot,s) \right ) \right \|_{L^p(\Omega)}\, ds \notag \\
    &\quad+C_2 \int_{t-1}^t 
    \left \| B^\sigma e^{(t-s)\Delta} \left \{ 
    rn(\cdot,s)-\frac{\mu n^2(\cdot,s)}{\log^\eta(n(\cdot,s)+e)}
    \right \} \right \|_{L^p(\Omega)}\, ds .
\end{align}}
    Invoking standard regularization properties of the analytic semigroup $(e^{-\sigma B})_{\sigma \geq 0}$ \cite{Friedman1969} and the estimate \eqref{Lp-2} in Lemma \ref{Lp} implies the existence of $T_1>0$ such that
    \begin{align} \label{c-l.3}
        C_2 \left \| B^\sigma e^{\Delta} n(\cdot,t-1) \right \|_{L^p(\Omega)} &= C_2 e  \left \| B^\sigma e^{-B} n(\cdot,t-1) \right \|_{L^p(\Omega)}  \notag \\
        &\leq C_3 \left \| n(\cdot,t-1) \right \|_{L^p(\Omega)} \leq C_4 \quad \text{for all }t>T_1,
    \end{align}
    where $C_3$ and $C_4$ are positive constants independent of initial data. We now make use of the classical smoothing properties of the Neumann heat semigroup $\left( e^{t\Delta} \right)_{t \geq 0}$ in $\Omega$ (cf.~\cite[Lemma~1.3]{Winkler_2010}) to deduce that
   \begin{align}\label{c-l.4}
    &C_2 \int_{t-1}^t \left\| B^\sigma e^{(t-s)\Delta} \nabla \cdot \left( \chi
    n(\cdot,s) \frac{\nabla c(\cdot,s)}{c^k(\cdot,s)} + n(\cdot,s) u(\cdot,s) 
    \right) \right\|_{L^p(\Omega)} \, ds \notag \\
    &= C_2 \int_{t-1}^t e^{\frac{t-s}{2}} 
    \left\| B^\sigma e^{-\frac{t-s}{2}B} e^{\frac{t-s}{2}\Delta} 
    \nabla \cdot \left(  \chi
    n(\cdot,s) \frac{\nabla c(\cdot,s)}{c^k(\cdot,s)} + n(\cdot,s) u(\cdot,s) 
    \right) \right\|_{L^p(\Omega)} \, ds \notag \\
    &\leq C_5 \int_{t-1}^t e^{\frac{t-s}{2}} \cdot \left ( \frac{t-s}{2} \right )^{-\sigma } \left \| e^{\frac{t-s}{2} \Delta}  \nabla \cdot \left( \chi
    n(\cdot,s) \frac{\nabla c(\cdot,s)}{c^k(\cdot,s)} + n(\cdot,s) u(\cdot,s) 
    \right)  \right \|_{L^p(\Omega)}\, ds \notag \\
    &\leq C_6 \int_{t-1}^t  e^{\frac{t-s}{2}} \left ( \frac{t-s}{2} \right )^{-\sigma -\frac{1}{2}} \left \{ \chi \left \| n(\cdot,s) \frac{\nabla c(\cdot,s)}{c^k(\cdot,s)} \right \|_{L^p(\Omega)}+ \left \| n (\cdot,s) u(\cdot,s) \right \|_{L^p(\Omega)} \right \} \, ds,
\end{align}
where $C_5>0$ and $C_6>0$. 
Using \eqref{W1,inf-2} in Lemma \ref{W1,inf} and \eqref{Lp-2} in Lemma \ref{Lp}, 
and recalling that $pk<p-1$, there exists $T_2>0$ such that
\begin{align}\label{c-l.5}
    \left \| \chi n(\cdot,s) \frac{\nabla c(\cdot,s)}{c^k(\cdot,s)} \right \|_{L^p(\Omega)}
    &\leq \chi \left \| \nabla c(\cdot,s) \right \|_{L^\infty(\Omega)}
    \left \| \frac{ n(\cdot,s)}{c^k(\cdot,s)} \right \|_{L^p(\Omega)}
    \leq C_7
\end{align}
for all $s>T_2$, where $C_7>0$ is independent of the initial data. 
Using \eqref{Lp-2} and \eqref{uLinf-4}, there exists $T_3>0$ such that
\begin{align}\label{c-l.6}
     \left \| n(\cdot,s) u(\cdot,s) \right \|_{L^p(\Omega)}
     \leq \left \| u(\cdot,s) \right \|_{L^\infty(\Omega)}
     \left \| n(\cdot,s)\right \|_{L^p(\Omega)}
     \leq C_8
\end{align}
for all $s>T_3$, where $C_8>0$ is independent of the initial data.
Collecting \eqref{c-l.4}, \eqref{c-l.5}, and \eqref{c-l.6}, we obtain 
\begin{align}\label{c-l.7}
     C_2 \int_{t-1}^t &\left\| B^\sigma e^{(t-s)\Delta} \nabla \cdot \left( 
    \chi n(\cdot,s) \frac{\nabla c(\cdot,s)}{c^k(\cdot,s)} + n(\cdot,s) u(\cdot,s) 
    \right) \right\|_{L^p(\Omega)} \, ds \notag \\
    &\leq C_6(C_7+C_8)e^{\frac{1}{2}} 2^{\sigma +\frac{1}{2}} 
    \int_{t-1}^t (t-s)^{-\sigma -\frac{1}{2}} \, ds \notag \\
    &\leq C_6(C_7+C_8)e^{\frac{1}{2}} 2^{\sigma +\frac{1}{2}} 
    \cdot \frac{1}{\frac{1}{2}-\sigma } := C_9
    \quad \text{for all }t>1+\max \left \{ T_2,T_3 \right \}.
\end{align} Proceeding analogously with the assistance of \eqref{Lp-2}, applied with exponents 
$p$ and $2p$, gives $T_4>0$ such that
\begin{align}\label{c-l.8}
    C_2 \int_{t-1}^t 
    &\left \| B^\sigma e^{(t-s)\Delta} \left \{ 
    rn(\cdot,s)-\frac{\mu n^2(\cdot,s)}{\log^\eta(n(\cdot,s)+e)}
    \right \} \right \|_{L^p(\Omega)}\,ds \notag \\
    &\leq C_{10} \int_{t-1}^t (t-s)^{-\sigma}
    \left(
    r\left \| n(\cdot,s) \right \|_{L^p(\Omega)}
    +\mu \left \| n(\cdot,s) \right \|^2_{L^{2p}(\Omega)}
    \right)\, ds \notag \\
    &\leq C_{11} \int_{t-1}^t (t-s)^{-\sigma}\, ds
    \leq \frac{C_{11}}{1-\sigma}
    \qquad \text{for all } t>1+T_4 .
\end{align}
{
Collecting \eqref{c-l.2}, \eqref{c-l.3}, \eqref{c-l.7}, and \eqref{c-l.8}, we obtain
\begin{align*}
    \left \| n(\cdot,t) \right \|_{C^{0,\lambda} (\bar{\Omega})} 
    \leq C_{12} \qquad \text{for all }t> T,
\end{align*}
where
\[
C_{12}:=C_4+C_9+\frac{C_{11}}{1-\sigma}
\]
is independent of the initial data and
\[
T:=\max\left\{T_1,\,1+T_2,\,1+T_3,\,1+T_4\right\}.
\]
Sending $t\to\infty$ proves \eqref{c-l-2}.}
\end{proof}

\subsection{Proof of the main results}
We are now ready to prove the global existence and boundedness of solutions to the system \eqref{1}.
\begin{proof}[Proof of Theorem \ref{thm1}]
   Theorem \ref{thm1} is an immediate consequence of \eqref{c-l-1} in Lemma \ref{c-l}, \eqref{W1,inf-1} in Lemma \ref{W1,inf}, \eqref{uLinf-1} in Lemma \ref{uLinf}, and the extensibility property of solutions \eqref{ext} in Lemma \ref{local}.
\end{proof}
Next, the existence of an absorbing set with respect to topology in $C^0(\bar{\Omega})\times W^{1,\infty}(\Omega) \times C^{0, \theta}(\Omega)$ for some $\theta \in (0,1)$ is established as follows:
\begin{proof}[Proof of Theorem \ref{thm2}]
    By collecting the estimates \eqref{c-l-2}, \eqref{W1,inf-2}, and \eqref{uLinf-4}, we prove the desired result.
\end{proof}
Finally, we prove the last main result.

\begin{proof}[Proof of Theorem \ref{thm3}]
Set
\[
    m(t):=\frac{1}{|\Omega|}\int_\Omega n(x,t)\,dx .
\]
Integrating the first equation of \eqref{1} over $\Omega$, and using
$\nabla\cdot u=0$ together with the boundary conditions, we obtain
\begin{equation}\label{mass.1}
    m'(t)
    =
    rm(t)
    -\frac{\mu}{|\Omega|}
    \int_\Omega\frac{n^2}{\log^\eta(n+e)}
    \qquad\text{for all }t>0.
\end{equation}

We first derive an upper estimate. Let
\[
    \psi(s):=\log^\eta(s+e),\qquad s\geq0.
\]
A direct calculation gives
\[
    \psi''(s)
    =
    \frac{\eta\log^{\eta-2}(s+e)}
         {(s+e)^2}
    \bigl(\eta-1-\log(s+e)\bigr)<0,
\]
and hence $\psi$ is concave. By the Cauchy--Schwarz and Jensen
inequalities,
\[
\begin{aligned}
    \left(\int_\Omega n\right)^2
    &\leq
    \left(\int_\Omega\frac{n^2}{\psi(n)}\right)
    \left(\int_\Omega\psi(n)\right)\\
    &\leq
    |\Omega|\psi(m(t))
    \int_\Omega\frac{n^2}{\psi(n)} .
\end{aligned}
\]
Consequently,
\[
    \frac{1}{|\Omega|}
    \int_\Omega\frac{n^2}{\log^\eta(n+e)}
    \geq
    \frac{m^2(t)}{\log^\eta(m(t)+e)}.
\]
Therefore,
\begin{equation}\label{mass.2}
    m'(t)
    \leq
    rm(t)
    -\frac{\mu m^2(t)}{\log^\eta(m(t)+e)}.
\end{equation}

Moreover,
\[
    \frac{d}{ds}
    \left(\frac{s}{\log^\eta(s+e)}\right)
    =
    \frac{1}{\log^\eta(s+e)}
    \left(
        1-\frac{\eta s}{(s+e)\log(s+e)}
    \right)>0.
\]
Since
\[
    \frac{s}{\log^\eta(s+e)}\longrightarrow\infty
    \qquad\text{as }s\to\infty,
\]
there exists a unique $\overline m>0$ such that
\[
    \frac{\mu\overline m}{\log^\eta(\overline m+e)}=r.
\]

Let $y$ solve
\[
    y'
    =
    ry-\frac{\mu y^2}{\log^\eta(y+e)},
    \qquad y(0)=m(0).
\]
By scalar comparison, \eqref{mass.2} implies that
\[
    m(t)\leq y(t)\qquad\text{for all }t\geq0.
\]
A phase-line argument gives
\[
    y(t)\leq\max\{m(0),\overline m\}
    \qquad\text{for all }t\geq0,
\]
and
\[
    y(t)\longrightarrow\overline m
    \qquad\text{as }t\to\infty.
\]
Thus
\begin{equation}\label{mass.3}
    m(t)\leq\max\{m(0),\overline m\}
    \qquad\text{for all }t\geq0,
\end{equation}
and
\begin{equation}\label{mass.4}
    \limsup_{t\to\infty}m(t)\leq\overline m.
\end{equation}

We next derive a lower estimate. Let $\lambda\in(0,1)$ and $C_2>0$ be as
in \eqref{c-l-2}, and set
\[
    K:=C_2+1.
\]
It follows from \eqref{c-l-2} that there exists $T>0$, possibly depending
on the initial data, such that
\begin{equation}\label{mass.5}
    \|n(\cdot,t)\|_{C^{0,\lambda}(\bar\Omega)}
    \leq K
    \qquad\text{for all }t\geq T.
\end{equation}

Set
\[
    \theta:=\frac{\lambda}{2+\lambda}\in(0,1).
\]
The Hölder interpolation inequality gives a constant $C_I>0$, depending
only on $\Omega$ and $\lambda$, such that
\[
    \|v\|_{L^\infty(\Omega)}
    \leq
    C_I
    \|v\|_{C^{0,\lambda}(\bar\Omega)}^{1-\theta}
    \|v\|_{L^1(\Omega)}^\theta
\]
for every nonnegative $v\in C^{0,\lambda}(\bar\Omega)$. Hence
\begin{equation}\label{mass.6}
    \|n(\cdot,t)\|_{L^\infty(\Omega)}
    \leq A m^\theta(t)
    \qquad\text{for all }t\geq T,
\end{equation}
where
\[
    A:=C_IK^{1-\theta}|\Omega|^\theta.
\]
In particular, $A$ is independent of the initial data.

Since $\log^\eta(n+e)\geq1$, it follows from \eqref{mass.6} that
\[
\begin{aligned}
    \frac{1}{|\Omega|}
    \int_\Omega\frac{n^2}{\log^\eta(n+e)}
    &\leq
    \frac{1}{|\Omega|}\int_\Omega n^2\\
    &\leq
    \|n(\cdot,t)\|_{L^\infty(\Omega)}m(t)\\
    &\leq
    A m^{1+\theta}(t)
\end{aligned}
\]
for all $t\geq T$. Therefore,
\begin{equation}\label{mass.7}
    m'(t)
    \geq
    rm(t)-\mu A m^{1+\theta}(t)
    \qquad\text{for all }t\geq T.
\end{equation}

Define
\[
    \underline m
    :=
    \left(\frac{r}{\mu A}\right)^{1/\theta}>0,
\]
and let $z$ solve
\[
    z'=rz-\mu A z^{1+\theta},
    \qquad z(T)=m(T).
\]
Since $m$ is a supersolution of this scalar equation, comparison yields
\[
    m(t)\geq z(t)
    \qquad\text{for all }t\geq T.
\]
Moreover,
\[
    z(t)\geq\min\{m(T),\underline m\}
    \qquad\text{for all }t\geq T,
\]
and
\[
    z(t)\longrightarrow\underline m
    \qquad\text{as }t\to\infty.
\]
Consequently,
\begin{equation}\label{mass.8}
    \liminf_{t\to\infty}m(t)\geq\underline m.
\end{equation}

Finally, $m$ is continuous and strictly positive on $[0,T]$, because
$n_0\not\equiv0$ and $n(\cdot,t)>0$ for all $t>0$. Hence
\[
    M_*
    :=
    |\Omega|
    \min\left\{
        \min_{0\leq s\leq T}m(s),\,
        \underline m
    \right\}>0.
\]
Taking
\[
    \underline M:=|\Omega|\underline m,
    \qquad
    \overline M:=|\Omega|\overline m,
\]
and combining \eqref{mass.3}, \eqref{mass.4}, and \eqref{mass.8}
completes the proof.
\end{proof}

\section*{Declarations}
\paragraph{Conflict of Interest} The authors declare that they have no conflict of interest.
\paragraph{Acknowledgments} Minh Le was supported by the Hangzhou Postdoctoral Research Grant.

     \paragraph{Data Availability}
 Data sharing not applicable to this article as no datasets were generated or analyzed during
the current study.

\end{document}